\newtheorem{theorem}{Theorem}[section]
\newtheorem{proposition}{Proposition}[section]
\newtheorem{lemma}{Lemma}[section]
\newtheorem{remark}{Remark}[section]
\newtheorem*{maintheorem*}{Main Theorem}
\numberwithin{equation}{section}
\newcommand{\N}{\mathbb{N}}
\newcommand{\R}{\mathbb{R}}
\newcommand{\pt}{\partial_t}
\newcommand{\ptt}{\partial_{t}^2}
\newcommand{\eps}{\varepsilon}
\DeclareMathOperator*{\supp}{supp}
\newcommand{\dd}{\,\mathrm{d}}
\renewcommand{\S}{\mathbb S}
\begin{document}

\title[Stationary critical points of the fractional heat flow]{The stationary critical points of the fractional heat flow}

\subjclass[2020]{35R11, 35S15, 47G20, 35A01, 35B05, 35K05, 35K15.}
\keywords{Fractional Laplacian, fractional heat equation, fractional heat kernel, spatial critical points, hot spots. \vspace{1.5mm}}

\author[N. De Nitti]{Nicola De Nitti}
\address[N. De Nitti]{Friedrich-Alexander-Universit\"{a}t Erlangen-N\"{u}rnberg, Department of Data Science, Chair for Dynamics, Control and Numerics (Alexander von Humboldt Professorship), Cauerstr. 11, 91058 Erlangen, Germany.}
\email{nicola.de.nitti@fau.de}
\author[S. Sakaguchi]{Shigeru Sakaguchi}
\address[S. Sakaguchi]{Graduate School of Information Sciences, Tohoku University, Sendai 980-8579, Japan.}
\email{sigersak@tohoku.ac.jp}


\begin{abstract}
We study the spatial critical points of the solutions $u=u(x,t)$ of the fractional heat equation. For the Cauchy problem, we show that the origin $0$ satisfies $\nabla_x u(0,t) = 0$ for $t>0$ if and only if the initial data satisfy a balance law of the form  $\int_{\mathbb{S}^{N-1}} \omega u_0(r\omega) \dd \omega=0$ for a. e. $r \ge 0$. Moreover, for the Dirichlet initial-boundary value problem, we prove two symmetry results: $\Omega$ is a ball centered at the origin if and only if $\nabla_x u(0,t) = 0$ for $t>0$ provided that the initial data satisfies the above mentioned balance law; $\Omega$ is centrosymmetric if and only if $\nabla_x u(0,t) = 0$ for $t>0$ provided that the initial data is centrosymmetric. These results extend some theorems obtained by Magnanini and Sakaguchi in 1997-1999 for the (local) heat equation to the fractional context. These extensions are nontrivial, because of the nonlocal nature of the fractional Laplacian. Among others, the proof of the characterization of a ball in the Dirichlet initial-boundary value problem for the fractional heat flow not only works for the classical heat flow but also gives a new insight into the problem. 
\end{abstract}

\maketitle


\section{Introduction}
\label{sec:intro}

We study the spatial critical points of solutions of the fractional heat equation  
\begin{align}\label{eq:fhe}
\begin{cases}
\pt u(x,t) + (-\Delta)^s u(x,t) = 0, & x \in \R^N, \ t >0, \\ 
u(x,0) = u_0(x), & x \in \R^N,
\end{cases}
\end{align}
where $u_0 \in L^\infty(\R^N)$,  $0 < s < 1$, and the operator $(-\Delta)^s$ is the \emph{fractional Laplacian} (see  \cite{BuVa2016,MR3967804} for further information), that is, 
\begin{equation}
\label{frac-lap-sg-int}
(-\Delta)^s u(x) := c_{N,s} \lim_{\varepsilon\to 0^+}\int_{|y-x|>\varepsilon} \frac{u(x) - u(y)}{|x-y|^{N+2s}} \dd  y,
\end{equation}
where $c_{N,s}$ is a constant chosen so that the Fourier representation 
\[ (-\Delta)^s u = \mathcal F^{-1} (|\xi|^{2s} \mathcal F u) \]
holds in $\R^N$,  namely, 
\begin{align}\label{eq:constant}
c_{N,s} :=   \frac{2^{2s}s \Gamma(\frac{N+2s}{2})}{\pi^{N/2}  \Gamma (1-s)}. 
\end{align} 

For the classical heat equation, corresponding to the case $s = 1$ of \eqref{eq:fhe}, the study of spatial critical points goes back to \cite{MR1094719}, where hot spots (i.e. critical points where the solutions attains its spatial maximum) have been considered: the author studied the location of the hot spots of the non-negative solution of the Cauchy problem and their asymptotic behavior as time goes to infinity. We refer to \cite{MR1272155} for the initial-boundary value problem on unbounded domains in $\R^N$. 

On this topic, a conjecture formulated by Klamkin (and modified by Kawohl)   states that the hot spots of the initial-boundary value problem on bounded convex domains in $\R^N$ do not move in time for positive constant initial data under the homogeneous Dirichlet boundary condition, then the convex domain must have some sort of symmetry. Some progress on this conjecture has been made in  \cite{klamkin}. This problem was then studied further  in \cite{MR1454253},  where the authors proved that a solution $u$ has a spatial
critical point not moving along the heat flow if and only if $u$ satisfies some balance law (which amounts to a symmetry condition with respect to the critical point). Moreover, they provided a characterization of balls by making use of the spatial critical points not moving along the heat flow. A similar characterization was given in \cite{MR1686571,MR1718642} for centrosymmetric domains. In a similar spirit, in \cite{MR1979777}, planar domains which are invariant by some rotation have been studied.  In \cite{MR1810137}, the stationary critical points of the heat flow have also been studied in the sphere and hyperbolic spaces. We also refer to \cite{MR1929895} for a survey of this research area. 

In the present paper, we extend some results of \cite{MR1454253,MR1686571} to the fractional context. These extensions are nontrivial, because of the nonlocal nature of the fractional Laplacian. First, we consider the Cauchy problem \eqref{eq:fhe}  and prove that the origin is a critical point of $u$ which does not move along the fractional heat flow if and only if the initial datum satisfies a balance law of the form 
\begin{align}\label{eq:balance-law}
\int_{\mathbb{S}^{N-1}} \omega u_0(r\omega) \dd \omega = 0 \ \text{ for any $r \ge 0$,}
\end{align}
where $\mathbb{S}^{N-1}$ denotes the unit sphere centered at the origin in $\mathbb R^N$ with the volume element $\dd\omega$, is satisfied (see \cite[Theorem 1]{MR1454253} for $s=1$). For the special case $s=1/2$, where the fractional heat kernel can be computed explicitly (see Section \ref{sec:preliminaries}), such result was recently obtained in \cite[Theorem 3.7]{MR4036719}.

Next, we consider the initial-boundary value problem with homogeneous Dirichlet data 
\begin{align}\label{eq:fhe-bc}
\begin{cases}
\pt u(x,t) + (-\Delta)^s u(x,t) = 0, & x \in \Omega, \ t >0, \\ 
u(x,0) = u_0(x), & x \in \Omega,\\
u(x,t) = 0, & x \in \R^N \setminus \Omega, \ t >0
\end{cases}
\end{align}
for a bounded $C^{1,1}$ domain $\Omega \subset \R^N$ with $0\in\Omega$. We prove the following symmetry results: 
\begin{itemize}
	\item if $u_0$ satisfies the balance law of the type \eqref{eq:balance-law}, then $\nabla_x u(0,t) = 0$ for $t >0$ and every $u_0$ is equivalent to $\Omega$ being a ball centered at the origin (cf. \cite[Theorem 4]{MR1454253});
	\item if $u_0$ satisfies $u_0(x) = u_0(-x)$, then $\nabla_x u(0,t) = 0$ for $t >0$ and every $u_0$  is equivalent to $\Omega$ being centrosymmetric with respect to the origin (cf. \cite[Theorems 1 \& 2]{MR1686571}),
\end{itemize}
where  the proof of the above characterization of a ball  for the fractional heat flow not only works for the classical heat flow but also gives a new insight into the problem (see \textbf{Step 3:} {\it A construction to represent} $\nabla_xG(0,y)$ of the proof of Theorem \ref{thm:sym}  \textbf{((1)$\implies$(2))} in Section \ref{ssec:radial_sym}).

For the regularity (up to the boundary) of solutions to the fractional heat equation, we refer to \cite{MR3462074}, where problem \eqref{eq:fhe-bc} is solved by the method of separation of variables with the aid of the eigenfunctions of the fractional Laplacian.

Finally, as in \cite[Sections 4 \& 5]{MR1686571}, we note that these results can be extended to spatial zero points (that is, if,  instead of $\nabla_x u(\cdot, t)=0$, we consider $u(\cdot, t)=0$ for each time $t\ge 0$) and that all the results on the fractional heat equation can be restated for (smooth solutions of) the fractional wave equation 
\begin{align}\label{eq:fwCauchy}
\begin{cases}
\ptt w(x,t) + (-\Delta)^s w(x,t) = 0, & x \in \R^N, \ t >0, \\ 
w(x,0) = 0 \mbox{ and }  \pt w(x,0)=u_0(x), & x \in \R^N,
\end{cases}
\end{align}
where $u_0 \in C^\infty_0(\R^N),$ and
\begin{align}\label{eq:fw}
\begin{cases}
\ptt w(x,t) + (-\Delta)^s w(x,t) = 0, & x \in \Omega, \ t >0, \\  w(x,0) = 0, & x \in \Omega,\\
\pt w(x,0) = u_0(x), & x \in \Omega,\\
w(x,t) = 0, & x \in \R^N \setminus \Omega, \ t >0,
\end{cases}
\end{align}
where $u_0 \in C^\infty_0(\Omega)$ (for which see \cite{MR4221596,MR4124319,MR4097647,MR3987171,MR3903611,2105.11324}) thanks to the properties of the Laplace transform. We note that problem \eqref{eq:fw} can also be solved by the method of separation of variables as problem  \eqref{eq:fhe-bc} is solved in \cite{MR3462074}.

\subsection{Outline}
\label{ssec:outline}
The paper is organized as follows. In Section \ref{sec:preliminaries}, we present some preliminary notions on fractional Sobolev spaces, the fractional Laplace operator,  the Green's function and the fractional heat kernel that are needed throughout the paper. In Section \ref{sec:main}, we state our main results outlined above. In Section \ref{sec:proofs}, we present the proofs. First, we deal with the critical points not evolving along the heat flow: in Section \ref{ssec:cauchy}, we consider the Cauchy problem; in Section \ref{ssec:radial_sym}, we consider the radial symmetry result; and, in Section \ref{ssec:centro-sym}, the central symmetry result. Then, in Sections \ref{sec:proofs-z} and \ref{sec:proofs-w}, we outline how the  previously developed arguments can be adapted to deal with  spatial zero points of solutions of the heat equation and also with critical and zero points of the fractional wave equation.

\section{Preliminaries and notation}
\label{sec:preliminaries}

\subsection{Fractional Sobolev spaces}

Following mostly the notations in \cite{MR3916700}, we define
\[ \mathcal E_s[u,v] = \iint_{\R^{2N}} \frac{(u(x)-u(y))(v(x)-v(y))}{|x-y|^{N+2s}} \dd x \dd y \]
and abbreviate $\mathcal E_s[u] = \mathcal E_s[u, u]$. We denote by 
\[ H^s(\R^N) := \{ u \in L^2(\R^N)\, :\,  \mathcal E_s[u] < \infty \} \]
the (non-homogeneous) Sobolev space of order $s$. 
For a bounded smooth open set $\Omega \subset \R^N$, 
we define the function space 
\[ \widetilde{H}^s(\Omega) := \{ u \in H^s(\R^N) \, : \, u \equiv 0  \text{  on  } \R^N \setminus \Omega \}. \]
The quadratic form $\mathcal E_s$ is closed on $\widetilde{H}^s(\Omega)$ and it is therefore the quadratic form of a self-adjoint operator on $L^2(\Omega)$, which is indeed the fractional Laplacian as
\[ \mathcal E_s[u] = 2 C_{N,s}^{-1} (u, (-\Delta)^s u)_{L^2(\R^N)} = 2 C_{N,s}^{-1} \| (-\Delta)^{s/2} u\|_2^2. \] Denoting by $H^s_0(\Omega)$ the closure of $C^\infty_0(\Omega)$ with respect to $\mathcal E_s$, we have $\widetilde{H}^s(\Omega) \subset H^s_0(\Omega)$ with equality if and only if $s - \frac{1}{2} \notin \mathbb Z$, see \cite[Proposition 1]{MuNa2016} for a more detailed statement. 
We remark that the quantity 
\begin{align}\label{eq:fr_norm}
\Vert u \Vert_{L^2(\R^N)} + \left(\int_{\R^N \times \R^N} \frac{ |u(x) - u(y)|^2}{|x-y|^{N+2s}} \dd x \dd y \right)^{1/2} < \infty
\end{align} 
defines a norm on both $H^{s}(\R^N)$ and $\tilde{H}^s(\Omega)$ and that they are Hilbert spaces with the inner product 
 given by 
\begin{align}\label{eq:inner}
\langle u,v \rangle_{L^2(\R^N)} + \mathcal E_s[u,v].
\end{align}

\subsection{Fractional heat equation}
\label{ssec:fractional-he}

For $0 < s \le 1$, we define the \emph{fractional heat kernel} $P(x,t;s)$ as the solution of the problem 
\begin{align}\label{eq:fhe-d}
\begin{cases}
\pt P(x,t;s) + (-\Delta)^s P(x,t;s) = 0, & x \in \R^N, \ t >0, \\
P(x,0;s) = \delta_{0}, & x \in \R^N.
\end{cases}
\end{align}
Using the Fourier transform, we obtain that 
\begin{align}\label{eq:k-fourier}
P(x,t;s) = \mathcal F^{-1}(e^{-t(2\pi|\xi|)^{2s}}),
\end{align}
from which we also deduce $P(\cdot,t;s) \in C^\infty(\R^N)$ (see \cite[Section 8.3]{MR3614666} for finer Gervey regularity properties, depending on the value of $0 < s \le 1$). 

The fractional heat kernel $P(x,t; s)$ can be computed explicitly for $s = 1$ (the classical heat kernel) and for $s = 1/2$:
\begin{align}
\label{eq:k1} P(x,t;1) &= (4\pi t)^{-N/2} e^{-|x|^2/4t}, \\
\label{eq:k12} P(x,t;1/2) &= \frac{c_N t}{(t^2+|x|^2)^{(N+1)/2}},
\end{align}
where 
\begin{align}\label{eq:cn}
c_N := \frac{\Gamma(N+1/2)}{\pi^{N+1/2}}.
\end{align}

We remark that \eqref{eq:k12} is the Poisson kernel for the Laplace equation posed in the upper half-space
$$H_{N}^+ = \{(x, t): x \in  \R^N, \ t >0\} \subset \R^{N+1},$$
which is consistent with the Caffarelli-Silvestre extension theorem (see \cite{MR2354493}). 

For general values $s \in (0,1)$, the heat kernel has the self-similar form 
\begin{align}\label{eq:ks}
	P(x,t;s) = t^{-N/2s}\Phi_s(|x|t^{-1/2s})
\end{align}
but its profile is not simple  (see \cite[Eqs. (16.11) and (16.14), p. 97]{MR3916700}).
However, the following estimate holds (see \cite[Theorem 16.6]{MR3916700}):
\begin{align}\label{eq:estks}
P(x,t;s) \asymp \frac{t}{(t^{1/s} + |x|^2)^{(N+2s)/2}}.
\end{align}
We refer to \cite[Section 16]{MR3916700} and \cite{BuVa2016} for further details and references on the fractional heat semigroup. 

By using the fractional heat kernel $P(x,t;s)$, the fractional heat equation can be solved for all $0 < s \le 1$: 
\begin{align}\label{eq:sol}
u(x,t) = \int_{\R^N} P(x-y,t;s) u_0(y) \dd y.
\end{align}

The optimal class of initial data for which this representation holds is given in \cite{MR3614666}: it is the class of locally finite Radon measures satisfying the growth condition
\begin{align}\label{eq:growth}
\int_{\R^N} (1+|x|^2)^{-(N+2s)/2} \dd |\mu|(x) < \infty.
\end{align}

\subsection{Fractional Poisson equation}
\label{ssec:fractional-poisson}

For the fractional Laplacian, we observe that the fundamental solution  (see \cite[Theorem 2.3]{MR3461641}), i.e. the solution of  
\begin{align}\label{eq:fl-fs}
(-\Delta)^s \Psi =\delta_0, \quad x \in \R^N,
\end{align}
is given by 
\begin{align}\label{eq:fund}
\Psi(x) = \begin{cases}
\frac{\Gamma(N/2-s)}{2^{2s}\pi^{N/2}\Gamma(s)} |x|^{-N+2s} & \text{ if } N \neq 2s, \\
-\frac{1}{\pi}\log|x| & \text{ if } N = 2s.
\end{cases}
\end{align}
Moreover, the Green's function of the fractional Laplacian on a ball (see \cite[Theorem 3.1]{MR3461641}) is explicitly given by 
\begin{align}\label{eq:green1}
G(x,y) := \kappa(N,s) |x-y|^{2s-N}\int_0^{r_0(x,y)} \frac{t^{s-1}}{(t+1)^{N/2}} \dd t
\end{align}
with 
\begin{align*}
r_0(x,y) = \frac{(r^2-|x|^2)(r^2-|y|^2)}{r^2|x-y|^2}
\end{align*}	
in case $N \neq 2s$ or by 
\begin{align}\label{eq:green2}
G(x,y) = \kappa(1,1/2) \log\left(\frac{r^2-xy+\sqrt{(r^2-x^2)(r^2-y^2)}}{r|y-x|} \right)
\end{align}
in case $N=2s$, where 
\begin{align*}
\kappa(N,s) &= \frac{\Gamma(N/2)}{2^{2s}\pi^{N/2}\Gamma^2(s)} && \text{ if } N \neq 2s, \\
\kappa(1,1/2) &= \frac{1}{\pi} && \text{ if } N = 2s.
\end{align*}
For a general domain $\Omega$, the properties of the Green's function have also been studied; we particularly recall the following result (see \cite[Proposition 2.5]{MR1490808} and \cite[Eq. (1.65)]{MR2569321}).
\begin{proposition}[Green's function of the fractional Laplacian]\label{lm:prob-formula}
	Let $\Omega$ be an open set in $\R^N$, $B_r(x_1)\subset \Omega$, $x \in B_r(x_1)$, and $y \in \R^N$. Then 
	\begin{align*}
	G(x,y) \ge \int_{\R^N\setminus\Omega} G(u,y)P_r(x-x_1,u-x_1) \dd u 
	\end{align*}
	and, if $y \notin \overline{B_r(x_1)}$, 
	\begin{align}
	\label{Cauchy like formula}
	G(x,y) = \int_{\R^N \setminus B_r(x_1)} G(u,y) P_r(x-x_1,u-x_1) \dd u,
	\end{align}
	where 
	\begin{align}
P_r(x,y) := \begin{cases}
\Gamma\left(\tfrac{N}{2}\right)\pi^{-\tfrac{N}{2}-1}\sin(\pi s) \left(\frac{r^2-|x|^2}{|y|^2-r^2} \right)^{s} |x-y|^{-N} & \text{ for } |y|>r, \\
0 & \text{ for } |y|\le r.
\end{cases}
	\end{align}
\end{proposition}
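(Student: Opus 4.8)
The plan is to recognize both displayed relations as instances of the \emph{mean-value (Poisson) property} of the fractional Laplacian on balls, applied to the function $v:=G(\cdot,y)$ for a fixed $y$. Two facts are needed (both classical; see the references cited, or the probabilistic description in which $P_r(x-x_1,\cdot-x_1)$ is exactly the exit distribution from $B_r(x_1)$ of the isotropic $2s$-stable process): if $h\colon\R^N\to[0,\infty]$ is measurable, integrable against $(1+|z|)^{-N-2s}\,\dd z$, and $(-\Delta)^s h=0$ in an open ball $B$, then $h(x)=\int_{\R^N\setminus B}h(z)\,P_B(x,z)\,\dd z$ for every $x\in B$, where $P_B$ is the Poisson kernel of $B$ (so $P_B(x,z)=P_r(x-x_1,z-x_1)$ when $B=B_r(x_1)$); if instead one only has $(-\Delta)^s h\ge0$ in $B$, i.e.\ $h$ is $s$-superharmonic there, the identity weakens to $h(x)\ge\int_{\R^N\setminus B}h(z)\,P_B(x,z)\,\dd z$.

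First I would fix $y\in\R^N$ and set $v(u):=G(u,y)$; if $y\notin\Omega$ then $v\equiv0$ and both assertions are trivial, so assume $y\in\Omega$. By the symmetry of the Green's function and its defining properties, $v$ is nonnegative, vanishes on $\R^N\setminus\Omega$, lies in the weighted $L^1$ class above (it is bounded outside any neighbourhood of $y$ and comparable to $\Psi(\cdot-y)$, hence to $|\cdot-y|^{2s-N}$, near $y$), and satisfies $(-\Delta)^s v=\delta_y$ in $\Omega$ in the distributional sense. In particular $(-\Delta)^s v\ge0$ in $\Omega$, so $v$ is $s$-superharmonic in $\Omega$ and hence in $B_r(x_1)\subset\Omega$. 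Applying the superharmonic inequality with $B=B_r(x_1)$ gives
\[
G(x,y)=v(x)\ \ge\ \int_{\R^N\setminus B_r(x_1)}G(u,y)\,P_r(x-x_1,u-x_1)\,\dd u.
\]
Writing $\R^N\setminus B_r(x_1)=(\Omega\setminus B_r(x_1))\cup(\R^N\setminus\Omega)$ and discarding the integral over $\Omega\setminus B_r(x_1)$, which is nonnegative because $G\ge0$ and $P_r\ge0$, yields the first inequality of the proposition.

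For the identity I would additionally invoke the hypothesis $y\notin\overline{B_r(x_1)}$: then $\delta_y$ puts no mass on $B_r(x_1)$, so $(-\Delta)^s v=0$ in $B_r(x_1)$ and $v$ is genuinely $s$-harmonic there. The equality version of the mean-value property then produces exactly
\[
G(x,y)=v(x)=\int_{\R^N\setminus B_r(x_1)}G(u,y)\,P_r(x-x_1,u-x_1)\,\dd u,
\]
which is \eqref{Cauchy like formula}. The integrand is integrable: near $u=y$ one has $G(u,y)\asymp|u-y|^{2s-N}\in L^1_{\mathrm{loc}}$ while $P_r(x-x_1,\cdot-x_1)$ is bounded there, and near $\partial B_r(x_1)$ one uses the standard integrability of the Poisson kernel.

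The one point requiring care is the rigorous justification of the mean-value property for $v=G(\cdot,y)$: one must verify that $v$ meets the structural hypotheses of the representation formula — measurability and a.e.\ finiteness, the weighted integrability that makes the Poisson integral and $(-\Delta)^s v$ meaningful, and distributional (super)harmonicity up to the ball $B_r(x_1)$ and not merely strictly inside it — which is where the inclusion $B_r(x_1)\subset\Omega$ and the known pointwise bounds on $G_\Omega$ enter. Everything else is the elementary splitting of a nonnegative integral performed above; I expect this to be the only real obstacle.
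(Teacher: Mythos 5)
The paper does not contain a proof of this proposition; it is quoted from \cite[Proposition 2.5]{MR1490808} and \cite[Eq.\ (1.65)]{MR2569321}, and your reconstruction---applying the Poisson-kernel representation for $s$-harmonic, resp.\ $s$-superharmonic, functions to $v=G(\cdot,y)$, with $\delta_y\ge 0$ giving superharmonicity in $B_r(x_1)\subset\Omega$ in general and harmonicity when $y\notin\overline{B_r(x_1)}$---is exactly the argument in those references and is correct. One minor remark: as the first inequality is transcribed in the paper it is actually vacuous, since $G(\cdot,y)\equiv 0$ on $\R^N\setminus\Omega$ so the right-hand side is $0$; the nontrivial content is your intermediate inequality $G(x,y)\ge\int_{\R^N\setminus B_r(x_1)}G(u,y)\,P_r(x-x_1,u-x_1)\,\dd u$, from which the printed one follows by (trivially) discarding the $\Omega\setminus B_r(x_1)$ part.
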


In particular, formula \eqref{Cauchy like formula}, together with the symmetry of $G(x,y),$ guarantees that $G(x,y)$ is real analytic in $\{x\not= y\}.$

We remark that \cite[Theorem 3.2]{MR3461641} gives that, for $r>0$, $h \in C^{2s + \eps}(B_r(0)) \cap C(\bar B_r)$, 
the function 
\begin{align*}
u(x) := \begin{cases}
\int_{B_r(0)} h(y) G(x,y) \dd y & \text{ if } x \in B_r(0), \\
0 & \text{ if } x \in \R^N \setminus B_r(0)
\end{cases}
\end{align*}
is the unique pointwise continuous solution of the problem 
\begin{align*}
\begin{cases}
(-\Delta)^s u(x) = h(x), & x \in B_r(0), \\
u(x)= 0, & x \in \R^N \setminus B_r(0). 
\end{cases}
\end{align*}

Furthermore, we recall that a function $u$ is $s$-harmonic in $x \in \R^N$ if and only if it satisfies the following $s$-mean value property (see  \cite[Definition 2.1]{MR3461641} or  \cite[Section 15]{MR3916700}): 
\begin{align}\label{eq:mean-value}
u(x) = \int_{\R^N \setminus B_r(0)} A_r(y)u(x-y) \dd y,
\end{align}
where 
\begin{align*}
A_r(y) := \begin{cases}
c(N,s) \frac{r^{2s}}{(|y|^2-r^2)|y|^N}, & y \in \R^N \setminus \bar B_r(0), \\ 
0, & y \in \bar B_r(0). 
\end{cases}
\end{align*}

For a general bounded domain $\Omega$, we consider 
\begin{align}\label{eq:fl-h1}
\begin{cases}
(-\Delta)^s u(x) = h(x), & x \in \Omega, \\
u(x)= 0, & x \in \R^N \setminus \Omega.
\end{cases}
\end{align}
We can then prove that $\| u\|_{\widetilde{H}^s(\Omega)} \le C_0 \|h\|_{\widetilde{H}^s(\Omega)}$ using Poincar\'e's inequality. 

\begin{proposition}\label{prop:estimate1}
	There exists a constant $C_0>0$ such that the solution $u$ of \eqref{eq:fl-h1} satisfies 
	$$\| u\|_{\widetilde{H}^s(\Omega)} \le C_0 \|h\|_{\widetilde{H}^s(\Omega)}.$$ 
\end{proposition}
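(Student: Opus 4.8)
The plan is to argue variationally. A function $u\in\widetilde H^s(\Omega)$ is a (weak) solution of \eqref{eq:fl-h1} precisely when
\[ \tfrac{1}{2}C_{N,s}\,\mathcal E_s[u,v] = (h,v)_{L^2(\Omega)} \qquad \text{for all } v\in\widetilde H^s(\Omega), \]
which follows by polarising the identity $\mathcal E_s[w] = 2C_{N,s}^{-1}(w,(-\Delta)^s w)_{L^2(\R^N)}$ recorded in the preliminaries and using that $u$ and all admissible test functions vanish outside $\Omega$. The whole estimate rests on the \emph{fractional Poincar\'e inequality} on the bounded set $\Omega$: there is $C_P=C_P(\Omega,N,s)>0$ with
\[ \|w\|_{L^2(\Omega)}^2 \le C_P\,\mathcal E_s[w] \qquad \text{for all } w\in\widetilde H^s(\Omega). \]
This is standard — it can be obtained from the compactness of the embedding $\widetilde H^s(\Omega)\hookrightarrow L^2(\Omega)$ for bounded $\Omega$, or by a direct comparison with a large ball containing $\Omega$ — and it shows in particular that $\mathcal E_s[\cdot]^{1/2}$ is a norm on $\widetilde H^s(\Omega)$ equivalent to the one in \eqref{eq:fr_norm}.

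Next I would settle existence and uniqueness together with the estimate. Since $|(h,v)_{L^2(\Omega)}|\le\|h\|_{L^2(\Omega)}\|v\|_{L^2(\Omega)}\le C_P^{1/2}\|h\|_{L^2(\Omega)}\,\mathcal E_s[v]^{1/2}$ by Cauchy--Schwarz and the Poincar\'e inequality, the functional $v\mapsto(h,v)_{L^2(\Omega)}$ is bounded and linear on the Hilbert space $(\widetilde H^s(\Omega),\mathcal E_s[\cdot,\cdot])$, so the Riesz representation theorem produces a unique weak solution $u$. Testing the weak formulation with $v=u$ and using Cauchy--Schwarz and Poincar\'e once more,
\[ \tfrac{1}{2}C_{N,s}\,\mathcal E_s[u] = (h,u)_{L^2(\Omega)} \le \|h\|_{L^2(\Omega)}\|u\|_{L^2(\Omega)} \le C_P^{1/2}\,\|h\|_{L^2(\Omega)}\,\mathcal E_s[u]^{1/2}, \]
whence $\mathcal E_s[u]^{1/2}\le 2C_P^{1/2}C_{N,s}^{-1}\|h\|_{L^2(\Omega)}$.

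Finally, combining this bound with the Poincar\'e inequality and the equivalence of norms,
\[ \|u\|_{\widetilde H^s(\Omega)} = \|u\|_{L^2(\Omega)}+\mathcal E_s[u]^{1/2} \le (1+C_P^{1/2})\,\mathcal E_s[u]^{1/2} \le 2C_P^{1/2}(1+C_P^{1/2})C_{N,s}^{-1}\,\|h\|_{L^2(\Omega)}, \]
and since $\|h\|_{L^2(\Omega)}\le\|h\|_{\widetilde H^s(\Omega)}$ this gives the claim with $C_0:=2C_P^{1/2}(1+C_P^{1/2})C_{N,s}^{-1}$. There is no genuine obstacle here: the only ingredient needing external input is the fractional Poincar\'e inequality, which I would simply cite (or dispatch with the one-line comparison with a large ball). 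I would also remark that the argument in fact yields the stronger bound $\|u\|_{\widetilde H^s(\Omega)}\le C_0\|h\|_{L^2(\Omega)}$, the stated inequality being an immediate weakening of it.
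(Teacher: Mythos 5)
Your proof is correct and follows essentially the same route as the paper's: test the weak formulation with $v=u$, apply Cauchy--Schwarz, and close the estimate with the fractional Poincar\'e inequality on $\widetilde H^s(\Omega)$. The only differences are cosmetic --- you track the normalizing factor $\tfrac{1}{2}C_{N,s}$ in the weak formulation more carefully, add a Riesz-representation remark for existence, and note the stronger bound in terms of $\|h\|_{L^2(\Omega)}$ --- none of which change the substance.
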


\begin{proof}
	By the Poincar\'e-type inequality in \cite[Section 2.3]{MR3318251}, we have that there exists $C_P>0$ such that 
	\begin{align*}
	\|u\|_{L^2(\R^N)}^2 \le C_P \mathcal E_s[u]
	\end{align*}
	By the weak formulation of \eqref{eq:fl-h1}
	\begin{align}\label{eq:weak-E}
	\mathcal E_s[u,v] = \langle h, v\rangle_{L^2(\R^N)}
	\end{align}
	for all $v \in \widetilde{H}^s(\Omega)$; setting $u=v$, we have 
	\begin{align*}
	\mathcal E_s[u] &= \langle h,u\rangle_{L^2(\R^N)} \\ &\le \|h\|_{L^2(\R^N)}\|u\|_{L^2(\R^N)} \le C_P\|h\|^2_{L^2(\R^N)},
	\end{align*}
	where we used Schwarz and Poincar\'e's inequalities.
	This yields that there exists $C_0>0$ such that 
	\begin{align*}
	\|u\|_{\widetilde{H}^s(\Omega)} \le C_0 \|h\|_{\widetilde{H}^s(\Omega)}.
	\end{align*}
\end{proof}

\begin{remark}\label{remark meaning of Green operator}
	Proposition \ref{prop:estimate1} means that the Green's function $G$ produces the Green's operator $G:\widetilde{H}^s(\Omega) \to \widetilde{H}^S(\Omega)$ which is a bounded operator with norm $\| G\| \le C_0$. 
\end{remark}

Similarly, for the problem 
\begin{align}\label{eq:fl-h2}
\begin{cases}
(-\Delta)^s u(x) + \lambda u(x) = h(x), & x \in \Omega, \\
u(x)= 0, & x \in \R^N \setminus \Omega,
\end{cases}
\end{align}
where $h \in \widetilde{H}^s(\Omega)$, we can find a solution $u \in \widetilde{H}^s(\Omega)$ and prove the following result. 

\begin{proposition}\label{prop:estimate2}
	For $\lambda >0$, the solution $u \in \widetilde{H}^s(\Omega)$ of \eqref{eq:fl-h2} satisfies 
	\begin{align*}
\|u\|_{\widetilde{H}^s(\Omega)} \le C_0 \|h\|_{\widetilde{H}^s(\Omega)},
\end{align*}
where $C_0$ is the same constant as in Proposition \ref{prop:estimate1}.
\end{proposition}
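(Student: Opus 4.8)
The plan is to mimic the proof of Proposition~\ref{prop:estimate1}, but to use the extra positivity of $\lambda$ to absorb the unwanted term rather than to fight it. Writing the weak formulation of \eqref{eq:fl-h2} for $v \in \widetilde{H}^s(\Omega)$, we have
\[
\mathcal E_s[u,v] + \lambda \langle u, v\rangle_{L^2(\R^N)} = \langle h, v\rangle_{L^2(\R^N)},
\]
and the existence of such a solution $u \in \widetilde{H}^s(\Omega)$ follows from the Lax--Milgram theorem, since the bilinear form $(u,v) \mapsto \mathcal E_s[u,v] + \lambda \langle u, v\rangle_{L^2(\R^N)}$ is bounded and coercive on $\widetilde{H}^s(\Omega)$ (coercivity using either the Poincar\'e-type inequality of \cite{MR3318251} or directly $\lambda>0$).

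Next I would test with $v = u$, obtaining
\[
\mathcal E_s[u] + \lambda \|u\|_{L^2(\R^N)}^2 = \langle h, u\rangle_{L^2(\R^N)} \le \|h\|_{L^2(\R^N)} \|u\|_{L^2(\R^N)}
\]
by the Cauchy--Schwarz inequality. Since $\lambda>0$, the term $\lambda\|u\|_{L^2(\R^N)}^2 \ge 0$ may simply be dropped from the left-hand side, giving exactly
\[
\mathcal E_s[u] \le \langle h, u\rangle_{L^2(\R^N)} \le \|h\|_{L^2(\R^N)} \|u\|_{L^2(\R^N)} \le C_P \|h\|_{L^2(\R^N)}^2,
\]
where the last step is the Poincar\'e-type inequality $\|u\|_{L^2(\R^N)}^2 \le C_P \mathcal E_s[u]$ used as in Proposition~\ref{prop:estimate1} (applied after bounding $\|u\|_{L^2} \le (C_P \mathcal E_s[u])^{1/2}$ and dividing). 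This is the identical chain of inequalities as in Proposition~\ref{prop:estimate1}, so the very same constant $C_0$ works, and one concludes $\|u\|_{\widetilde{H}^s(\Omega)} \le C_0 \|h\|_{\widetilde{H}^s(\Omega)}$ using that $\|h\|_{L^2(\R^N)} \le \|h\|_{\widetilde{H}^s(\Omega)}$.

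There is essentially no obstacle here: the only mild point to be careful about is that adding $\lambda u$ with $\lambda>0$ makes the estimate \emph{easier}, not harder, because the extra term has the favorable sign, so it can be discarded after testing; this is precisely why the constant is unchanged. (If $\lambda$ were allowed to be negative and below the bottom of the spectrum the argument would fail, but that case is excluded.) I would also remark, as in Remark~\ref{remark meaning of Green operator}, that this shows the resolvent $((-\Delta)^s + \lambda)^{-1} \colon \widetilde{H}^s(\Omega) \to \widetilde{H}^s(\Omega)$ is bounded with norm at most $C_0$ uniformly in $\lambda>0$, which is the form in which the estimate will be used later (e.g. when taking Laplace transforms in time).
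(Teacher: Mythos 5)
Your proof is correct and follows essentially the same route as the paper: test the weak formulation with $v = u$, observe that the term $\lambda\|u\|_{L^2}^2$ is nonnegative since $\lambda > 0$ and may therefore be dropped from the left-hand side, and then the chain of inequalities from Proposition~\ref{prop:estimate1} goes through verbatim with the same constant $C_0$. Your additional remarks on existence via Lax--Milgram, the explicit division step in applying Poincar\'e, and the uniform-in-$\lambda$ boundedness of the resolvent are helpful elaborations but do not change the substance of the argument.
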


\begin{proof}
	The proof is similar to the one of Proposition \ref{prop:estimate1}. The only differences are as follows: we replace \eqref{eq:weak-E} by 
	\begin{align}\label{eq:weak-E2}
	\mathcal E_s[u,v]+ \lambda \langle u, v \rangle_{L^2(\R^N)}= \langle h, v\rangle_{L^2(\R^N)}
	\end{align}
	and observe that, setting $v=u$, yields 
	\begin{align*}
	\mathcal E_s[u] + \underbrace{\lambda \| u \|_{L^2(\R^N)}^2}_{\ge 0 \text{ as $\lambda >0$}} = \langle h,u\rangle_{L^2(\R^N)} &\le  \|h\|_{L^2(\R^N)}\|u\|_{L^2(\R^N)}\\ & \le C_P\|h\|^2_{H^s(\R^N)}.
	\end{align*}
\end{proof}

\section{Main results}
\label{sec:main}

\subsection{Critical points and the fractional heat flow}
\label{ssec:main}

Our first main result is the fractional version of \cite[Theorem 1]{MR1454253}, dealing with the Cauchy problem \eqref{eq:fhe}.

\begin{theorem}[Stationary critical points and a balance law for the Cauchy problem]\label{thm:1}
Let $s \in (0,1)$, $u_0 \in C^\infty_0(\mathbb R^N)$ with $\supp(u_0) \subset B_L(0)$ for some $L >0$, and $u$ be the solution of the Cauchy problem for the fractional heat equation \eqref{eq:fhe}. Then the following conditions are equivalent: 
\begin{enumerate}
	\item $\nabla_x u(0,t) = 0$ for any $t >0$; 
	\item $\displaystyle \int_{\mathbb{S}^{N-1}} \omega u_0(r\omega) \dd \omega = 0$ for any $r \ge 0$. 
\end{enumerate}
\end{theorem}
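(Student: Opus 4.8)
The plan is to work directly from the convolution representation \eqref{eq:sol}, which here reads $u(x,t)=\int_{\R^N}P(x-y,t;s)\,u_0(y)\,\dd y$, and to compute $\nabla_x u(0,t)$. Since $u_0\in C^\infty_0(\R^N)$ and $P(\cdot,t;s)\in C^\infty(\R^N)$ with good decay, differentiation under the integral sign is legitimate, and we get $\nabla_x u(0,t)=\int_{\R^N}\nabla_x P(-y,t;s)\,u_0(y)\,\dd y$. The key structural fact is that $P(x,t;s)$ is radial in $x$: from \eqref{eq:k-fourier} its Fourier symbol $e^{-t(2\pi|\xi|)^{2s}}$ is radial, hence $P(x,t;s)=\phi(|x|,t)$ for some smooth profile $\phi$ (consistent with \eqref{eq:ks}). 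Therefore $\nabla_x P(x,t;s)=\phi_r(|x|,t)\,\tfrac{x}{|x|}$, and after the change of variables $y=r\omega$, $r\ge 0$, $\omega\in\mathbb S^{N-1}$, we obtain
\[
\nabla_x u(0,t)=-\int_0^\infty \phi_r(r,t)\,r^{N-1}\left(\int_{\mathbb S^{N-1}}\omega\,u_0(r\omega)\,\dd\omega\right)\dd r.
\]
Thus the implication \textbf{(2)$\implies$(1)} is immediate: if the inner spherical integral vanishes for every $r$, the whole expression is zero for all $t>0$.

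For the converse \textbf{(1)$\implies$(2)}, set $m(r):=\int_{\mathbb S^{N-1}}\omega\,u_0(r\omega)\,\dd\omega\in\R^N$, a smooth compactly supported function of $r\in[0,\infty)$ (supported in $[0,L]$), and suppose $\int_0^\infty \phi_r(r,t)\,r^{N-1}\,m(r)\,\dd r=0$ for all $t>0$. The goal is to conclude $m\equiv 0$. The natural route is a completeness/density argument: show that the family of functions $\{\,r\mapsto \phi_r(r,t)\,r^{N-1} : t>0\,\}$ spans a dense subspace of a suitable function space on $[0,L]$ (e.g. $C([0,L])$ or $L^2([0,L],\dd r)$), componentwise. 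One clean way to do this is to pass to the Fourier/Hankel side: writing $P$ via its radial Fourier representation, one has an explicit integral formula for $\phi_r(r,t)$ in terms of $e^{-t(2\pi\rho)^{2s}}$ and a Bessel function $J_{N/2}$; one then takes the Laplace transform in $t$ of the identity $\int_0^\infty \phi_r(r,t)\,r^{N-1}m(r)\,\dd r = 0$, exchanges integrals, and is reduced to showing that a certain transform of $m$ vanishes identically in the Laplace parameter, which forces $m\equiv 0$ by injectivity of the relevant transforms. Alternatively, one can mimic \cite[Theorem 1]{MR1454253}: expand $u_0$ and the kernel in spherical harmonics, note that only the degree-one harmonic components contribute to $\nabla_x u(0,t)$, and reduce to a one-dimensional moment/analyticity argument in $t$.

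The main obstacle is precisely this completeness step: unlike the classical case $s=1$, where $\phi(r,t)=(4\pi t)^{-N/2}e^{-r^2/4t}$ is explicit and one can differentiate in $t$ to generate polynomials in $r^2$ (recovering a moment-problem argument), for general $s\in(0,1)$ the profile $\Phi_s$ is not elementary; we only have the two-sided bound \eqref{eq:estks} and self-similarity \eqref{eq:ks}. So the argument must be made to work from the Fourier representation and the analyticity of $t\mapsto e^{-t(2\pi\rho)^{2s}}$ rather than from a closed form. A convenient device is to use the self-similar scaling $P(x,t;s)=t^{-N/2s}\Phi_s(|x|t^{-1/2s})$ to rewrite the vanishing condition, after substitution $r=t^{1/2s}\sigma$, as a Mellin-type convolution in $t$ that must vanish identically; injectivity of the Mellin transform (together with the strict positivity/nondegeneracy of $\Phi_s'$ away from $0$, which follows from the sharp asymptotics in \cite{MR3916700}) then yields $m\equiv 0$. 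I would carry out the steps in this order: (i) justify differentiation under the integral and establish radiality of $P$; (ii) derive the displayed formula for $\nabla_x u(0,t)$ and deduce \textbf{(2)$\implies$(1)}; (iii) for \textbf{(1)$\implies$(2)}, rescale using \eqref{eq:ks}, apply a Laplace or Mellin transform in $t$, exchange integrals (justified by Fubini using \eqref{eq:estks} and compact support of $u_0$), and invoke injectivity of the transform to conclude $m\equiv 0$.
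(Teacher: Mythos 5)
Your framework — differentiate the convolution, exploit radiality of $P(\cdot,t;s)$, reduce to a one-dimensional statement about $m(r)=\int_{\mathbb S^{N-1}}\omega\,u_0(r\omega)\,\dd\omega$ — is essentially the same as the paper's, and your treatment of \textbf{(2)$\implies$(1)} is correct and complete. However, for the converse \textbf{(1)$\implies$(2)} you correctly identify the crux (a completeness/injectivity step) but do not actually carry it out: you offer three candidate routes (Laplace transform in $t$, spherical harmonics ``as in the classical case'', Mellin transform after self-similar rescaling) without settling on one and without verifying that the claimed injectivity actually holds. In particular, the Laplace/Mellin route as sketched would still leave you to prove that the family $\{r\mapsto \phi_r(r,t)\,r^{N-1}: t>0\}$ (or its Laplace/Mellin image) is complete, and your appeal to ``strict positivity/nondegeneracy of $\Phi_s'$'' is not obviously available from the two-sided bound \eqref{eq:estks} alone.

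The paper closes this gap in a cleaner and fully explicit way. It uses the dimension-shift identity $\partial_{x_j}P^N(x,t;s)=-2\pi x_j\,P^{N+2}(\tilde x,t;s)$ from \cite[Theorem 1.5]{MR3413864} together with the Bessel integral representation of $P^{N+2}$ from \cite[Eq.~(16.14)]{MR3916700}. After the self-similar substitution $h=t^{-1/(2s)}$, expanding $J_{N/2}$ in its power series converts $\nabla_x u(0,t)$ into a power series in $h^2$ whose $k$-th coefficient is, up to a strictly positive constant $C_k=\int_0^\infty e^{-(2\pi\rho)^{2s}}\rho^{N+1+2k}\dd\rho>0$, the moment $\int_0^L r^{2k}\,A(r)\,r^N\,\dd r$. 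Vanishing of the power series in $h$ forces all these moments to vanish, and then $A\equiv 0$ follows from the density of even polynomials in $C([0,L])$ — a standard Hausdorff moment argument, no transform-inversion needed. So the strategic difference is: you propose to get injectivity from a Laplace or Mellin transform, which would require extra work and additional information about $\Phi_s$, whereas the paper reduces directly to a moment problem by Taylor-expanding the Bessel kernel. Your sketch is plausible but the decisive step is missing; to complete it along the paper's lines, replace the vague transform argument with the dimension-shift identity, the Bessel series expansion, and the moment problem.
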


%

Motivated by this result, we consider the Dirichlet IBVP \eqref{eq:fhe-bc} and prove the following symmetry result (see \cite[Theorem 4]{MR1454253} for $s=1$ and more general Robin-type boundary conditions).

\begin{theorem}[Radial symmetry result]\label{thm:sym}
	Let $s \in (0,1)$ and $u$ be the solution of \eqref{eq:fhe-bc}.  Let $B_\delta(0)$ be a ball
	centered at the origin with radius $\delta > 0$ such that $B_\delta(0) \subset \Omega$. Assume that $\Omega$ is a $C^{1,1}$ bounded domain and that $u_0 \in C_0^\infty(\Omega)$,  $\supp u_0 \subset B_\delta(0)$, and that the balance law 
	\begin{align*}
	\int_{\mathbb{S}^{N-1}} \omega u_0(r\omega) \dd \omega = 0
	\end{align*}
	holds for any $r \in (0,\delta)$. 
	Then the following conditions are equivalent: 
	\begin{enumerate}
		\item $\nabla_x u(0,t) =0$ for any $t >0$ and any $u_0$ as above;
	\item  $\Omega = B_R(0)$ for some $R>0$. 
\end{enumerate}
\end{theorem}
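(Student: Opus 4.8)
The plan is to prove the two implications separately: $(2)\Rightarrow(1)$ is a short equivariance argument, whereas $(1)\Rightarrow(2)$ carries the weight of the theorem. For $(2)\Rightarrow(1)$, suppose $\Omega=B_R(0)$. Both $(-\Delta)^s$ and the domain are invariant under the orthogonal group $O(N)$, so the Dirichlet realization of $(-\Delta)^s$ on $B_R(0)$ and its heat semigroup commute with every $\rho\in O(N)$; hence the heat kernel $p_t(x,y)$ of \eqref{eq:fhe-bc} satisfies $p_t(\rho x,\rho y)=p_t(x,y)$. Differentiating in $x$ at $x=0$ shows that $y\mapsto\nabla_xp_t(0,y)$ is $O(N)$-equivariant, hence of the form $\nabla_xp_t(0,y)=f(|y|,t)\,y/|y|$ for a scalar $f$. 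Since $u(x,t)=\int_{B_R(0)}p_t(x,y)u_0(y)\dd y$, Fubini and the balance law give $\nabla_xu(0,t)=\int_0^R f(r,t)\,r^{N-1}\big(\int_{\mathbb S^{N-1}}\omega\,u_0(r\omega)\dd\omega\big)\dd r=0$ for every $t>0$ and every admissible $u_0$.

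For $(1)\Rightarrow(2)$ I would first pass to the stationary Green's function. Since $\Omega$ is bounded, the first Dirichlet eigenvalue of $(-\Delta)^s$ on $\Omega$ is positive, so $\int_0^\infty u(\cdot,t)\dd t=\big((-\Delta)^s_\Omega\big)^{-1}u_0=\int_\Omega G(\cdot,y)u_0(y)\dd y$, where $G$ is the Green's function of $(-\Delta)^s$ on $\Omega$; integrating the identity $\nabla_xu(0,t)=0$ over $t\in(0,\infty)$ yields $\int_\Omega \nabla_xG(0,y)\,u_0(y)\dd y=0$ for every admissible $u_0$ (applying the Laplace transform rather than integrating gives the same with $G$ replaced by the resolvent kernel $G_\lambda$, if more information is needed). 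Because the balance law says precisely that the first spherical-harmonic component of $u_0$ vanishes at each radius, the admissible data are dense in the $L^2(B_\delta(0))$-orthogonal complement of $\{\,y\mapsto a(|y|)\,y/|y|\,\}$, so this identity forces
\[
\nabla_xG(0,y)=M(|y|)\,\frac{y}{|y|}\qquad\text{for }y\in B_\delta(0)\setminus\{0\},
\]
i.e. $\omega\mapsto\nabla_xG(0,r\omega)$ is linear in $\omega$ at each radius $r\in(0,\delta)$. Differentiating the defining identity $(-\Delta)^s_yG(y,z)=\delta_z$ in $z$ at $z=0$ and using the symmetry of $G$, one also sees that each component $w_i(y):=\partial_{x_i}G(0,y)$ is $s$-harmonic in $\Omega\setminus\{0\}$ and vanishes identically on $\R^N\setminus\Omega$.

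Now comes the construction of $\nabla_xG(0,y)$ advertised in the statement, followed by the conclusion. Fix $\varepsilon\in(0,\delta)$, so $B_\varepsilon(0)\subset\Omega$ and $G(\cdot,y)$ is $s$-harmonic in $B_\varepsilon(0)$ for every $y$ with $|y|>\varepsilon$; the Cauchy-type formula \eqref{Cauchy like formula} (with $x_1=0$, $r=\varepsilon$) reads $G(x,y)=\int_{\R^N\setminus B_\varepsilon(0)}G(u,y)P_\varepsilon(x,u)\dd u$. Differentiating in $x$ at $x=0$, noting that the factor $(\varepsilon^2-|x|^2)^s$ in $P_\varepsilon(\cdot,u)$ has vanishing gradient there while $\nabla_x|x-u|^{-N}|_{x=0}=N|u|^{-N-2}u$, one obtains
\[
\nabla_xG(0,y)=c(N,s)\,\varepsilon^{2s}\!\int_{\R^N\setminus B_\varepsilon(0)}\frac{u}{|u|^{N+2}\,(|u|^2-\varepsilon^2)^{s}}\,G(u,y)\dd u\qquad(|y|>\varepsilon),
\]
with $c(N,s)>0$, which expresses $\nabla_xG(0,y)$ as a weighted average over the exterior of $B_\varepsilon(0)$ of the function $u\mapsto G(u,y)$, real-analytic off the diagonal (cf. the remark after Proposition~\ref{lm:prob-formula}). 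Combining this representation with the first-harmonic structure of $\nabla_xG(0,\cdot)$ on $B_\delta(0)$, with the $s$-harmonicity of $w_i$ in $\Omega\setminus\{0\}$, and with $w_i\equiv0$ on $\R^N\setminus\Omega$, I would run a reflection/unique-continuation argument: for a hyperplane $\Pi$ through the origin with reflection $R_\Pi$, the difference $G(u,y)-G(R_\Pi u,R_\Pi y)$ is $s$-harmonic and real-analytic off the diagonal, the first-harmonic structure forces it to vanish to the appropriate order along $\{u=0\}\cup\{y=0\}$, and unique continuation — together with the exterior constraint, which pins the vanishing set to $\R^N\setminus\Omega$ — then gives $R_\Pi\Omega=\Omega$ for every such $\Pi$, hence $\Omega=B_R(0)$.

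The main obstacle is precisely this last synthesis. Because $(-\Delta)^s$ is nonlocal, the statement ``$w_i$ is $s$-harmonic in $\Omega\setminus\{0\}$'' involves the values of $w_i$ on all of $\R^N$, so one cannot analyse an ODE for $M(r)$ on $B_\delta(0)$ in isolation as in the classical case, nor can one localize the reflection to a single slab as in the moving-plane method; the representation formula above is the device that converts the local first-harmonic information into a usable global constraint on $G$, and making the reflection argument go through with it is the heart of the matter — which is also why, as remarked in the introduction, the argument yields a new proof even when $s=1$.
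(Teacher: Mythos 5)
Your proof of \textbf{(2)\,$\Rightarrow$\,(1)} is correct and is a cleaner route than the paper's. The paper proves that the Green's operator preserves the balance law on a ball, then propagates this to the heat semigroup through the Laplace transform and a Neumann-series argument for the resolvent; you instead apply the $O(N)$-equivariance directly to the Dirichlet heat kernel $p_t$ on $B_R(0)$, deduce $\nabla_x p_t(0,y)=f(|y|,t)\,y/|y|$, and integrate against $u_0$. This is essentially the same equivariance idea implemented one level higher, and it avoids the resolvent machinery (at the mild cost of invoking interior smoothness of $p_t$ for $t>0$ to justify differentiating under the integral sign).

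In \textbf{(1)\,$\Rightarrow$\,(2)} there is a genuine gap. The first part of your argument---passing to $v=\int_0^\infty u\,\dd t$, obtaining $\int_{B_\delta(0)}\nabla_xG(0,y)u_0(y)\dd y=0$ for all admissible $u_0$, and concluding that $\nabla_xG(0,r\omega)=M(r)\omega$ is linear in $\omega$ for $r\in(0,\delta)$---matches the paper. Your Cauchy-type representation for $\nabla_x G(0,y)$ obtained by differentiating formula \eqref{Cauchy like formula} at $x=0$ is also correct as a computation. But the synthesis you propose (reflection across a hyperplane $\Pi$ through the origin, a ``vanishing to the appropriate order'' claim for $G(u,y)-G(R_\Pi u,R_\Pi y)$, and unique continuation to get $R_\Pi\Omega=\Omega$) is not substantiated, and you acknowledge this yourself: the linear-in-$\omega$ structure constrains $M(r)$ to be a matrix, not a multiple of the identity, so it does not force $\nabla_xG(0,\cdot)$ to be invariant under a given reflection and hence does not yield the infinite-order vanishing along $\Pi$ that a unique-continuation argument would require. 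There is no moving-plane-type monotonicity available to supply that either. So the reflection approach, as sketched, does not close.

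The paper's argument at this point is of a different nature and is worth contrasting with your plan. It does \emph{not} reflect. It fixes a basis $f_1,\dots,f_N\in\S^{N-1}$ chosen so that the rays through the $f_i$ travel slightly past $\partial B_{R^*}(0)$ inside $\Omega$, uses the identity $\nabla_xG(0,r\omega)=\sum_i\eta_i\nabla_xG(0,rf_i)$ (valid for $r<\delta$ because of the linear-in-$\omega$ structure, and then for $r<R^*+\varepsilon/2$ by real-analyticity of $\nabla_xG(0,\cdot)$ on $\Omega\setminus\{0\}$), evaluates at boundary points $R(\omega)\omega\in\partial\Omega$ where $\nabla_xG(0,\cdot)=0$, and concludes $\det[h_1(r)\,\cdots\,h_N(r)]\equiv0$ by analyticity since the boundary radii $R(\omega)$ sweep an interval. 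This forces $\det M(r)\equiv0$ for small $r$, contradicting the singular asymptotics $\nabla_xG(0,y)\sim Cr^{-N+2s-1}\omega$. This determinant/analyticity mechanism is precisely the ``new insight'' advertised in the introduction; it replaces both the ODE analysis of $M(r)$ (unavailable here because $(-\Delta)^s$ is nonlocal, as you correctly observe) and the reflection argument you were reaching for.
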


Finally, we study a centrosymmetry result for the IBVP \eqref{eq:fhe-bc} and prove the fractional version of \cite[Theorems 1 \& 2]{MR1686571,MR1718642}. 

\begin{theorem}[Centrosymmetry result]\label{thm:centrosym}
	Let $s \in (0,1)$ and $u$ be the solution of \eqref{eq:fhe-bc}. Let $B_\delta(0)$ be a ball
	centered at the origin with radius $\delta > 0$ such that $B_\delta(0) \subset \Omega$.  Assume that $\Omega$ is a $C^{1,1}$ bounded star-shaped domain with respect to the origin and that $u_0 \in C_0^\infty(\Omega)$, $\supp u_0 \subset B_\delta(0)$, and $u_0(x) = u_0(-x)$ for $x \in B_\delta(0)$. Then, the following conditions are equivalent: 
	\begin{enumerate}
		\item $\nabla_x u(0,t) =0$ for any $t >0$ and any $u_0$ as above;
		\item $\Omega$ is centrosymmetric with respect to the origin (i.e. $x \in \Omega \implies-x \in \Omega$).
	\end{enumerate} 
\end{theorem}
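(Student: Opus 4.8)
The plan is to mirror the structure used for the Cauchy and radial-symmetry results, exploiting the representation formula via the Green's function of the fractional heat operator on $\Omega$. Recall that $u(x,t) = \int_\Omega G(x,y,t) u_0(y)\dd y$, where $G$ is the heat kernel associated with the semigroup generated by $(-\Delta)^s$ on $\widetilde H^s(\Omega)$; differentiating under the integral sign, $\nabla_x u(0,t) = \int_\Omega \nabla_x G(0,y,t) u_0(y)\dd y = \int_{B_\delta(0)} \nabla_x G(0,y,t)u_0(y)\dd y$. For the implication \textbf{(2)$\implies$(1)}, I would argue that if $\Omega$ is centrosymmetric, then by uniqueness of the Dirichlet problem the kernel satisfies the symmetry $G(x,y,t) = G(-x,-y,t)$; hence $\nabla_x G(0,y,t) = -\nabla_x G(0,-y,t)$, i.e. $y \mapsto \nabla_x G(0,y,t)$ is odd on $\Omega$ (which, being centrosymmetric, contains $B_\delta(0)$ symmetrically). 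Pairing an odd kernel with the even datum $u_0$ on the symmetric set $B_\delta(0)$ yields $\nabla_x u(0,t)=0$ for every such $u_0$.

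The substantive direction is \textbf{(1)$\implies$(2)}. Here I would assume $\nabla_x u(0,t)=0$ for all $t>0$ and all admissible even $u_0$, and aim to deduce $\Omega = -\Omega$. The first step is to remove the datum: since $\nabla_x u(0,t) = \int_{B_\delta(0)} \nabla_x G(0,y,t) u_0(y)\dd y$ vanishes for every even $u_0 \in C_0^\infty(B_\delta(0))$, the \emph{even part} in $y$ of the vector-valued function $y\mapsto \nabla_x G(0,y,t)$ must vanish on $B_\delta(0)$; equivalently $\nabla_x G(0,y,t) = -\nabla_x G(0,-y,t)$ for $y \in B_\delta(0)$ and all $t>0$. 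The second step is to propagate this local identity to a global symmetry of $\Omega$. To do this I would pass to the Laplace transform in $t$: setting $\widehat G(x,y,\lambda) = \int_0^\infty e^{-\lambda t} G(x,y,t)\dd t$, the identity becomes $\nabla_x \widehat G(0,y,\lambda) = -\nabla_x\widehat G(0,-y,\lambda)$ on $B_\delta(0)$, and $\widehat G(\cdot,\cdot,\lambda)$ is (for $\lambda>0$) the Green's function of the resolvent problem $(-\Delta)^s + \lambda$, studied in Proposition \ref{prop:estimate2}. Using the resolvent equation and the real analyticity of the fractional Green's function off the diagonal (the analogue of the remark following Proposition \ref{lm:prob-formula}), the identity valid on the open ball $B_\delta(0)$ should propagate by unique continuation / analytic continuation to force a reflection symmetry of the domain: comparing $\widehat G_\Omega$ with $\widehat G_{-\Omega}$ (the Green's function of $-\Omega$, which by centrosymmetrizing coincides with the reflection of $\widehat G_\Omega$) and using that both solve the same resolvent problem with the same boundary behavior near $0$, one concludes $\Omega = -\Omega$. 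As in \cite{MR1686571}, the star-shapedness of $\Omega$ with respect to the origin is what upgrades the infinitesimal/analytic symmetry information at the single point $0$ to the global statement, via a moving-plane or a ray-by-ray comparison argument: along each ray from the origin, star-shapedness guarantees the boundary is hit exactly once, so matching the Green's function singularity structure pins down the two intersection radii to be equal.

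The main obstacle, and the place where the nonlocality of $(-\Delta)^s$ bites hardest, is precisely this propagation step: in the local case of \cite{MR1686571} one can use the local elliptic/parabolic structure and boundary-point analysis of $G$ near $\partial\Omega$, whereas here $G(x,y,t)$ for $x$ near $0$ still "feels" all of $\R^N\setminus\Omega$ through the kernel $P_r$ of Proposition \ref{lm:prob-formula}. I expect to handle this by combining the Cauchy-type reproduction formula \eqref{Cauchy like formula} (to express $\nabla_x\widehat G(0,y,\lambda)$ in terms of the values of $\widehat G$ on $\R^N\setminus B_r(0)$ for a small ball $B_r(0)\subset B_\delta(0)$) with the real analyticity of $\widehat G$ away from the diagonal and the behavior of $\widehat G(x,y,\lambda)$ as $y$ approaches $\partial\Omega$ (it vanishes there to the appropriate order, cf. the boundary regularity of \cite{MR3462074,MR3461641}); the odd-in-$y$ identity at $x=0$, together with star-shapedness, then yields a one-parameter family of identities that can only hold if $\partial\Omega = -\partial\Omega$. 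A technical point to be dispatched along the way is the differentiability of $G(x,y,t)$ in $x$ up to $x=0$ uniformly in $t>0$ on compact subsets, and the legitimacy of differentiating under the integral sign, which follows from the smoothness of the fractional heat kernel and the interior regularity theory, together with the support assumption $\supp u_0 \subset B_\delta(0) \Subset \Omega$ keeping us away from the diagonal and the boundary.
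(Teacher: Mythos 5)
Your direction \textbf{(2)$\implies$(1)} is correct and matches the paper up to phrasing: the paper simply reflects the solution ($w(x,t)=u(-x,t)$ solves the same IBVP, so $u=w$ by uniqueness and $u$ is even in $x$), whereas you push the oddness onto the kernel $G(x,y,t)$ and pair with the even datum; these are the same observation. Your first half of \textbf{(1)$\implies$(2)} is also on track: testing against $u_0 = \psi + \psi(-\cdot)$ for arbitrary $\psi\in C_0^\infty(B_\delta(0))$ eliminates the datum and yields $\nabla_x G(0,y)+\nabla_x G(0,-y)\equiv 0$ on $B_\delta(0)$ (the paper does this with the elliptic potential $v(x)=\int_0^\infty u(x,t)\dd t$ solving $(-\Delta)^s v=u_0$, rather than with a whole family of resolvent kernels, but this is a minor variation).

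The genuine gap is in the propagation/contradiction step, which you acknowledge as ``the main obstacle'' and leave as an expectation rather than an argument. Your plan --- compare $\widehat G_\Omega$ with $\widehat G_{-\Omega}$ and run a moving-plane or ray-by-ray comparison using boundary behavior --- is not what is needed, and it is unclear that it closes, precisely because of the nonlocality you flag: matching boundary behavior along one ray does not control $(-\Delta)^s$ there. The paper's actual mechanism is different and is the key insight you are missing. Set $h(y):=\nabla_x G(0,y)+\nabla_x G(0,-y)$. Real analyticity of $G(\cdot,\cdot)$ off the diagonal (from Proposition \ref{lm:prob-formula}) extends $h\equiv 0$ from $B_\delta(0)$ to the connected component $C$ of $\Omega\cap\Omega^*$ containing $0$; star-shapedness gives $C=\Omega\cap\Omega^*$. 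Then one recognizes (via the $s$-mean-value property \eqref{eq:mean-value} and the observation of \cite{MR2569321} on $s$-harmonicity across a union of two domains) that $h$ solves the homogeneous nonlocal Dirichlet problem
\begin{align*}
\begin{cases}
(-\Delta)^s h=0 & \text{in } (\Omega\cup\Omega^*)\setminus\overline{\Omega\cap\Omega^*},\\
h=0 & \text{in } \R^N\setminus\bigl((\Omega\cup\Omega^*)\setminus\overline{\Omega\cap\Omega^*}\bigr),
\end{cases}
\end{align*}
where the exterior vanishing uses $h\equiv 0$ on $\overline{C}$ (analyticity) and the fact that $\nabla_xG(0,y)=0$ for $y\notin\Omega$, $\nabla_xG(0,-y)=0$ for $y\notin\Omega^*$. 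Uniqueness for this problem forces $h\equiv 0$ everywhere, and on $D:=\Omega\setminus\overline C$ one has $\nabla_xG(0,-y)=0$, so $\nabla_xG(0,y)=0$ on $D\neq\emptyset$; by real analyticity this makes $\nabla_xG(0,\cdot)\equiv 0$ on $\Omega\setminus\{0\}$, contradicting the singularity \eqref{Green function singularity}. You need the identification of $h$ with a homogeneous nonlocal Dirichlet solution on the symmetric-difference region, plus the singularity contradiction; without that the proposal does not conclude.
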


%

\subsection{Spatial zero points and the fractional heat flow}
\label{ssec:main-z}
Let us consider the problems for spatial zero points. Namely, instead of $\nabla_x u(\cdot, t)=0$, we consider $u(\cdot, t)=0$ for each time $t\ge 0$. 
Then, along the similar arguments we can get all theorems by replacing the balance law $\int_{\mathbb{S}^{N-1}} \omega u_0(r\omega) \dd \omega = 0$ by $\int_{\mathbb{S}^{N-1}}  u_0(r\omega) \dd \omega = 0$.

\begin{theorem}[Stationary zero points and a balance law for the Cauchy problem]\label{thm:1-z}
	Let $s \in (0,1)$, $u_0 \in C^\infty_0(\mathbb R^N)$ with $\supp(u_0) \subset B_L(0)$ for some $L >0$, and $u$ be the solution of the Cauchy problem for the fractional heat equation \eqref{eq:fhe}. Then the following conditions are equivalent: 
	\begin{enumerate}
		\item $u(0,t) = 0$ for any $t >0$; 
		\item $\displaystyle \int_{\mathbb{S}^{N-1}}  u_0(r\omega) \dd \omega = 0$ for any $r \ge 0$. 
	\end{enumerate}
\end{theorem}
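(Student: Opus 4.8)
The plan is to mirror the argument for Theorem \ref{thm:1} (stationary critical points), replacing the gradient functional with the pointwise evaluation functional. First I would write the solution via the representation formula \eqref{eq:sol}, so that
\[
u(0,t) = \int_{\R^N} P(-y,t;s)\, u_0(y)\, \dd y = \int_{\R^N} P(y,t;s)\, u_0(y)\, \dd y,
\]
using the radial symmetry of the fractional heat kernel $P(\cdot,t;s)$ (which follows from \eqref{eq:k-fourier}, since $e^{-t(2\pi|\xi|)^{2s}}$ is radial). Passing to polar coordinates $y = r\omega$ with $r = |y| \ge 0$ and $\omega \in \S^{N-1}$, and using again that $P(r\omega,t;s) = \Phi_s(r\, t^{-1/2s})\, t^{-N/2s}$ depends on $y$ only through $r$, we get
\[
u(0,t) = \int_0^\infty P(r,t;s)\, r^{N-1} \left( \int_{\S^{N-1}} u_0(r\omega)\, \dd\omega \right) \dd r,
\]
where I abuse notation writing $P(r,t;s)$ for the common value of $P(y,t;s)$ on $|y|=r$. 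Define the radial average $m_0(r) := \int_{\S^{N-1}} u_0(r\omega)\, \dd\omega$, which is smooth, compactly supported in $[0,L)$, and bounded.

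The implication (2) $\implies$ (1) is then immediate: if $m_0(r) = 0$ for all $r \ge 0$, the integrand vanishes identically and $u(0,t) = 0$ for every $t > 0$. For the converse (1) $\implies$ (2), I would argue that the vanishing of
\[
F(t) := u(0,t) = \int_0^\infty P(r,t;s)\, r^{N-1}\, m_0(r)\, \dd r = 0 \quad \text{for all } t > 0
\]
forces $m_0 \equiv 0$. The cleanest route is to exploit the self-similar scaling \eqref{eq:ks}: substituting $\rho = r\, t^{-1/2s}$ turns $F(t)$ into a function of $t$ whose behavior encodes all the moments of $m_0$, OR — more robustly — to differentiate $F$ in $t$ repeatedly at a fixed time and use that the maps $r \mapsto \partial_t^k P(r,t;s)\, r^{N-1}$ span a dense set. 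A technically transparent alternative, which avoids delicate density claims about the fractional kernel, is to pass to the Fourier/Laplace side: from \eqref{eq:k-fourier} one has $\widehat{u}(\xi,t) = e^{-t(2\pi|\xi|)^{2s}} \widehat{u_0}(\xi)$, hence $u(0,t) = \int_{\R^N} e^{-t(2\pi|\xi|)^{2s}} \widehat{u_0}(\xi)\, \dd\xi$; writing this in polar coordinates in $\xi$ and setting $g(\sigma) := \sigma^{N-1}\int_{\S^{N-1}} \widehat{u_0}(\sigma\theta)\, \dd\theta$ and changing variables $\tau = (2\pi\sigma)^{2s}$, the condition $u(0,t)=0$ for all $t>0$ becomes the vanishing of the Laplace transform of an $L^1$ function of $\tau$, which by injectivity of the Laplace transform gives $g \equiv 0$, i.e. the spherical average of $\widehat{u_0}$ vanishes on every sphere; since $u_0$ is real-valued and the zeroth spherical harmonic component of $u_0$ corresponds to that of $\widehat{u_0}$ (radialization commutes with the Fourier transform), this is equivalent to $m_0 \equiv 0$.

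The main obstacle is making the step (1) $\implies$ (2) fully rigorous without circular appeals to properties of $P$ that are only known through the same kernel estimates: one must either (i) justify the interchange of differentiation/integration and the required density or completeness statement for $\{\partial_t^k P(\cdot,t;s)\}_k$ on the relevant weighted $L^2$ space, or (ii) justify the Fourier-side manipulation, in particular that $\widehat{u_0} \in L^1$ with enough decay (true since $u_0 \in C_0^\infty$) so that Fubini applies and the resulting function of $\tau$ is genuinely integrable, allowing the uniqueness theorem for the Laplace transform to bite. Given that $u_0 \in C_0^\infty(\R^N)$, route (ii) is the more economical and I would adopt it; the only care needed is the change of variables $\tau=(2\pi\sigma)^{2s}$ near $\sigma=0$, which is harmless because $g$ vanishes to high order at the origin. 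Everything else is a routine transcription of the corresponding steps in the proof of Theorem \ref{thm:1}, with $\omega$ deleted from the spherical integrals and the vector-valued quantity $\nabla_x u(0,t)$ replaced by the scalar $u(0,t)$.
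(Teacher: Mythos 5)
Your proposal is correct, and the hard implication $(1)\Rightarrow(2)$ is proved by a genuinely different route than the paper's. The paper's proof of Theorem~\ref{thm:1-z} mirrors the proof of Theorem~\ref{thm:1}: after reducing to the radial average $a(r)=\int_{\S^{N-1}}u_0(r\omega)\,\dd\omega$ via the self-similar form of the kernel, it inserts the Bessel--Hankel representation of the radial profile $\Phi_s$ (from \cite[Eq.~(16.14)]{MR3916700}), expands $J_{N/2-1}$ as its power series, and arrives at the condition that all even moments $\int_0^L r^{2k}\bigl(a(r)r^{N-1}\bigr)\,\dd r$ vanish, whence $a\equiv 0$ by density of polynomials. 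You instead work entirely on the Fourier side: $u(0,t)=\int_{\R^N}e^{-t(2\pi|\xi|)^{2s}}\widehat{u_0}(\xi)\,\dd\xi$, polar coordinates plus the substitution $\tau=(2\pi\sigma)^{2s}$ turn this into a one-dimensional Laplace transform of an $L^1$ function, so uniqueness of the Laplace transform gives that the spherical average of $\widehat{u_0}$ vanishes on every sphere; since spherical averaging commutes with the Fourier transform, this is equivalent to $\int_{\S^{N-1}}u_0(r\omega)\,\dd\omega\equiv 0$. Both proofs are valid, but your route is arguably more robust: it requires no interchange of an infinite sum with the outer $\rho$-integral (an interchange whose absolute convergence is delicate, since the constants $C_k=\int_0^\infty e^{-(2\pi\rho)^{2s}}\rho^{N-1+2k}\dd\rho$ grow like $\Gamma\bigl((N+2k)/(2s)\bigr)$ and can outpace $\Gamma(k+1)\Gamma(k+N/2)$ when $s<\tfrac12$), it uses only standard facts about Schwartz functions and Laplace-transform injectivity, and it applies uniformly in $s\in(0,1)$ without invoking the explicit kernel representation. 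The paper's method, by contrast, has the advantage of being a literal transcription of the proof of Theorem~\ref{thm:1} and of exhibiting the moment condition $\int_0^L r^{2k}(a(r)r^{N-1})\,\dd r=0$ explicitly. Your checklist of technical points (integrability of $\widehat{u_0}$, behavior of the change of variables at $\sigma=0$ where $g(\sigma)=O(\sigma^{N-1})$, and the Fourier-radialization identity) is precisely what is needed, and all of them go through.
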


\begin{theorem}[Radial symmetry result]\label{thm:sym-z}
	Let $s \in (0,1)$ and $u$ be the solution of \eqref{eq:fhe-bc}.  Let $B_\delta(0)$ be a ball
	centered at the origin with radius $\delta > 0$ such that $B_\delta(0) \subset \Omega$. Assume that $\Omega$ is a $C^{1,1}$ bounded domain and that $u_0 \in C_0^\infty(\Omega)$,  $\supp u_0 \subset B_\delta(0)$, and that the balance law 
	\begin{align*}
	\int_{\mathbb{S}^{N-1}}  u_0(r\omega) \dd \omega = 0
	\end{align*}
	holds for any $r \in (0,\delta)$. 
	Then the following conditions are equivalent: 
	\begin{enumerate}
		\item $u(0,t) =0$ for any $t >0$ and any $u_0$ as above;
		\item  $\Omega = B_R(0)$ for some $R>0$. 
	\end{enumerate}
\end{theorem}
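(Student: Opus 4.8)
The plan is to mirror the proof of Theorem \ref{thm:sym}, replacing the vector-valued balance law $\int_{\S^{N-1}}\omega u_0(r\omega)\dd\omega=0$ by the scalar one $\int_{\S^{N-1}}u_0(r\omega)\dd\omega=0$, and the gradient condition $\nabla_x u(0,t)=0$ by the zero condition $u(0,t)=0$. The implication \textbf{(2)$\implies$(1)} is the easy direction: if $\Omega=B_R(0)$, write the solution of \eqref{eq:fhe-bc} via the heat semigroup generated by the Dirichlet fractional Laplacian on the ball, $u(x,t)=\int_{B_R(0)}p_\Omega(x,y,t)u_0(y)\dd y$, and use that the heat kernel $p_\Omega(x,y,t)$ of a ball is invariant under the orthogonal group $O(N)$ fixing the origin, i.e. $p_\Omega(0,Ry,t)=p_\Omega(0,y,t)$ for every $R\in O(N)$. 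Averaging over the sphere of radius $\rho$ and invoking $\int_{\S^{N-1}}u_0(\rho\omega)\dd\omega=0$ gives $u(0,t)=\int_0^R\rho^{N-1}\big(\int_{\S^{N-1}}p_\Omega(0,\rho\omega,t)u_0(\rho\omega)\dd\omega\big)\dd\rho = \int_0^R p_\Omega(0,\rho,t)\rho^{N-1}\big(\int_{\S^{N-1}}u_0(\rho\omega)\dd\omega\big)\dd\rho=0$ (using that $p_\Omega(0,\cdot,t)$ is radial). Actually one must be slightly careful: the initial data is supported in $B_\delta(0)\subset\Omega$, so the balance law is only assumed on $(0,\delta)$, which is exactly the range where $u_0$ is nonzero, so the argument goes through unchanged.

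For the hard direction \textbf{(1)$\implies$(2)}, I would follow the same four-step scheme used for Theorem \ref{thm:sym}. \textbf{Step 1:} Take the Laplace transform in $t$: for $\lambda>0$ set $v_\lambda(x)=\int_0^\infty e^{-\lambda t}u(x,t)\dd t$, which solves the resolvent problem \eqref{eq:fl-h2} with $h=u_0$, i.e. $(-\Delta)^s v_\lambda+\lambda v_\lambda=u_0$ in $\Omega$, $v_\lambda=0$ outside. Condition (1), $u(0,t)=0$ for all $t>0$, transforms into $v_\lambda(0)=0$ for all $\lambda>0$. \textbf{Step 2:} Expand the resolvent via the Neumann/Green-operator series. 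Writing $G_\Omega$ for the Green operator of \eqref{eq:fl-h1} (Proposition \ref{prop:estimate1}, Remark \ref{remark meaning of Green operator}), we have $v_\lambda=(I+\lambda G_\Omega)^{-1}G_\Omega u_0=\sum_{k\ge0}(-\lambda)^k G_\Omega^{k+1}u_0$, convergent for small $\lambda$ by the operator bound $\|G_\Omega\|\le C_0$. Evaluating at $0$ and using $v_\lambda(0)=0$ for all small $\lambda$, each coefficient vanishes: $(G_\Omega^{k+1}u_0)(0)=0$ for every $k\ge0$. The $k=0$ term gives $\int_\Omega G_\Omega(0,y)u_0(y)\dd y=0$. \textbf{Step 3:} This is where the nonlocal subtlety and the ``new insight'' enter, and I expect it to be the main obstacle, just as in Theorem \ref{thm:sym}. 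One needs a representation of $G_\Omega(0,y)$ — or rather of the relevant average — that isolates the geometry of $\partial\Omega$. The construction alluded to in the introduction (``\emph{A construction to represent} $\nabla_x G(0,y)$'') has to be adapted: since we now want information about $G_\Omega(0,y)$ itself rather than its gradient, I would use the representation formula \eqref{Cauchy like formula} of Proposition \ref{lm:prob-formula} with $B_r(x_1)=B_\delta(0)$, writing $G_\Omega(0,y)=\int_{\R^N\setminus B_\delta(0)}G_\Omega(u,y)P_\delta(0,u)\dd u$ for $y\notin\overline{B_\delta(0)}$, and combine it with the $s$-mean value property \eqref{eq:mean-value} of the $s$-harmonic part. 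Exploiting that $u_0$ is centrosymmetric-free but satisfies the scalar balance law, one reduces $(G_\Omega^{k+1}u_0)(0)=0$ to a family of identities on the sphere that, after varying $u_0$ over all admissible data, force the radial profile of $G_\Omega(0,\cdot)$ on $\partial\Omega$ to be constant; by the Hopf-type boundary behaviour of the fractional Green's function this yields that $\partial\Omega$ is a level set of $G_\Omega(0,\cdot)$.

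\textbf{Step 4:} Conclude by a symmetry/overdetermined-problem argument. Once $G_\Omega(0,\cdot)$ (equivalently the torsion-type function or the first resolvent term) is shown to be radially symmetric about the origin near and up to $\partial\Omega$, standard moving-plane or Serrin-type reflection arguments for the fractional Laplacian (as developed for overdetermined problems for $(-\Delta)^s$) give $\Omega=B_R(0)$. Alternatively, and more in the spirit of \cite{MR1686571}, one can use the higher-order conditions $(G_\Omega^{k+1}u_0)(0)=0$ to propagate the symmetry: $G_\Omega$ commutes with the $O(N)$-action once $\Omega$ is a ball, and conversely the vanishing of all these moments against the full admissible family of balanced $u_0$ pins down the domain. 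The main obstacle, as flagged, is Step 3: making the passage from the single scalar integral identity to a pointwise statement about $\partial\Omega$ rigorous in the nonlocal setting, where $G_\Omega(0,y)$ depends on the whole complement of $\Omega$ and not just on local boundary data; the device is precisely the integral representation \eqref{Cauchy like formula} together with analyticity of $G$ off the diagonal, which lets one analytically continue and localize the identity.
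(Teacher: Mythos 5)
For the direction \textbf{(2)$\implies$(1)} your argument is a legitimate and in fact more direct alternative to the paper's: instead of Laplace--transforming, expanding in a Neumann series and iterating over $\lambda$-intervals, you simply use the $O(N)$-invariance of the Dirichlet fractional heat kernel of the ball and average over spheres. That works. (The paper instead shows that the Green's operator preserves the scalar balance law and then reuses Steps 2--4 of Theorem \ref{thm:sym}.)

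For the direction \textbf{(1)$\implies$(2)} there is a genuine gap, and your proposal as written does not constitute a proof. The paper's argument here is \emph{much simpler} than the one for Theorem \ref{thm:sym}: it does not need the Neumann series, the high moments $(G_\Omega^{k+1}u_0)(0)=0$, the construction with the linearly independent vectors $f_1,\dots,f_N$, a Hopf-type boundary estimate, or any moving-plane/Serrin-type overdetermined argument. The key step you are missing is this. Because (1) is assumed to hold for \emph{every} admissible $u_0$, one may take $u_0(x)=\eta(|x|)\psi(x/|x|)$ with $\eta$ smooth supported in $(0,\delta)$ and $\int_{\S^{N-1}}\psi\dd\omega=0$; integrating $v(x)=\int_0^\infty u(x,t)\dd t$ (so that $(-\Delta)^s v=u_0$ in $\Omega$) and imposing $v(0)=0$, the arbitrariness of $\eta$ and $\psi$ forces
\begin{align*}
\int_{\S^{N-1}} G(0,r\omega)\psi(\omega)\dd\omega=0 \quad\text{for all such }\psi,
\end{align*}
hence $G(0,y)=m(|y|)$ is radially symmetric on $B_\delta(0)\setminus\{0\}$. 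Since $G(0,\cdot)$ is real-analytic on $\Omega\setminus\{0\}$ (Proposition \ref{lm:prob-formula}), this radial symmetry propagates along rays to the largest ball $B_{R^*}(0)\subset\Omega$ whose closure touches $\partial\Omega$; because $G(0,\cdot)>0$ in $\Omega$ and $\equiv 0$ outside $\Omega$, radial symmetry on $\bar B_{R^*}(0)$ forces $G(0,\cdot)=0$ on the whole sphere $|y|=R^*$, hence $\partial B_{R^*}(0)\cap\Omega=\emptyset$, and connectedness of $\Omega$ gives $\Omega=B_{R^*}(0)$. Your Step 3 never isolates this observation, and your Step 4's appeal to moving planes for overdetermined fractional problems is a non sequitur here: there is no overdetermined boundary condition in this problem to feed into such machinery, and no such machinery is needed. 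You should also note that the elaborate construction from the gradient case (the $f_i$'s and the determinant of $[h_1\cdots h_N]$) is precisely what the scalar balance law lets you avoid.

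Finally, a small correction to your Step 2: once $G(0,\cdot)$ is determined to be radial in $B_\delta(0)$, the higher moments $(G_\Omega^{k+1}u_0)(0)=0$ become automatic and carry no additional information; singling out the $k=0$ term is all that is used, and in fact even that is already captured by the $\lambda=0$ (i.e.\ time-integrated) version $v(0)=0$, so the whole resolvent expansion is unnecessary in this direction.
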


\begin{theorem}[Centrosymmetry result]\label{thm:centrosym-z}
	Let $s \in (0,1)$ and $u$ be the solution of \eqref{eq:fhe-bc}. Let $B_\delta(0)$ be a ball
	centered at the origin with radius $\delta > 0$ such that $B_\delta(0) \subset \Omega$.  Assume that $\Omega$ is a $C^{1,1}$ bounded star-shaped domain with respect to the origin and that $u_0 \in C_0^\infty(\Omega)$, $\supp u_0 \subset B_\delta(0)$, and $u_0(x) = - u_0(-x)$ for $x \in B_\delta(0)$. Then, the following conditions are equivalent: 
	\begin{enumerate}
		\item $u(0,t) =0$ for any $t >0$ and any $u_0$ as above;
		\item $\Omega$ is centrosymmetric with respect to the origin (i.e. $x \in \Omega \implies-x \in \Omega$).
	\end{enumerate} 
\end{theorem}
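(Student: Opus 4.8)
## Proof proposal for Theorem \ref{thm:centrosym-z}

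The plan is to mirror the proof of the centrosymmetry result for critical points (Theorem \ref{thm:centrosym}), replacing everywhere the condition $u_0(x)=u_0(-x)$ by the odd condition $u_0(x)=-u_0(-x)$ and the conclusion $\nabla_x u(0,t)=0$ by $u(0,t)=0$, and using the representation $u(x,t)=\int_\Omega G_t(x,y)u_0(y)\dd y$, where $G_t(x,y)$ is the Dirichlet heat kernel for $(-\Delta)^s$ on $\Omega$. The implication \textbf{((2)$\implies$(1))} is the easy direction: if $\Omega$ is centrosymmetric, then by uniqueness for \eqref{eq:fhe-bc} the heat kernel satisfies $G_t(-x,-y)=G_t(x,y)$, hence $G_t(0,y)=G_t(0,-y)$; combining this with the oddness $u_0(-y)=-u_0(y)$ and the change of variables $y\mapsto -y$ shows $u(0,t)=\int_\Omega G_t(0,y)u_0(y)\dd y=-u(0,t)$, so $u(0,t)=0$ for every $t>0$ and every admissible $u_0$.

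For \textbf{((1)$\implies$(2))}, I would first extract information about $G_t(0,\cdot)$ from the hypothesis. Fix any $y_0\in\Omega$ with $|y_0|<\delta$ and its reflection $-y_0$. Using a sequence of admissible odd initial data $u_0$ concentrating (as an odd pair of bumps) near $\pm y_0$ — i.e. $u_0\to \tfrac12(\delta_{y_0}-\delta_{-y_0})$ in a suitable weak sense, which is legitimate since such $u_0$ is supported in $B_\delta(0)$, is $C_0^\infty$, and satisfies $u_0(x)=-u_0(-x)$ — condition (1) forces
\begin{align}\label{eq:Gt-eq}
G_t(0,y_0)=G_t(0,-y_0)\quad\text{for all }t>0,\ \text{all }y_0\in B_\delta(0)\setminus\{0\}.
\end{align}
Next I would propagate \eqref{eq:Gt-eq} from the small ball $B_\delta(0)$ to all of $\Omega$. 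The key tool is the time-analyticity and the spectral/semigroup structure: writing $G_t(0,y)=\sum_k e^{-\lambda_k t}\phi_k(0)\phi_k(y)$ in terms of the Dirichlet eigenfunctions $\phi_k$ of $(-\Delta)^s$ on $\Omega$, the equality \eqref{eq:Gt-eq} for all $t>0$ on the open set $B_\delta(0)$ can be upgraded, by an argument analogous to the one in \cite{MR1686571} (taking Laplace transforms in $t$, using that the Green's function $y\mapsto G(0,y)$ of the resolvent is real-analytic in $\{x\neq y\}$ by the remark after Proposition \ref{lm:prob-formula} and satisfies $(-\Delta)^s_y$-type equations, and invoking unique continuation / analytic continuation for the resolvent Green's function $G_\lambda(0,\cdot)$), to the statement that $G_\lambda(0,y)=G_\lambda(0,-y)$ for all $y$ in the connected set $\Omega\cap(-\Omega)$, and hence that the relevant solutions are odd on the overlap.

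The final and hardest step is the geometric rigidity: deducing $\Omega=-\Omega$ from the symmetry of the Dirichlet data. Here the star-shapedness with respect to the origin is essential. The strategy, following \cite{MR1686571,MR1718642}, is to assume for contradiction that $\Omega\neq-\Omega$; by star-shapedness the "symmetric difference" region can be probed by looking at the behavior of $G_\lambda(0,y)$ as $y$ approaches $\partial\Omega$ from inside versus the reflected behavior, exploiting the boundary vanishing $G_t(\cdot,y)\equiv 0$ on $\R^N\setminus\Omega$ together with the exact boundary decay rate $\mathrm{dist}(y,\partial\Omega)^s$ of the Green's function (from \cite{MR3461641,MR1490808}). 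If $\Omega$ and $-\Omega$ differ, one finds a point where $G_\lambda(0,\cdot)$ would have to vanish (being outside one copy) while its reflected value is strictly positive (being interior to the other copy), contradicting the oddness obtained in the previous step; star-shapedness guarantees that such a "one-sided" boundary point exists and that the normal-derivative/decay comparison is clean. I expect this boundary rigidity argument — making the nonlocal analogue of the Hopf-lemma/boundary-point comparison rigorous, since $(-\Delta)^s$ has no classical normal derivative and one must instead use the $s$-harmonic boundary behavior encoded in $P_r$ and $A_r$ from Proposition \ref{lm:prob-formula} and \eqref{eq:mean-value} — to be the main obstacle, exactly as the analogous step is the crux of Theorem \ref{thm:centrosym}.
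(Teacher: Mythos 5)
Your $(2)\Rightarrow(1)$ matches the paper, which reflects the solution directly via $w(x,t) := -u(-x,t)$ and invokes uniqueness, but the argument is the same. For $(1)\Rightarrow(2)$, your extraction of the Green's function symmetry on $B_\delta(0)$ is more elaborate than needed: the paper passes to $v(x) := \int_0^\infty u(x,t)\,\dd t$, which solves $(-\Delta)^s v = u_0$ with zero exterior data, takes $u_0(x) = \psi(x) - \psi(-x)$ for arbitrary $\psi \in C^\infty_0(B_\delta(0))$, and reads off $G(0,y) - G(0,-y) \equiv 0$ on $B_\delta(0)$ at once --- no concentrating bumps, no heat-kernel series, no separate Laplace transform in $t$. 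The analyticity extension to $\Omega \cap \Omega^*$ agrees with the paper. For the geometric rigidity step, the paper does not use a Hopf-lemma or boundary-decay comparison. It sets $h(y) := G(0,y) - G(0,-y)$, observes $h \equiv 0$ on $\overline{\Omega \cap \Omega^*}$ and on $\R^N \setminus (\Omega \cup \Omega^*)$, uses the $s$-mean-value property to regard $h$ as the solution of $(-\Delta)^s h = 0$ on $(\Omega \cup \Omega^*) \setminus \overline{\Omega \cap \Omega^*}$ with zero exterior data, concludes $h \equiv 0$ globally, hence $G(0,y) = 0$ for $y \in \Omega \setminus \overline{\Omega \cap \Omega^*}$, and then derives a contradiction with the singularity \eqref{Green function singularity} of $G(0,\cdot)$ at the pole via real-analyticity on $\Omega \setminus \{0\}$. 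The boundary-comparison route you sketch is a genuinely different argument, and --- contrary to your expectation --- it does not need any decay-rate estimate or nonlocal Hopf lemma: star-shapedness of $\Omega$ and $\Omega^*$ yields, if $\Omega \neq \Omega^*$, a point $P \in \partial \Omega^* \cap \Omega$ lying on $\partial(\Omega \cap \Omega^*)$; letting $y \to P$ in the identity $G(0,y) = G(0,-y)$ valid on $\Omega \cap \Omega^*$ (legitimate since $G(0,\cdot)$ is continuous up to the boundary in a $C^{1,1}$ domain) gives $G(0,P) = G(0,-P) = 0$ because $-P \in \partial\Omega$, while $G(0,P) > 0$ because $P$ is an interior point of $\Omega$ away from the pole. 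So the step you flag as the hardest reduces to a plain continuity comparison and is in fact shorter than the unique-continuation route the paper takes.
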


\subsection{Fractional wave equation and symmetry}
\label{ssec:main-w}

Finally, we remark that all the results above  hold true if, instead of the fractional heat equation, we consider the fractional wave equation.  

\begin{theorem}[Stationary critical points and a balance law for the Cauchy problem for the fractional wave equation]\label{thm:fw-1}
	Let $s \in (0,1)$, $u_0 \in C^\infty_0(\mathbb R^N)$ with $\supp(u_0) \subset B_L(0)$ for some $L >0$, and $w$ be the solution of the Cauchy problem for the fractional wave equation \eqref{eq:fwCauchy}. Then the following conditions are equivalent: 
	\begin{enumerate}
		\item $\nabla_x w(0,t) = 0$ for any $t >0$; 
		\item $\displaystyle \int_{\mathbb{S}^{N-1}} \omega u_0(r\omega) \dd \omega = 0$ for any $r \ge 0$. 
	\end{enumerate}
\end{theorem}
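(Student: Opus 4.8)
The plan is to reduce Theorem~\ref{thm:fw-1} to Theorem~\ref{thm:1} via the Laplace transform in time, exploiting the fact that both the fractional heat semigroup and the fractional wave propagator have the same spatial operator $(-\Delta)^s$ acting on them and differ only through a scalar Fourier multiplier in $\xi$. Concretely, write $\hat{u}(\xi,t) = \mathcal F u(\cdot,t)$ for the solution of the heat Cauchy problem \eqref{eq:fhe} and $\hat{w}(\xi,t)=\mathcal F w(\cdot,t)$ for the wave Cauchy problem \eqref{eq:fwCauchy}. Then $\hat{u}(\xi,t) = e^{-t|\xi|^{2s}}\hat{u}_0(\xi)$ (up to the normalization of the transform used in \eqref{eq:k-fourier}) while $\hat{w}(\xi,t) = |\xi|^{-s}\sin(t|\xi|^{s})\,\hat{u}_0(\xi)$. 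The point is that $\nabla_x$ corresponds to multiplication by $i\xi$ on the Fourier side, so in both cases $\widehat{\nabla_x u}(\xi,t) = i\xi\, m(\xi,t)\,\hat{u}_0(\xi)$ and $\widehat{\nabla_x w}(\xi,t)= i\xi\, \tilde m(\xi,t)\,\hat u_0(\xi)$ where $m,\tilde m$ are radial in $\xi$, and the condition ``$\nabla_x$ of the solution vanishes at $x=0$ for all $t>0$'' becomes a statement about the vanishing of an integral against a radial weight, which is exactly the shape of the balance law \eqref{eq:balance-law}.

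The key steps, in order, are as follows. First I would establish that for $u_0\in C^\infty_0(\R^N)$ the wave problem \eqref{eq:fwCauchy} has a unique smooth solution with all the integrability needed to justify the Fourier computations, and record the explicit formula $w(x,t)=\mathcal F^{-1}\!\big(|\xi|^{-s}\sin(t|\xi|^s)\,\hat u_0(\xi)\big)$; since $u_0$ is compactly supported, $\hat u_0$ is entire of exponential type and the multiplier $|\xi|^{-s}\sin(t|\xi|^s)$ is bounded (and analytic through $\xi=0$ after accounting for the $\sin$), so this is routine. Second, I would show the implication \textbf{(2)$\Rightarrow$(1)}: if the balance law holds, then decomposing $u_0$ into spherical harmonics, its projection onto the degree-one component vanishes, which forces $\hat u_0$ itself to have vanishing gradient-type odd part at the origin along each ray; more directly, the balance law says $\int_{\R^N}\omega u_0(r\omega)\,r^{N-1}\,dr\,d\omega$-type quantities vanish, equivalently $u_0$ is orthogonal to every function of the form (radial)$\times\omega_k$. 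One then checks that $\nabla_x w(0,t)=\mathcal F^{-1}\!\big(i\xi\,|\xi|^{-s}\sin(t|\xi|^s)\hat u_0\big)\big|_{x=0}=c\int_{\R^N} i\xi\,|\xi|^{-s}\sin(t|\xi|^s)\hat u_0(\xi)\,d\xi$, and since the factor $|\xi|^{-s}\sin(t|\xi|^s)$ is radial while $\hat u_0$ inherits the same balance-law orthogonality (the Fourier transform commutes with the $O(N)$-action and maps the degree-one spherical-harmonic sector to itself), writing the integral in polar coordinates makes the angular integral $\int_{\S^{N-1}}\xi\,(\text{even part of }\hat u_0 \text{ in that sector})\,d\sigma$ vanish. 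So $\nabla_x w(0,t)=0$ for all $t$.

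For the reverse implication \textbf{(1)$\Rightarrow$(2)}, which I expect to be the main obstacle, I would pass to the Laplace transform in $t$: for $\mathrm{Re}\,\tau>0$,
\[
\int_0^\infty e^{-\tau t} w(x,t)\,dt = \mathcal F^{-1}\!\left(\frac{\hat u_0(\xi)}{\tau^2+|\xi|^{2s}}\right)(x),
\]
and by analyticity the hypothesis $\nabla_x w(0,t)=0$ for all $t>0$ gives $\int_{\R^N}\frac{i\xi\,\hat u_0(\xi)}{\tau^2+|\xi|^{2s}}\,d\xi=0$ for all such $\tau$. The same Laplace-transform identity applied to the \emph{heat} solution gives $\int_0^\infty e^{-\tau t}\nabla_x u(0,t)\,dt = \mathcal F^{-1}(\cdots)$ with kernel $\frac{i\xi\,\hat u_0(\xi)}{\tau+|\xi|^{2s}}$; thus the vanishing of the wave gradient at the origin for all $t$ is equivalent, via the substitution $\tau\mapsto\tau^2$ (which sweeps out the same set of values up to which one can analytically continue), to the vanishing of the heat gradient at the origin for all $t$ — hence condition (1) of Theorem~\ref{thm:1}. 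An alternative, perhaps cleaner, route that avoids matching the two resolvent families exactly: differentiate the wave identity in $t$ at $t=0^+$ to recover $\partial_t w(x,0)=u_0$, and more usefully expand $|\xi|^{-s}\sin(t|\xi|^s)=\sum_{k\ge 0}\frac{(-1)^k t^{2k+1}}{(2k+1)!}|\xi|^{2ks}$, so that $\nabla_x w(0,t)=\sum_k \frac{(-1)^k t^{2k+1}}{(2k+1)!}\,c\!\int i\xi\,|\xi|^{2ks}\hat u_0(\xi)\,d\xi$; vanishing for all $t>0$ forces every coefficient $\int i\xi\,|\xi|^{2ks}\hat u_0(\xi)\,d\xi=0$, i.e. $\int\xi\,|\xi|^{2ks}\hat u_0(\xi)\,d\xi=0$ for all $k\ge 0$. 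The analogous expansion for the heat flow ($e^{-t|\xi|^{2s}}=\sum_m \frac{(-t)^m}{m!}|\xi|^{2ms}$) shows that its gradient at the origin vanishes for all $t$ iff the \emph{same} family of moments $\int\xi\,|\xi|^{2ms}\hat u_0(\xi)\,d\xi$ vanishes for all $m\ge0$ — the moment conditions literally coincide. This gives the equivalence (1 for wave)$\iff$(1 for heat), and then Theorem~\ref{thm:1} supplies (1 for heat)$\iff$(2), completing the proof. The only delicate point is justifying the term-by-term integration of these power series against $\hat u_0$, which follows from the exponential-type decay/growth control on $\hat u_0$ coming from $u_0\in C^\infty_0$; I would also note that essentially the same Laplace-transform device underlies the claimed extensions to the IBVP \eqref{eq:fw}, where the fractional-heat resolvent is replaced by the resolvent of the fractional Dirichlet Laplacian and one invokes Theorems~\ref{thm:sym} and \ref{thm:centrosym} in place of Theorem~\ref{thm:1}.
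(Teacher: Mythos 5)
Your main argument (Laplace transform in $t$, identifying the wave resolvent at $\tau$ with the heat resolvent at $\tau^2$, then invoking injectivity of the Laplace transform and Theorem~\ref{thm:1}) is exactly the route taken in the paper, which proves $W_\lambda = U_{\lambda^2}$ and deduces $\nabla_x w(0,t)\equiv 0 \iff \nabla_x u(0,t)\equiv 0$ before citing Theorem~\ref{thm:1}.

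One caution about the ``alternative, perhaps cleaner'' power-series route: the direction ``vanishing for all $t$ $\Rightarrow$ all moments $\int\xi\,|\xi|^{2ks}\hat u_0\,d\xi$ vanish'' is fine (it only needs smoothness of $t\mapsto\nabla_x w(0,t)$ at $t=0^+$ and repeated differentiation under the integral, legitimate since $\hat u_0$ is Schwartz), but the converse — needed to read off the equivalence with the heat statement — requires resumming the series, and the dominating series $\sum_m \frac{t^m}{m!}\int|\xi|^{2ms}|\hat u_0|\,d\xi = \int e^{t|\xi|^{2s}}|\hat u_0|\,d\xi$ can diverge. Paley--Wiener gives exponential-type control in the \emph{imaginary} directions; on the real axis $\hat u_0$ only has rapid (super-polynomial) decay unless $u_0$ lies in a Gevrey class, so for generic $u_0\in C^\infty_0$ the heat solution is $C^\infty$ but not real-analytic at $t=0$ and its Taylor series there need not converge. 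So the power-series alternative as stated only yields one implication; stick with the Laplace-transform/resolvent argument, which closes the loop without this issue.
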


\begin{theorem}[Radial symmetry for the wave equation]\label{thm:fw-radial}
	Let $s \in (0,1)$ and $w$ be the solution of \eqref{eq:fw}. Let $B_\delta(0)$ be a ball centered at the origin with radius $\delta > 0$ such that $B_\delta(0) \subset \Omega$. Let us consider the Laplace transform $W_\lambda(x) = \int_0^\infty e^{-\lambda t}w(x,t) \dd t$ (for $\lambda >0$).   Assume that $\Omega$ is a $C^{1,1}$ bounded star-shaped domain with respect to the origin and that $u_0 \in C_0^\infty(\Omega)$, $\supp u_0 \subset B_\delta(0)$ and that the balance law 
	\begin{align*}
	\int_{\mathbb{S}^{N-1}} \omega u_0(r\omega) \dd \omega = 0
	\end{align*}
	holds for any $r \in (0,\delta)$. 
	Then the following conditions are equivalent: 
	\begin{enumerate}
		\item $\nabla_x W_\lambda(0) =0$ for any $\lambda >0$ and any $u_0$ as above;
		\item  $\Omega = B_R(0)$ for some $R>0$. 
	\end{enumerate}
\end{theorem}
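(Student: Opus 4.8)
\subsection*{Proof proposal}

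The plan is to deduce Theorem~\ref{thm:fw-radial} from the already‑proved radial symmetry result for the fractional heat flow, Theorem~\ref{thm:sym}, using the Laplace transform as a bridge. The key point is that the Laplace transform in time of both the heat flow and the wave flow is governed by a resolvent equation of the form \eqref{eq:fl-h2}, with spectral parameter $\lambda$ in the heat case and $\lambda^2$ in the wave case; once this is made precise, condition~(1) of the two theorems turns out to be literally the same statement.

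First I would write down the spectral representation of the solution $w$ of \eqref{eq:fw}, obtained by separation of variables exactly as \eqref{eq:fhe-bc} is solved in \cite{MR3462074}. Let $\{\mu_k\}_{k\ge1}$ (with $0<\mu_1\le\mu_2\le\cdots$) be the Dirichlet eigenvalues of $(-\Delta)^s$ on $\Omega$, let $\{\phi_k\}$ be an associated $L^2(\Omega)$‑orthonormal basis of eigenfunctions, and set $c_k:=\langle u_0,\phi_k\rangle_{L^2(\Omega)}$. Then
\begin{align*}
w(x,t)=\sum_{k\ge1}c_k\,\mu_k^{-1/2}\sin\!\big(\mu_k^{1/2}t\big)\,\phi_k(x),
\qquad u(x,t)=\sum_{k\ge1}c_k\,e^{-\mu_k t}\,\phi_k(x),
\end{align*}
where $u$ is the solution of \eqref{eq:fhe-bc} with the same datum $u_0$. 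Since $u_0\in C_0^\infty(\Omega)$, the coefficients $c_k$ decay faster than any power of $\mu_k$, so, together with interior elliptic estimates for the $\phi_k$, both series and the series obtained by applying $\nabla_x$ converge uniformly on compact subsets of $\Omega$; and the energy identity for \eqref{eq:fw} gives $\|w(\cdot,t)\|_{L^2(\Omega)}\le t\,\|u_0\|_{L^2(\Omega)}$, so the Laplace transforms $W_\lambda$ and $U_\mu(x):=\int_0^\infty e^{-\mu t}u(x,t)\dd t$ are well defined for $\lambda,\mu>0$ and may be computed term by term. Using $\int_0^\infty e^{-\lambda t}\sin(at)\dd t=a/(\lambda^2+a^2)$ and $\int_0^\infty e^{-(\mu+\mu_k)t}\dd t=1/(\mu+\mu_k)$, this yields
\begin{align*}
W_\lambda(x)=\sum_{k\ge1}\frac{c_k}{\lambda^2+\mu_k}\,\phi_k(x)=U_{\lambda^2}(x),
\end{align*}
so in particular $\nabla_x W_\lambda(0)=\nabla_x U_{\lambda^2}(0)$; moreover $U_\mu$ solves \eqref{eq:fl-h2} with datum $u_0$ and parameter $\mu$, and since $\supp u_0\subset B_\delta(0)$ a bootstrap on $(-\Delta)^s U_\mu=u_0-\mu U_\mu$ shows $U_\mu$ is smooth near the origin, so $\nabla_x U_\mu(0)$ is meaningful.

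Next I would invoke the injectivity of the Laplace transform. From the spectral formula, $t\mapsto\nabla_x u(0,t)=\sum_k c_k e^{-\mu_k t}\nabla\phi_k(0)$ is continuous on $(0,\infty)$ with Laplace transform $\sum_k \frac{c_k}{\mu+\mu_k}\nabla\phi_k(0)=\nabla_x U_\mu(0)$; hence $\nabla_x u(0,t)=0$ for all $t>0$ if and only if $\nabla_x U_\mu(0)=0$ for all $\mu>0$. Since $\lambda\mapsto\lambda^2$ is a bijection of $(0,\infty)$ and $\nabla_x W_\lambda(0)=\nabla_x U_{\lambda^2}(0)$, this is in turn equivalent to $\nabla_x W_\lambda(0)=0$ for all $\lambda>0$. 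Holding this equivalence for each admissible $u_0$ (the class of admissible $u_0$, for a given $\Omega$, is the same in Theorems~\ref{thm:sym} and~\ref{thm:fw-radial}), condition~(1) of Theorem~\ref{thm:fw-radial} is equivalent to condition~(1) of Theorem~\ref{thm:sym}; the extra star‑shapedness assumption on $\Omega$ in Theorem~\ref{thm:fw-radial} only restricts the class of domains and is automatically satisfied by $B_R(0)$, so it causes no difficulty in either implication. Theorem~\ref{thm:sym} then gives the equivalence with $\Omega=B_R(0)$, completing the proof.

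The genuinely new content here is the identity $W_\lambda=U_{\lambda^2}$ — equivalently, the fact that $W_\lambda$ solves \eqref{eq:fl-h2} with parameter $\lambda^2$ — after which the theorem is a corollary of Theorem~\ref{thm:sym}, since all of the analytic difficulty of the radial‑symmetry problem already resides there. What remains is routine bookkeeping that I would expect to be the only mildly technical step: the convergence of the termwise Laplace transform and of the differentiated eigenfunction series (from the rapid decay of the $c_k$ and interior estimates for the $\phi_k$), the a priori bound $\|w(\cdot,t)\|_{L^2(\Omega)}\le t\|u_0\|_{L^2(\Omega)}$ needed for convergence of the Laplace integral (conservation of energy for \eqref{eq:fw}), and the interior smoothness of $W_\lambda$ near the origin.
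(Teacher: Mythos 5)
Your proposal is correct and follows essentially the same route as the paper: both reduce Theorem \ref{thm:fw-radial} to Theorem \ref{thm:sym} by Laplace-transforming in time, observing the identity $W_\lambda = U_{\lambda^2}$ (where $U_\mu$ is the Laplace transform of the solution of \eqref{eq:fhe-bc}), and invoking injectivity of the Laplace transform to conclude that condition~(1) for the wave equation is equivalent to condition~(1) for the heat equation. The one difference is how the key identity $W_\lambda = U_{\lambda^2}$ is obtained: the paper derives it abstractly by noting that $W_\lambda$ and $U_{\lambda^2}$ both solve the resolvent problem \eqref{eq:fl-h2} with parameter $\lambda^2$ and applying uniqueness of that elliptic boundary value problem, which sidesteps all questions of series convergence; you instead obtain it from the explicit spectral expansions $w=\sum_k c_k\mu_k^{-1/2}\sin(\mu_k^{1/2}t)\phi_k$ and $u=\sum_k c_k e^{-\mu_k t}\phi_k$ and term-by-term integration, which is equally valid but requires the convergence bookkeeping you flagged at the end. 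The paper's uniqueness argument is the cleaner of the two, and it is also closer to what the paper actually needs elsewhere (it already sets up the Laplace-transformed resolvent equations for the heat flow in the proof of Theorem \ref{thm:sym}).
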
 

\begin{theorem}[Centrosymmetry result]\label{thm:fw-centrosym}
	Let $s \in (0,1)$ and $w$ be the solution of \eqref{eq:fw}. Let $B_\delta(0)$ be a ball
	centered at the origin with radius $\delta > 0$ such that $B_\delta(0) \subset \Omega$. Let us consider the Laplace transform $W_\lambda(x) = \int_0^\infty e^{-\lambda t}w(x,t) \dd t$ (for $\lambda >0$). Assume that $\Omega$ is a $C^{1,1}$ bounded star-shaped domain with respect to the origin and that $u_0 \in C_0^\infty(\Omega)$, $\supp u_0 \subset B_\delta(0)$, and $u_0(x) = u_0(-x)$ for $x \in B_\delta(0)$. Then, the following conditions are equivalent: 
	\begin{enumerate}
	\item $\nabla_x W_\lambda(0) =0$ for any $\lambda >0$ and any $u_0$ as above;
		\item $\Omega$ is centrosymmetric with respect to the origin (i.e. $x \in \Omega \implies-x \in \Omega$).
	\end{enumerate} 
\end{theorem}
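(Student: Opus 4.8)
The plan is to eliminate the time variable by a Laplace transform and thereby reduce Theorem~\ref{thm:fw-centrosym} to the corresponding statement for the fractional heat equation, Theorem~\ref{thm:centrosym}, so that no new geometric argument is needed.

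\textbf{Step 1 (making the Laplace transform legitimate).} First I would solve \eqref{eq:fw} by separation of variables, as in \cite{MR3462074}: with $\{\phi_k\}$ an $L^2(\Omega)$-orthonormal basis of Dirichlet eigenfunctions of $(-\Delta)^s$ on $\Omega$ and eigenvalues $\mu_k>0$,
\[
w(x,t)=\sum_k \frac{\langle u_0,\phi_k\rangle_{L^2(\Omega)}}{\sqrt{\mu_k}}\,\sin(\sqrt{\mu_k}\,t)\,\phi_k(x).
\]
Since $u_0\in C_0^\infty(\Omega)$, the Poincaré-type inequality used in Proposition~\ref{prop:estimate1} gives $\sum_k \mu_k^{-1}|\langle u_0,\phi_k\rangle|^2<\infty$, so $\sup_{t\ge0}\|w(\cdot,t)\|_{L^2(\Omega)}<\infty$ and $\sup_{t\ge0}\|w(\cdot,t)\|_{\widetilde H^s(\Omega)}<\infty$; interior regularity for $(-\Delta)^s$ further makes $w$ smooth in $x$ on compact subsets of $\Omega$, locally uniformly in $t$. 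Hence $W_\lambda(x)=\int_0^\infty e^{-\lambda t}w(x,t)\dd t$ is well defined for $\lambda>0$, lies in $\widetilde H^s(\Omega)$, and $\nabla_x W_\lambda(0)=\int_0^\infty e^{-\lambda t}\nabla_x w(0,t)\dd t$.

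\textbf{Step 2 (reduction to a resolvent equation).} Next I would apply $\mathcal L$ (the Laplace transform in $t$) to the PDE in \eqref{eq:fw}. Using $w(x,0)=0$ and $\pt w(x,0)=u_0$ we get $\mathcal L[\ptt w]=\lambda^2 W_\lambda-u_0$, and $\mathcal L$ commutes with $(-\Delta)^s$ (by the spectral expansion, or by Fubini against the kernel in \eqref{frac-lap-sg-int}); therefore $W_\lambda$ is the unique $\widetilde H^s(\Omega)$-solution of
\[
(-\Delta)^s W_\lambda+\lambda^2 W_\lambda=u_0 \ \text{ in }\Omega,\qquad W_\lambda\equiv0 \ \text{ in }\R^N\setminus\Omega,
\]
which is exactly problem \eqref{eq:fl-h2} with $h=u_0$ and the parameter $\lambda$ there replaced by $\lambda^2>0$; existence, uniqueness and the bound are supplied by Proposition~\ref{prop:estimate2}, and interior regularity applied to $(-\Delta)^sW_\lambda=u_0-\lambda^2W_\lambda$ makes $W_\lambda$ smooth near the origin, so $\nabla_x W_\lambda(0)$ has a classical meaning.

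\textbf{Step 3 (transfer to the heat case, and the easy direction).} As $\lambda$ runs over $(0,\infty)$ so does $\lambda^2$, hence the family of equations in Step~2 coincides with the family $(-\Delta)^s V_\mu+\mu V_\mu=u_0$, $\mu>0$, obtained by Laplace-transforming the Dirichlet IBVP \eqref{eq:fhe-bc} for the fractional heat equation (there $\pt u+(-\Delta)^su=0$ transforms into $(-\Delta)^sV_\mu+\mu V_\mu=u_0$). Thus condition~(1) of Theorem~\ref{thm:fw-centrosym}, namely $\nabla_x W_\lambda(0)=0$ for all $\lambda>0$ and all admissible even $u_0$, is verbatim the condition shown equivalent to the centrosymmetry of $\Omega$ in the proof of Theorem~\ref{thm:centrosym}, so (1)$\Leftrightarrow$(2) follows. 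The direction (2)$\Rightarrow$(1) can also be seen directly: if $\Omega$ is invariant under $x\mapsto-x$, then since $u_0$ is even and the kernel $|x-y|^{-N-2s}$ is reflection-invariant, the function $x\mapsto W_\lambda(-x)$ solves the same problem as $W_\lambda$, so $W_\lambda$ is even by uniqueness (Proposition~\ref{prop:estimate2}), and the gradient at $0$ of an even $C^1$ function vanishes.

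\textbf{Expected main obstacle.} Steps 1–2 are routine; the real difficulty is entirely inherited from Theorem~\ref{thm:centrosym}, specifically its implication (1)$\Rightarrow$(2). There one represents $W_\lambda(x)=\int_\Omega G(x,y)u_0(y)\dd y$ through the Green's function, deduces from $\nabla_xW_\lambda(0)=0$ for all even $u_0\in C_0^\infty(B_\delta(0))$ that $y\mapsto\nabla_xG(0,y)$ is odd on $B_\delta(0)$, and must then propagate this antisymmetry from a neighborhood of the origin out to all of $\R^N\setminus\Omega$ — using the Cauchy-type / mean-value representations \eqref{Cauchy like formula}–\eqref{eq:mean-value} of $G$, the real-analyticity of $G(x,y)$ off the diagonal, and the star-shapedness of $\Omega$ — so as to force $\R^N\setminus\Omega$, hence $\Omega$, to be centrosymmetric. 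Carrying out that nonlocal propagation is the crux; passing to the wave equation adds nothing to it beyond the bookkeeping of Steps 1–2.
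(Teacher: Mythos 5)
Your proposal is correct and follows essentially the same route as the paper: Laplace-transform \eqref{eq:fw} to get $(-\Delta)^s W_\lambda+\lambda^2 W_\lambda=u_0$, identify $W_\lambda$ with the heat-resolvent $U_{\lambda^2}$ by uniqueness of the elliptic problem (Proposition~\ref{prop:estimate2}), and then transfer the vanishing-gradient condition to Theorem~\ref{thm:centrosym}. This is exactly the content of the paper's ``Relationship between fractional heat and wave equation for the IBVP'' lemma in Section~\ref{sec:proofs-w}. One point you should make explicit in Step~3: the condition in Theorem~\ref{thm:centrosym}(1) is $\nabla_x u(0,t)=0$ for all $t>0$, whereas your chain naturally lands on $\nabla_x U_\mu(0)=0$ for all $\mu>0$; these are equivalent only after writing $\nabla_x U_\mu(0)=\int_0^\infty e^{-\mu t}\nabla_x u(0,t)\,\dd t$ and invoking the injectivity of the Laplace transform, which the paper states explicitly and you leave implicit under the word ``verbatim.'' (Also, strictly speaking the proof of Theorem~\ref{thm:centrosym} works with $v(x)=\int_0^\infty u(x,t)\,\dd t$ rather than with the full resolvent family, so the two conditions are not literally ``verbatim''; the bridge is again Laplace-transform injectivity.) Your Step~1 regularity justification via separation of variables and your direct verification of (2)$\Rightarrow$(1) by uniqueness/evenness both match the paper's reasoning.
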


\begin{theorem}[Stationary zero points and a balance law for the Cauchy problem]\label{thm:1-z}
	Let $s \in (0,1)$, $u_0 \in C^\infty_0(\mathbb R^N)$ with $\supp(u_0) \subset B_L(0)$ for some $L >0$, and $w$ be the solution of the Cauchy problem for the fractional wave equation \eqref{eq:fwCauchy}. Then the following conditions are equivalent: 
	\begin{enumerate}
		\item $w(0,t) = 0$ for any $t >0$; 
		\item $\displaystyle \int_{\mathbb{S}^{N-1}}  u_0(r\omega) \dd \omega = 0$ for any $r \ge 0$. 
	\end{enumerate}
\end{theorem}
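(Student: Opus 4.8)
The plan is to reduce this Cauchy problem for the fractional \emph{wave} equation to the corresponding, already-proved statement for the fractional \emph{heat} equation \eqref{eq:fhe}, i.e. to the zero-points version of Theorem~\ref{thm:1-z} for the heat flow (which asserts that, for $u_0\in C^\infty_0(\R^N)$, the heat solution $u$ satisfies $u(0,\tau)=0$ for all $\tau>0$ if and only if $\int_{\mathbb{S}^{N-1}}u_0(r\omega)\,\dd\omega=0$ for all $r\ge0$), by means of the Laplace transform in time together with a subordination identity. Writing $g(r):=\int_{\mathbb{S}^{N-1}}u_0(r\omega)\,\dd\omega$, we must show $w(0,t)=0$ for all $t>0$ if and only if $g\equiv0$ on $[0,\infty)$.

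First I would solve \eqref{eq:fwCauchy} by the Fourier transform in $x$: one obtains $\widehat{w}(\xi,t)=\widehat{u_0}(\xi)\,(2\pi|\xi|)^{-s}\sin\!\big((2\pi|\xi|)^s t\big)$, so that $w(\cdot,t)\in C^\infty(\R^N)$ because $\widehat{u_0}\in\mathcal S(\R^N)$, and (the multiplier being radial in $\xi$) $w(0,t)=\int_0^\infty k(r,t)\,r^{N-1}g(r)\,\dd r$ for the radial profile $k(\cdot,t)$ of $\mathcal F^{-1}\big((2\pi|\xi|)^{-s}\sin((2\pi|\xi|)^s t)\big)$; in particular $g\equiv0$ already forces $w(0,\cdot)\equiv0$. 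Since $|(2\pi|\xi|)^{-s}\sin((2\pi|\xi|)^s t)|\le(2\pi|\xi|)^{-s}$ and $s<1\le N$ make $\int_{\R^N}|\widehat{u_0}(\xi)|(2\pi|\xi|)^{-s}\,\dd\xi$ finite, the function $t\mapsto w(0,t)$ is bounded, so its Laplace transform $W_\lambda(0):=\int_0^\infty e^{-\lambda t}w(0,t)\,\dd t$ is defined for every $\lambda>0$, and by injectivity of the Laplace transform condition (1) is equivalent to $W_\lambda(0)=0$ for all $\lambda>0$.

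Next I would transform under the integral sign: $\widehat{W_\lambda}(\xi)=\widehat{u_0}(\xi)\,\big(\lambda^2+(2\pi|\xi|)^{2s}\big)^{-1}$, and the elementary identity $\big(\lambda^2+(2\pi|\xi|)^{2s}\big)^{-1}=\int_0^\infty e^{-\lambda^2\tau}e^{-\tau(2\pi|\xi|)^{2s}}\,\dd\tau$ recognizes $\big(\lambda^2+(2\pi|\xi|)^{2s}\big)^{-1}\widehat{u_0}(\xi)$ as $\int_0^\infty e^{-\lambda^2\tau}\,\widehat{P(\cdot,\tau;s)}(\xi)\,\widehat{u_0}(\xi)\,\dd\tau$ via \eqref{eq:k-fourier}. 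Taking inverse Fourier transforms, evaluating at $x=0$, and using $|u(0,\tau)|\le\|u_0\|_\infty$ (which legitimizes Fubini) together with the evenness of $P(\cdot,\tau;s)$, one gets $W_\lambda(0)=\int_0^\infty e^{-\lambda^2\tau}u(0,\tau)\,\dd\tau$, where $u$ solves the fractional heat Cauchy problem \eqref{eq:fhe} with datum $u_0$. As $\tau\mapsto u(0,\tau)$ is bounded and continuous on $(0,\infty)$, substituting $\mu=\lambda^2$ and invoking injectivity of the Laplace transform a second time shows $W_\lambda(0)=0$ for all $\lambda>0$ to be equivalent to $u(0,\tau)=0$ for all $\tau>0$; and the latter, by the heat-flow version of the theorem (Theorem~\ref{thm:1-z} applied to \eqref{eq:fhe}), is equivalent to $g\equiv0$. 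Chaining these equivalences proves the claim.

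The only genuinely delicate points are the Laplace-transform manipulations — checking that $t\mapsto w(0,t)$ has at most exponential (here: bounded) growth so that $W_\lambda(0)$ makes sense, and justifying the interchange of the $\tau$-integral with the inverse Fourier transform in Step~3 — both of which are routine given $u_0\in C_0^\infty(\R^N)$ (equivalently $\widehat{u_0}\in\mathcal S(\R^N)$) and the $L^\infty$-contractivity $\|u(\cdot,\tau)\|_\infty\le\|u_0\|_\infty$ of the fractional heat semigroup. Everything else is bookkeeping; in particular the same argument, transforming $\nabla_x w$ in place of $w$ and reducing to Theorem~\ref{thm:1} instead of its zero-points analogue, simultaneously yields Theorem~\ref{thm:fw-1}.
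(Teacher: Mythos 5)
Your proof is correct and follows essentially the same route as the paper: reduce the wave problem to the already-established heat-flow result via the Laplace transform in time, using the key relation $W_\lambda(0)=\int_0^\infty e^{-\lambda^2\tau}u(0,\tau)\,\dd\tau$ and injectivity of the Laplace transform. The only cosmetic difference is that you derive this identification directly on the Fourier side (via $\widehat{W_\lambda}=\widehat{u_0}\,(\lambda^2+(2\pi|\xi|)^{2s})^{-1}=\widehat{U_{\lambda^2}}$ and the subordination identity), whereas the paper obtains $W_\lambda=U_{\lambda^2}$ from uniqueness of the Laplace-transformed resolvent problem — the two verifications are interchangeable.
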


\begin{theorem}[Radial symmetry result]\label{thm:fw-sym-z}
	Let $s \in (0,1)$ and $w$ be the solution of \eqref{eq:fw}.  Let $B_\delta(0)$ be a ball
	centered at the origin with radius $\delta > 0$ such that $B_\delta(0) \subset \Omega$.  Let us consider the Laplace transform $W_\lambda(x) = \int_0^\infty e^{-\lambda t}w(x,t) \dd t$ (for $\lambda >0$). Assume that $\Omega$ is a $C^{1,1}$ bounded domain and that $u_0 \in C_0^\infty(\Omega)$,  $\supp u_0 \subset B_\delta(0)$, and that the balance law 
	\begin{align*}
	\int_{\mathbb{S}^{N-1}}  u_0(r\omega) \dd \omega = 0
	\end{align*}
	holds for any $r \in (0,\delta)$. 
	Then the following conditions are equivalent: 
	\begin{enumerate}
		\item $W_\lambda(0) =0$ for  any $\lambda >0$ and any $u_0$ as above;
		\item  $\Omega = B_R(0)$ for some $R>0$. 
	\end{enumerate}
\end{theorem}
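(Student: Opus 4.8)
The plan is to pass to the Laplace transform in time, which turns \eqref{eq:fw} into a fractional resolvent problem and reduces Theorem~\ref{thm:fw-sym-z} to exactly the elliptic statement underlying the proof of Theorem~\ref{thm:sym-z}. Let $(\lambda_k,\varphi_k)_{k\ge1}$ be the eigenpairs of $(-\Delta)^s$ on $\Omega$ with homogeneous exterior Dirichlet condition and set $c_k=\langle u_0,\varphi_k\rangle$. Solving \eqref{eq:fw} by separation of variables (as for \eqref{eq:fhe-bc}, cf.\ the discussion after \eqref{eq:fw}) gives $w(x,t)=\sum_{k}c_k\lambda_k^{-1/2}\sin(\sqrt{\lambda_k}\,t)\,\varphi_k(x)$. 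Since $u_0\in C_0^\infty(\Omega)$ is supported well inside $\Omega$ the coefficients $c_k$ decay rapidly, so the series is bounded in $t$ and may be Laplace transformed term by term; using $\int_0^\infty e^{-\lambda t}\sin(at)\dd t=a(\lambda^2+a^2)^{-1}$ one obtains $W_\lambda(x)=\sum_k c_k(\lambda^2+\lambda_k)^{-1}\varphi_k(x)$. Equivalently, $W_\lambda\in\widetilde{H}^s(\Omega)$ is the unique solution of $(-\Delta)^sW_\lambda+\lambda^2W_\lambda=u_0$ in $\Omega$ with $W_\lambda=0$ in $\R^N\setminus\Omega$, i.e.\ problem \eqref{eq:fl-h2} with parameter $\lambda^2$ (solvability and the estimate being furnished by Proposition~\ref{prop:estimate2}).

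With this identification, condition (1) --- that $W_\lambda(0)=0$ for all $\lambda>0$ and all admissible $u_0$ --- becomes, since $\lambda\mapsto\lambda^2$ maps $(0,\infty)$ onto $(0,\infty)$, the requirement that the resolvent $V_\mu:=((-\Delta)^s+\mu)^{-1}u_0$ satisfies $V_\mu(0)=0$ for all $\mu>0$ and all admissible $u_0$. This is precisely the condition obtained by Laplace transforming the fractional heat flow in Theorem~\ref{thm:sym-z}; hence the equivalence (1)$\iff$(2) will follow from the elliptic part of that proof, whose structure I would recall for completeness. For (2)$\implies$(1): when $\Omega=B_R(0)$, rotation invariance of $(-\Delta)^s$ and linearity show that $V_\mu(0)$ depends on $u_0$ only through the spherical integrals $r\mapsto\int_{\mathbb{S}^{N-1}}u_0(r\omega)\dd\omega$, which vanish by the balance law, so $V_\mu(0)=0$. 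For (1)$\implies$(2): write $V_\mu(x)=\int_\Omega G_\mu(x,y)\,u_0(y)\dd y$, where $G_\mu$ is the Green's function of $(-\Delta)^s+\mu$ on $\Omega$, real analytic off the diagonal as in Proposition~\ref{lm:prob-formula}; condition (1) forces $\int_{B_\delta(0)}G_\mu(0,y)\,u_0(y)\dd y=0$ for all $u_0\in C_0^\infty(B_\delta(0))$ with vanishing spherical integrals and all $\mu>0$, and testing against an $L^2$-dense family of such $u_0$ on each annulus, together with analyticity, yields that $y\mapsto G_\mu(0,y)$ is radial on $B_\delta(0)$ for every $\mu>0$. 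One then propagates this radial symmetry outward to $\partial\Omega$ using the mean-value/Poisson-kernel representation of $G_\mu(0,\cdot)$ on balls contained in $\Omega$ --- the analogue for $(-\Delta)^s+\mu$ of \eqref{Cauchy like formula} --- together with the construction representing $G(0,\cdot)$ in Step~3 of the proof of Theorem~\ref{thm:sym}, concluding that $\partial\Omega$ is a sphere centered at $0$, i.e.\ $\Omega=B_R(0)$.

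The routine parts are the Laplace-transform bookkeeping (justifying the term-by-term integration and the coincidence $W_\lambda=V_{\lambda^2}$) and the density/analyticity argument that makes $G_\mu(0,\cdot)$ radial near the origin. The main obstacle --- as already in the local case and in Theorem~\ref{thm:sym} --- is the geometric rigidity implication (1)$\implies$(2): upgrading the purely local information that $G_\mu(0,\cdot)$ is radial on $B_\delta(0)$ for all $\mu>0$ to the global conclusion that $\Omega$ is a centered ball. This step is delicate precisely because of nonlocality: $G_\mu(0,\cdot)$ sees all of $\Omega$, so a local unique-continuation or moving-plane device is not available, and it is here that the new representation of $G(0,\cdot)$ constructed in Step~3 of Theorem~\ref{thm:sym} does the work.
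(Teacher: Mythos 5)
Your top-level reduction is exactly the paper's: identify $W_\lambda$ with $U_{\lambda^2}$ via the Laplace transform, observe that $\lambda\mapsto\lambda^2$ is a bijection of $(0,\infty)$, and invoke injectivity of the Laplace transform to translate $W_\lambda(0)=0$ for all $\lambda>0$ into $u(0,t)=0$ for all $t>0$, so that the claim follows from Theorem~\ref{thm:sym-z}. Your rotation-averaging argument for (2)$\implies$(1) is a valid and arguably tidier alternative to the paper's route (which passes through the claim that the Green's operator preserves the balance law and a Neumann-series / iteration argument); both buy the same conclusion.

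Where you go astray is in the ``recall for completeness'' of the (1)$\implies$(2) direction. You work with the Green's function $G_\mu$ of $(-\Delta)^s+\mu$ for \emph{all} $\mu>0$, and then try to propagate radial symmetry outward using ``the mean-value/Poisson-kernel representation of $G_\mu(0,\cdot)$'' and, worse, ``the construction representing $G(0,\cdot)$ in Step~3 of the proof of Theorem~\ref{thm:sym}.'' Two problems. First, the Poisson-kernel identity \eqref{Cauchy like formula} and the real analyticity in Proposition~\ref{lm:prob-formula} are established only for $(-\Delta)^s$, i.e.\ $\mu=0$; there is no analogue of \eqref{Cauchy like formula} for $(-\Delta)^s+\mu$ in the paper, and $s$-mean-value identities do not apply to solutions of $((-\Delta)^s+\mu)h=0$. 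Second, and more to the point, the Step~3 construction in Theorem~\ref{thm:sym} (linearly independent directions $f_i$, the matrix $M(r)$, the determinant argument) is the machinery designed for the \emph{gradient} (critical-point) case; it is not what carries the scalar (zero-point) case. The paper's proof of Theorem~\ref{thm:sym-z} uses only $v(x)=\int_0^\infty u(x,t)\dd t$, i.e.\ $\mu=0$: condition (1) forces $G(0,y)=m(|y|)$ on $B_\delta(0)\setminus\{0\}$; by real analyticity of $G(0,\cdot)$ in $\Omega\setminus\{0\}$ this radial symmetry extends to $\bar B_{R^*}(0)$, where $R^*$ is the radius of the largest ball inscribed in $\Omega$; and then the facts $G(0,\cdot)>0$ in $\Omega$, $G(0,\cdot)=0$ on $\R^N\setminus\Omega$, together with $G(0,P)=0$ at the touching point $P\in\partial B_{R^*}(0)\cap\partial\Omega$, immediately force $\Omega=B_{R^*}(0)$. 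So your characterization of the ``main obstacle'' as the geometric rigidity needing Step~3 of Theorem~\ref{thm:sym} is a misdiagnosis for this scalar version: the delicate determinant/analyticity machinery is not needed here, and your invocation of unestablished properties of $G_\mu$ for $\mu>0$ is an unnecessary gap. If you simply cite Theorem~\ref{thm:sym-z} after the Laplace-transform identification, as the paper does, the proof is complete.
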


\begin{theorem}[Centrosymmetry result]\label{thm:fw-centrosym-z}
	Let $s \in (0,1)$ and $w$ be the solution of \eqref{eq:fw}. Let $B_\delta(0)$ be a ball
	centered at the origin with radius $\delta > 0$ such that $B_\delta(0) \subset \Omega$.  Let us consider the Laplace transform $W_\lambda(x) = \int_0^\infty e^{-\lambda t}w(x,t) \dd t$ (for $\lambda >0$). Assume that $\Omega$ is a $C^{1,1}$ bounded star-shaped domain with respect to the origin and that  $u_0 \in C_0^\infty(\Omega)$, $\supp u_0 \subset B_\delta(0)$, and $u_0(x) = - u_0(-x)$ for $x \in B_\delta(0)$. Then, the following conditions are equivalent: 
	\begin{enumerate}
		\item  $W_\lambda(0) =0$ for  any $\lambda >0$ and any $u_0$ as above;
		\item $\Omega$ is centrosymmetric with respect to the origin (i.e. $x \in \Omega \implies-x \in \Omega$).
	\end{enumerate} 
\end{theorem}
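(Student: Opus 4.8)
The plan is to turn \eqref{eq:fw} into a stationary resolvent equation via the Laplace transform, and then to mimic, with $\lambda$ replaced by $\lambda^2$, the proof of Theorem~\ref{thm:centrosym-z}; the only substantial new ingredient (the same one needed there) is the propagation of a pointwise symmetry of the Green's function from a small ball to all of $\Omega$. Concretely: taking the Laplace transform in $t$ in \eqref{eq:fw} and using $w(x,0)=0$, $\partial_t w(x,0)=u_0$ (and that $w$ is bounded in $t$, e.g. via its eigenfunction expansion), one finds that $W_\lambda\in\widetilde H^s(\Omega)$ is the unique solution of $(-\Delta)^s W_\lambda+\lambda^2 W_\lambda=u_0$ in $\Omega$ with $W_\lambda=0$ on $\R^N\setminus\Omega$ — that is, problem \eqref{eq:fl-h2} with $\lambda\rightsquigarrow\lambda^2>0$, so Proposition~\ref{prop:estimate2} applies. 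Writing $G_\mu$ for the Green's function of $(-\Delta)^s+\mu$ on $\Omega$ with zero exterior data, we get $W_\lambda(0)=\int_{B_\delta(0)}G_{\lambda^2}(0,y)u_0(y)\dd y$ since $\supp u_0\subset B_\delta(0)$. The implication $(2)\Rightarrow(1)$ is then easy: if $\Omega=-\Omega$, uniqueness of the Green's function for the reflection-invariant operator $(-\Delta)^s+\lambda^2$ gives $G_{\lambda^2}(x,y)=G_{\lambda^2}(-x,-y)$, so $y\mapsto G_{\lambda^2}(0,y)$ is even; as $u_0$ is odd and $B_\delta(0)$ is symmetric, the integrand is odd and $W_\lambda(0)=0$ for every admissible $u_0$ and every $\lambda>0$.

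For $(1)\Rightarrow(2)$, first test the vanishing of $W_\lambda(0)$ against $\tfrac12(\phi(\cdot)-\phi(-\cdot))$ for arbitrary $\phi\in C^\infty_0(B_\delta(0))$: this forces the odd part in $y$ of $G_{\lambda^2}(0,\cdot)$ to vanish on $B_\delta(0)$, i.e. $G_{\lambda^2}(0,y)=G_{\lambda^2}(0,-y)$ on $B_\delta(0)\setminus\{0\}$ for every $\lambda>0$. Letting $\lambda\to0^+$ (the kernels $G_\mu$ converge pointwise off the diagonal to $G:=G_0$ as $\mu\downarrow0$, e.g. by the resolvent identity together with nonnegativity of the kernels) yields $G(0,y)=G(0,-y)$ on $B_\delta(0)\setminus\{0\}$. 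Now $y\mapsto G(0,y)$ and $y\mapsto G(0,-y)$ are real analytic, respectively on $\Omega\setminus\{0\}$ and on $(-\Omega)\setminus\{0\}$, by \eqref{Cauchy like formula} and the remark following Proposition~\ref{lm:prob-formula}; hence both are real analytic on the open connected set $(\Omega\cap(-\Omega))\setminus\{0\}$, connected because $\Omega\cap(-\Omega)$ is again star-shaped with respect to the origin (for $N=1$ treat the two half-line components separately). By the identity theorem, $G(0,y)=G(0,-y)$ for all $y\in(\Omega\cap(-\Omega))\setminus\{0\}$.

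It remains to upgrade this to $\Omega=-\Omega$. Write $\Omega=\{r\omega:0\le r<\rho(\omega)\}$ with $\rho\colon\mathbb{S}^{N-1}\to(0,\infty)$ continuous (since $\partial\Omega$ is $C^{1,1}$ and $0$ is interior), so $-\Omega$ has radial function $\omega\mapsto\rho(-\omega)$. If $\Omega\neq-\Omega$, pick $\omega_0$ with, say, $\rho(\omega_0)<\rho(-\omega_0)$. For $0<r<\rho(\omega_0)$ we have $r\omega_0\in\Omega\cap(-\Omega)$, hence $G(0,r\omega_0)=G(0,-r\omega_0)$; letting $r\uparrow\rho(\omega_0)$, the left side tends to $0$ because $\rho(\omega_0)\omega_0\in\partial\Omega$ and $G(0,\cdot)$ vanishes continuously at the $C^{1,1}$ boundary, while the right side tends to $G(0,-\rho(\omega_0)\omega_0)>0$ because $-\rho(\omega_0)\omega_0$ is interior to $\Omega$ (its norm $\rho(\omega_0)$ is strictly below $\rho(-\omega_0)$) and $G>0$ on $\Omega\times\Omega$ off the diagonal — a contradiction. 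Hence $\rho(\omega)=\rho(-\omega)$ for all $\omega$, i.e. $\Omega$ is centrosymmetric.

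The crux is the second and third steps: converting the purely local datum "$G(0,\cdot)$ is even near the origin" into the global symmetry $\Omega=-\Omega$. This rests on real analyticity of the fractional Green's function off its pole — a by-product of the Poisson-type reproducing identity \eqref{Cauchy like formula}, and the point where nonlocality is genuinely used — together with star-shapedness, which forces the analytic continuation to reach the boundary; this parallels the role played by the representation of $\nabla_x G(0,y)$ in the proof of Theorem~\ref{thm:sym}. The remaining ingredients (the Laplace transform reduction, Proposition~\ref{prop:estimate2}, the boundary decay and strict positivity of $G$) are standard, and the analogous statements for $\nabla_x W_\lambda(0)=0$ in Theorems~\ref{thm:fw-1}–\ref{thm:fw-sym-z} follow by the same scheme, replacing odd test functions by those satisfying the balance law and "even" by "gradient vanishing at $0$".
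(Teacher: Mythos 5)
Your proof is correct, but it departs from the paper's route in a substantive way. The paper disposes of Theorem~\ref{thm:fw-centrosym-z} indirectly: a Laplace-transform lemma shows that $W_\lambda \equiv U_{\lambda^2}$ (with $U_\mu$ the transform of the heat solution), so that $W_\lambda(0)=0$ for all $\lambda>0$ is equivalent to $u(0,t)=0$ for all $t>0$, and one then invokes Theorem~\ref{thm:centrosym-z}. The engine of the latter's $(1)\Rightarrow(2)$ direction is a \emph{unique continuation / reflection} argument: after establishing $h:=G(0,\cdot)-G(0,-\cdot)\equiv0$ on $\Omega\cap\Omega^*$, the paper shows that $h$ solves an $s$-harmonic Dirichlet problem on $(\Omega\cup\Omega^*)\setminus\overline{\Omega\cap\Omega^*}$ (citing a gluing observation on unions of domains from the literature) and hence vanishes identically, which then propagates by analyticity to contradict the singularity of $G$ at the origin. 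You instead obtain the same local identity $G(0,y)=G(0,-y)$ on $B_\delta(0)\setminus\{0\}$ by first passing to $G_{\lambda^2}$ and then taking $\lambda\downarrow0$, extend it to $\Omega\cap(-\Omega)$ by the real analyticity and star-shapedness, and then derive the contradiction by purely \emph{boundary-behaviour} considerations along a single ray: $G(0,\cdot)$ decays to $0$ as $r\omega_0\to\partial\Omega$ while $G(0,-r\omega_0)$ stays bounded away from $0$ because $-\rho(\omega_0)\omega_0$ is an interior point. That argument is closer in spirit to what the paper does in the \emph{radial} zero-point theorem (Theorem~\ref{thm:sym-z}, Step~2 of $(1)\Rightarrow(2)$, where positivity inside and vanishing outside are used directly) than to the paper's own centrosymmetry proof. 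The trade-off is: the paper's unique-continuation route gives a slightly more structural statement (the antisymmetric part of the Green's function solves an $s$-harmonic Dirichlet problem) but relies on a delicate nonlocal gluing observation; your route is more elementary, needing only the radial parametrization of a star-shaped domain, boundary decay, and interior positivity of $G$, plus the (correct) resolvent limit $G_\mu\uparrow G$ as $\mu\downarrow0$ to return to the analyticity statement recorded after Proposition~\ref{lm:prob-formula}. Both approaches are valid; note that continuity of $\rho$ is not actually needed in your last step — only the radial representation of $\Omega$, which is automatic from star-shapedness with $0$ interior.
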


\section{Proofs of the main results}
\label{sec:proofs}

\subsection{Critical points of the solution of the Cauchy problem}
\label{ssec:cauchy}



\begin{proof}[Proof of Theorem \ref{thm:1}]

%
	
	
	As a consequence of 
	\cite[Theorem 1.5]{MR3413864}, we have 
	$$ \partial_{x_j} P^{N}(x,t;s) = -2\pi x_j \, P^{N+2}(\tilde x,t;s),$$
	where $x = (x_1, \ldots, x_n) \in \mathbb R^N$, $\tilde x = (x_1, \ldots, x_n, 0, 0) \in \mathbb R^{N+2}$, and $P^{N}(x,t;s)$ is the heat kernel for $(-\Delta)^s$ in $N$ space dimensions. 
	Moreover, $P^{N+2}(\tilde x, 1;s) = \Phi_s(\tilde x)$ and, by \cite[Eq. (16.14)]{MR3916700}. We have 
	\begin{align}
	P^{N+2}(\tilde x, 1;s) = \frac{2\pi}{|x|^{\tfrac{N}{2}}} \int_0^\infty e^{-(2\pi \rho)^{2s}} \rho^{\tfrac{N}{2}+1} J_{\tfrac{N}{2} }(2\pi |x|\rho) \dd \rho,
	\end{align}
	where, by \cite[Eq. (4.23), p. 22]{MR3916700},  
	\begin{align}\label{eq:J}
	J_{\frac{N}{2}}(z) = \sum_{k=0}^\infty (-1)^k \frac{(z/2)^{N/2+2k}}{\Gamma(k+1)\Gamma(k+N/2+1)}, 
	\end{align}
	for $|z|< \infty$ and $|\arg z|< \pi$.
	For $u_0\in L^\infty(\mathbb R^N)$ with $\supp u_0 \subset B_L(0)$, we let 
	$$
	A(r)=\int_{\mathbb S^{N-1}}\omega u_0(r\omega)\dd\omega\quad \mbox{ for }r\in [0,L].
	$$
	Hence, we compute
	\begin{align*}
	\nabla_x u(0,t) &= 2\pi \int_{\R^N} x P^{N+2}(\tilde x, t;s) u_0(\tilde x) \dd \tilde x
	\\&=  2\pi  t^{-\frac{N+2}{2s}} \int_{\R^N} x P^{N+2}\left(\frac{\tilde x}{t^{\frac{1}{2s}}}, 1;s \right) u_0(\tilde x) \dd \tilde x
	\\&=  2\pi  t^{-\frac{N+2}{2s}} \int_0^L r^N P^{N+2}(hr,1;s) A(r) \dd r\\ 
	&=  (2\pi)^2  t^{-\frac{N+2}{2s}+
	\frac{N}{4s}} \int_0^L r^\frac{N}{2} \left(\int_0^\infty e^{-(2\pi \rho)^{2s}}\rho^{\tfrac{N}{2}+1} J_{\tfrac{N}{2} }(2\pi hr\rho) \dd \rho  \right) A(r) \dd r,
	\end{align*}
	where we set $h:= t^{-1/2s}$ and note that $0 < hr < hL$. Thus, by \eqref{eq:J}
	\begin{align*}
	&\int_0^\infty e^{-(2\pi \rho)^{2s}}\rho^{\tfrac{N}{2}+1} J_{\tfrac{N}{2} }(2\pi hr\rho) \dd \rho 
	\\&=(\pi h)^{N/2} r^{N/2} \sum_{k=0}^\infty (-1)^k \frac{1}{\Gamma(k+1)\Gamma(k+N/2+1)}(\pi h)^{2k} r^{2k}C_k, 
	\end{align*}
	where we set $C_k=\int_0^\infty e^{-(2\pi \rho)^{2s}} \rho^{N+1+2k} \dd \rho > 0.$
	Therefore, $\nabla_x u(0,t) = 0$ for all $t>0$ if and only if 
	\begin{align*}
\sum_{k=0}^\infty (\pi h)^{2k} (-1)^k \frac{C_k}{\Gamma(k+1)\Gamma(k+N/2+1)}\int_0^L r^{2k} A(r) r^N \dd r = 0  \text{ for all $h > 0$,} 
	\end{align*}
that is, 
\begin{align*}
\int_0^L r^{2k} (A(r) r^N) \dd r = 0 \quad \text{ for all $k \ge 0$,}
\end{align*}	
which is equivalent to 
\begin{align*}
A(r) = 0 \quad \text{ for any  $r \in [0,L]$.}	
\end{align*}

\end{proof}

\subsection{Radial symmetry result for the IBVP}
\label{ssec:radial_sym}

We now prove Theorem \ref{thm:sym}. For the first implication, we use an approach based on the Laplace transform and the properties of the operator $I + \lambda G$, with $\lambda >0$. For the second implication, we argue by contradiction by relying on the properties of the fundamental solution of the fractional heat equation. 


\begin{proof}[Proof of Theorem \ref{thm:sym}]
	\textbf{((2)$\implies$(1))} 
	\textbf{Step 1:} \emph{The Green's operator preserves the balance law.} Let us assume that $\Omega = B_R(0)$ and that the initial data $u_0$ satisfies the balance law.  As a first step towards the proof of the result, we show that $\int_{\S^{N-1}} \omega u_0 \dd \omega = 0$ implies  $\int_{\S^{N-1}} \omega Gu_0 \dd \omega = 0$, where $G$ is  the Green operator which sends $u_0$ into $Gu_0$ as in Remark \ref{remark meaning of Green operator}. To this end, we compute as follows (using the notation $G(x,y) = \hat G(|x-y|,|x|,|y|)\,$): for $x = \rho \omega$, and $0 < \rho \le R$, 
	\begin{align*}
	&\int_{\S^{N-1}} \omega (Gu_0)(\rho\omega) \dd \omega =\int_{\S^{N-1}} \omega \int_{B_R(0)} u_0(y) \hat G(\sqrt{\rho^2 + |y|^2 - 2 \rho \omega \cdot y}, \rho, |y|) \dd y \dd \omega \\&= \int_{B_R(0)} u_0(y) \int_{\S^{N-1}} \omega \hat G(\sqrt{\rho^2+|y|^2-2\rho \omega \cdot y}, \rho, |y|) \dd \omega \dd y\\&=
	\int_0^R s^{N-1} \dd s \int_{\S^{N-1}} \dd \eta \, u_0(s\eta) \int_{\S^{N-1}} \omega \hat G(\sqrt{\rho^2+s^2-2\rho s \omega \cdot \eta}, \rho, s) \dd \omega, 
	\end{align*}
	where we wrote $y = s\eta$, for $\eta \in \S^{N-1}$, $0 < s \le R$ in the last line. 
	
	Using the decomposition $\omega = (\omega \cdot \eta) \eta + \gamma$ with an integral identity of \cite[Eq. (1.2), p. 8]{MR2098409}, we have 
	\begin{align*}
	&\int_{\S^{N-1}} \omega \hat G(\sqrt{\rho^2 + s^2 - 2 \rho s \omega \cdot \eta}, \rho, s) \dd \omega
	\\&= \eta\int_{\S^{N-1}} (\omega \cdot\eta) \hat G(\sqrt{\rho^2 + s^2 - 2 \rho s \omega \cdot \eta}, \rho, s) \dd \omega
	\\&\quad + \underbrace{\int_{\S^{N-1}} \gamma \hat G(\sqrt{\rho^2 + s^2 - 2 \rho s \omega \cdot \eta}, \rho, s) \dd \omega}_{=0 \text{ by symmetry}}
	\\&= \eta |\S^{N-2}| \int_{-1}^1 (1-\lambda^2)^\frac{N-3}{2} \lambda \hat G(\sqrt{\rho^2 + s^2 - 2 \rho s \lambda}, \rho, s) \dd \lambda.
	\end{align*}
	As a result, 
	\begin{align*}
	&\int_{\S^{N-1}} u_0(s\eta) \eta |\S^{N-2}| \int_{-1}^{1} (1-\lambda^2)^{\frac{N-3}{2}}\lambda \hat G(\sqrt{\rho^2 + s^2 - 2 \rho s \lambda}, \rho, s) \dd \lambda \dd \eta \\
	&= |\S^{N-2}| \int_{-1}^{1} (1-\lambda^2)^{\frac{N-3}{2}} \lambda \hat G(\sqrt{\rho^2+s^2-2\rho s \lambda}, \rho, s) \dd \lambda \underbrace{\int_{\S^{N-1}} u_0(s\eta) \eta \dd \eta}_{= 0} 
	\\ &= 0,
	\end{align*}
	which yields that $\int_{\S^{N-1}} \omega (Gu_0)(\rho \omega) \dd \omega = 0$. 	
	
	\textbf{Step 2:} \emph{Reduction to an elliptic problem.}	For $\lambda >0$, we apply the Laplace transform to the solution of the fractional heat equation and obtain that $v_\lambda(x) = \int_0^\infty e^{-\lambda t}u(x,t) \dd t$ solves  
	\begin{align}\label{eq:fl-vl}
	\begin{cases}
	(-\Delta)^s v_\lambda(x) + \lambda v_\lambda(x) = u_0(x), & x \in \Omega, \\
	v_\lambda(x) = 0, & x \in \R^N \setminus \Omega.
	\end{cases}
	\end{align}
	Using the Green's operator of the fractional Laplacian on a ball, we deduce 
	\begin{align*}
	v_\lambda = G(u_0) - \lambda G v_\lambda, 
	\end{align*}
	that is, 
	\begin{align*}
	(I+\lambda G) v_\lambda = Gu_0. 
	\end{align*}
	
	\textbf{Step 3:} \emph{Balance law via Neumann series.} We claim that $(I + \lambda G)$ is invertible for $\lambda>0$ and
$
	v_\lambda = (I+\lambda G)^{-1} Gu_0.
$
	To prove this, we argue as follows. It follows from Proposition \ref{prop:estimate1} that there exists $\lambda_0 >0$ such that 
\begin{align*}
v_\lambda = (I+\lambda G)^{-1}Gu_0=\sum_{k=0}^\infty (-\lambda)^{k}G^{k+1}u_0
\end{align*}
for every $0 < \lambda \le \lambda_0$. Then, by Step 1, since $u_0$ satisfies the balance law, we have that $\int_{\S^{N-1}} \omega v_\lambda(\rho\omega)\dd\omega = 0$ for every $0 < \rho \le R$ and for every $0 <\lambda\le \lambda_0$.

 For $\lambda >0$, we write \eqref{eq:fl-vl} as 
	\begin{align}\label{eq:fl-vl2}
\begin{cases}
(-\Delta)^s v_\lambda(x) + \lambda_0 v_\lambda(x) = u_0 - (\lambda - \lambda_0)v_\lambda(x), & x \in \Omega, \\
v_\lambda(x) = 0, & x \in \R^N \setminus \Omega.
\end{cases}
\end{align}

Let $G_{\lambda_0}: u_0 \mapsto v_{\lambda_0}=:G_{\lambda_0} u_0$ where $v_{\lambda_0}$ is the solution of \eqref{eq:fl-vl} with $\lambda = \lambda_0$. Then we can write 
\begin{align*}
v_\lambda &= G_{\lambda_0}(u_0-(\lambda-\lambda_0)v_\lambda) \\
&=G_{\lambda_0} u_0 - (\lambda - \lambda_0) G_{\lambda_0}v_\lambda;
\end{align*}
hence
\begin{align*}
 (I+(\lambda - \lambda_0)G_{\lambda_0}) v_\lambda = G_{\lambda_0} u_0. 
\end{align*}
By Proposition \ref{prop:estimate2}, we have 
\begin{align*}
v_\lambda = (I+(\lambda-\lambda_0)G_{\lambda_0})^{-1} G_{\lambda_0} u_0 &= \sum_{k=0}^\infty (\lambda_0-\lambda)^{k} G_{\lambda_0}^{k+1} u_0
\end{align*}
for every $\lambda_0 < \lambda \le 2\lambda_0$. Then, since we have seen that $G_{\lambda_0} u_0$ satisfies the balance law, we have that $\int_{\S^{N-1}} \omega v_\lambda(\rho\omega)\dd\omega = 0$ for every $0 < \rho \le R$ and for every $\lambda_0 < \lambda \le 2 \lambda_0$. Iterating the argument, eventually we obtain that the solution $v$ of \eqref{eq:fl-vl} with $\lambda >0$ preserves the balance law: i.e.,  
\begin{align*}
\int_{\S^{N-1}} \omega v_\lambda(\rho\omega) \dd \omega = 0.
\end{align*}

\textbf{Step 4:} \emph{Conclusion of the proof.} Assuming $\int_{\S^{N-1}} \omega u_0 \dd \omega = 0$, Steps 1-3 give that, for every $0 < \rho \le R$ and every $\lambda >0$, 
\begin{align*}
\int_{\S^{N-1}} \omega v_\lambda(\rho\omega ) \dd \omega = 0,
\end{align*}
which shows, by the injectivity of the Laplace transform, that 
\begin{align*}
\int_{\S^{N-1}} \omega u(\rho\omega,t) \dd \omega = 0,
\end{align*}
	for every $0 < \rho \le R$ and $t \ge 0$. 
 
 	\textbf{((1)$\implies$(2))} \textbf{Step 1:} \emph{Choice of initial data and symmetry properties of $\nabla_x G(0,y)$.} Let 
	\begin{align}\label{function v}
	v(x)=\int_0^\infty u(x,t) \dd t.
	\end{align}
	Then $v$ solves
	\begin{align}\label{eq:fl-vl-zero-lambda}
	\begin{cases}
	(-\Delta)^s v(x)  = u_0(x), & x \in \Omega, \\
	v(x) = 0, & x \in \R^N \setminus \Omega.
	\end{cases}
	\end{align}
	Let $G=G(x,y)$ be the Green's function of the fractional Laplacian on $\Omega$. Then 
	\begin{align}\label{representation by Green's function of v}
	v(x)=\int_{B_\delta(0)}G(x,y) u_0(y) dy.
	\end{align}
	Let us consider as initial data $u_0(x) = \eta(|x|)\psi(x/|x|)$, where $\eta:\R \to \R$ is a smooth function with support in $(0,\delta)$ and $\psi$ on $\S^{N-1}$ satisfies the balance law $\int_{\S^{N-1}} \omega\psi(\omega) \dd \omega = 0$. 
	Then, since $\nabla_x u(0,t)=0$ for any $t > 0$,  
	\begin{align*}
	0 &= \int_{B_\delta(0)} \nabla_x G(0,y) \eta(|y|)\psi(y/|y|) \dd y \\
	&=\int_0^\delta \eta(r)\left(r^{N-1} \int_{\S^{N-1}} \nabla_x G(0,r\omega) \psi(\omega) \dd \omega \right) \dd r,
	\end{align*}
	which implies 
	\begin{align*}
\int_{\S^{N-1}} \nabla_x G(0,r\omega) \psi(\omega)  \dd \omega = 0 \text{ for any $r \in (0,\delta)$},
\end{align*}
and hence
\begin{align} \label{eq:M(r)}
\nabla_x G(0,y) = M(r) \omega \quad  \text{ for any  $y = r\omega \in B_\delta(0) \setminus \{0\}$},
\end{align}
where $M(r)$ is a $N\times N$ matrix-valued function in $r=|y|$ and $\omega \in \S^{N-1}$.
Note that $\nabla_x  G(0,y)$ is real-analytic in $y \in \Omega \setminus \{0\}$ (by Proposition \ref{lm:prob-formula}) and $\nabla_x G(0,y) \equiv 0$ for every $y \in \R^N \setminus \Omega$. 

\textbf{Step 2:} \emph{Setting up the argument by contradiction.} 
For every direction $\omega \in \S^{N-1}$, there exists $R(\omega) >\delta$ such that the line segment $\ell_\omega := \{r\omega \in \R^N: 0 \le r < R(\omega)\}$ is contained in $\Omega$ and $R(\omega)\omega \in \partial \Omega$. Moreover, $\nabla_x G(0,r\omega)$ is real analytic in $r \in (0,R(\omega))$ and $\nabla_x G(0,R(\omega)\omega) = 0$ for every $\omega \in \S^{N-1}$. Let $B_{R^*}(0) \subset \Omega$ and $P \in \bar B_{R^*}(0) \cap \partial \Omega $ for some $P=R^*\omega^* = R(\omega^*) \omega^* \in \partial \Omega$. We claim that $B_{R^*}(0) = \Omega$. Let us suppose, for the sake of finding a contradiction, that $B_{R^*} \subsetneqq \Omega$. 

\textbf{Step 3:} \emph{A construction to represent $\nabla_x G(0,y)$.} 
Since $B_{R^*} \subsetneqq \Omega$, there exists a ball $B_\eps(Q)$ such that $\bar B_{\eps}(Q) \subset \Omega$ for some $Q \in \partial B_{R^*}(0)\cap \Omega$ and $\eps >0$  (to be chosen small enough). Let us choose a set of linearly independent vectors $f_1, \dots, f_N \in \S^{N-1}$ satisfying 
\begin{align}
(R^* + \tfrac{\eps}{2})f_i \in \partial B_{R^* +\eps/2}(0) \cap \partial B_\eps(Q), \quad i = 1, \dots, N.
\end{align} 
Then every $\omega \in \S^{N-1}$ is represented by $ \omega = \sum_{i=1}^N \eta_i f_i$ for a unique $(\eta_1,\cdots,\eta_N)\in\mathbb R^N$.
If $0 < r < \delta$, then
\begin{align*}
\nabla_x G(0,r\omega) = M(r) \omega &= \sum_{i=1}^N \eta_i M(r) f_i = \sum_{i=1}^N \eta_i \nabla_x G(0,rf_i),
\end{align*}
namely \begin{align}\label{eq:G2}
\nabla_x G(0,r\omega) = \sum_{i=1}^N \eta_i \nabla_x G(0,rf_i)
\end{align}
for every $0 < r < \delta$ and $\omega \in \S^{N-1}$. By the real analyticity of $\nabla_x G(0,y)$ in $y$, we have 
\begin{align}
\nabla_xG(0,r\omega) = \sum_{i=1}^N \eta_i \nabla_x G(0,rf_i)
\end{align}\label{eq:nablaG}
for every $0 < r < \min\{R(\omega), R^* +\eps/2\}$ and $\omega = \sum_{i=1}^N \eta_i f_i\in \S^{N-1}$.

Set $h_i=h_i(r) = \nabla_x G(0,r f_i)$ for $i=1, \dots, N$. Then every $h_i$ is real analytic in $r \in (0,R^* + \eps/2)$.

Hence,  by \eqref{eq:G2}, we have 

\begin{align}\label{eq:G3}
\nabla_x G(0,r\omega) = \sum_{i=1}^N \eta_i h_i(r)
\end{align}
for every $r\omega \in C$, where $C$ denotes the connected component of $\Omega \cap B_{R^*+\frac \varepsilon2}(0)$ containing $B_{R^*}(0)$. 

Consider the set $F:= \partial \Omega \cap C$. Since $\nabla_x G(0,R(\omega)\omega) = 0$ for every $\omega \in \S^{N-1}$, from \eqref{eq:G3} we deduce 
\begin{align}
\sum_{i=1}^N \eta_i h_i(R(\omega))=0, \quad R(\omega)\omega \in F.
\end{align}
Since $(\eta_1,\cdots,\eta_N)\not=0$, we get
\begin{align}
\det [h_1(R(\omega)) \cdots h_N(R(\omega))] = 0, \quad R(\omega)\omega \in F,
\end{align}
We remark that the range of $R(\omega)$ contains $(R^*,R^* + \varepsilon^*)$ for some $\varepsilon^*>0$. Indeed, since $B_{R^*}(0) \subsetneqq \Omega$ and $\partial\Omega\cap\partial B_{R^*}(0)\not=\emptyset$, there must be a connected component  $\Gamma \subsetneqq \partial \Omega$ intersecting $\partial B_{R^*}(0)$ and containing some point $Q^* \in \Gamma \setminus \partial B_{R^*}(0)$; then we set $\varepsilon^*=\operatorname{dist}(Q^*,0)  - R^*>0$. 

\textbf{Step 4:} \emph{Conclusion of the argument.}
This fact and the analyticity of $h_i$ imply 
\begin{align}
\det[h_1(r) \dots h_N(r)] = 0 \quad 0 < r < R^* + \eps/2.
\end{align}
In particular, for $0 < r < \delta$, since by \eqref{eq:M(r)} $\nabla_x G(0,rf_i) = M(r)f_i$, with $i=1, \dots, N$, we have
\begin{align*}
0 \equiv \det[h_1(r) \cdots h_N(r)] &= \det[M(r) f_1 \cdots M(r) f_N] \\&= \det M(r) \det[f_1 \cdots f_N], \quad  0 < r < \delta.
\end{align*}
Since $\det[f_1 \cdots f_N]\not=0$, we conclude that $\det M(r)\equiv 0$ for  $0 < r < \delta$.
However, for $y \in B_\delta(0) \setminus \{0\}$, we have 
\begin{align}\label{Green function singularity}
\nabla_x G(0,y) \sim C r^{-N + 2s -1}\omega, 
\end{align}
with $C \neq 0$, $y = r\omega$, $\omega \in \S^{N-1}$, as $r \to 0^+$. 
This contradicts \eqref{eq:M(r)}.

	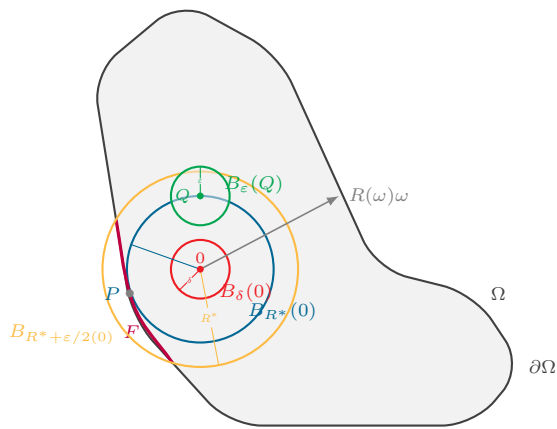
\begin{figure}[h]
		\centering
	\begin{tikzpicture}[thick,black!75,scale=1.3]
	\draw[name path=blackRegion,
	rounded corners=12pt
	,	fill=black!5
	] (0,0) -- (-0.4,2.5)
	-- (0.5,3.25) 
	-- (1.5,2.7)
	-- (2.5,0.5)
	--(3.5,0.25) 
	--(4,-0.5) 
	-- (3.5,-1.1)
	-- (1,-1.1) -- cycle;
	
	\path[name path=YellowRegion] (0.7,0.5) circle(1cm);

	\begin{scope}[xshift=-0.2cm]
	\draw[MidnightBlue,fill=black!5] (0.9,0.5) circle(0.75cm) 
	node[below right=0.3cm and 0.5cm]{\scriptsize $B_{R^*}(0)$};
	\draw [very thick,
	name intersections={of=blackRegion and YellowRegion},
	purple] (intersection-1) to[out=280,in=106]  (0.18,0.25)
	to[out=285,in=125] (intersection-2)
	node[above left=0.2cm and 0.3cm]{\scriptsize $F$};
	
	\draw[Dandelion] (0.9,0.5) circle(1cm)
	node[below left=0.6cm and 1cm]{\scriptsize $B_{R^* + \eps/2(0)}$};
	
	\draw[Red,fill=black!5] (0.9,0.5) circle(0.3cm)
	node[below right=0.051cm and 0.12cm]{\scriptsize $B_\delta(0)$};

	\node[gray] at (0.18,0.25){\scriptsize $\bullet$};
	\node[left,MidnightBlue] at (0.18,0.25){\scriptsize$P$};
	
	\draw[-latex,semithick,gray] (0.9,0.5) -- ++(1.42,0.75)
	node[right,black!50]{\scriptsize $R(\omega)\omega$};
	
	\draw[Red,thin] (0.9,0.5) -- ++(-0.22,-0.22)
	node[midway,
	scale=0.5,
	fill=black!5,
	circle,
	inner sep=0pt]{\tiny $\delta$};
	
	\draw[Dandelion,thin] (0.9,0.5) -- ++(0.19,-0.99)
	node[midway,
	scale=0.7,
	fill=black!5,
	circle,
	inner sep=0pt]{\tiny $R^*$};
	
	\draw[MidnightBlue,thin] (0.9,0.5) -- ++(-0.7,0.25)
	node[pos=0.7,
	scale=0.7,
	circle,
	inner sep=0pt]{\scriptsize};  
	
	\fill[Red] (0.9,0.5) circle(1pt)
	node[above=-0.05]{\tiny $0$};
	
	\end{scope}

	\begin{scope}[xshift=-0.2cm,yshift=0.75cm]
	\draw[Green,
	fill opacity=0.5,
	fill=Black!5] (0.9,0.5) circle(0.3cm)
	node[above right=-0.1cm and 0.185cm,opacity=1]{\scriptsize $B_\eps(Q)$};
	\fill[Green] (0.9,0.5)circle(1pt);
	\node[left,Green] at (0.9,0.5){\tiny $Q$};
	
	\draw[Green,thin] (0.9,0.5) -- ++(0,0.3)
	node[midway,
	scale=0.5,
	fill=black!5,
	circle,
	inner sep=0pt]{\tiny $\eps$};
	\end{scope}
	
	
	\node at (3.75,0.25){\scriptsize$ \Omega$};
	\node[inner sep=0pt] (A2) at (4.2,-0.5){\scriptsize$\partial \Omega$};
	\end{tikzpicture}
	\caption{Illustration of the construction in the proof of Theorem \ref{thm:sym}.}
	\label{fig:sym}
\end{figure}	
	\end{proof}

\subsection{Centro-symmetry result for the IBVP}
\label{ssec:centro-sym}

In this section, we prove Theorem \ref{thm:centrosym}. The first implication follows from the uniqueness of solutions to \eqref{eq:fhe-bc}. In the second one, which uses the assumption on $\Omega$ being star-shaped, we rely on the unique continuation properties for the fractional Laplacian.

\begin{proof}[Proof of Theorem \ref{thm:centrosym}]
	\textbf{((2)$\implies$(1))} Let $w(x,t) = u(-x,t)$. Then $w$ is also a solution of \eqref{eq:fhe-bc}. Uniqueness of the solution gives $w = u$; hence, in particular, $\nabla u(0,t) = 0$ for  $t>0$.
	
	\textbf{((1)$\implies$(2))} \textbf{Step 1:} \emph{Reduction to an elliptic problem.} By employing the function $v$ given by \eqref{function v} as in \textbf{((1)$\implies$(2))} \textbf{Step 1} of the proof of Theorem \ref{thm:sym}, we have  \eqref{eq:fl-vl-zero-lambda} and \eqref{representation by Green's function of v}.

\textbf{Step 2:} \emph{Choice of initial data and properties of the Green's function.}	For any $\psi \in C^\infty_0(B_\delta(0))$, we write 
	$$
	u_0(x) = \psi(x) + \psi(-x).
	$$
	Then $u_0(x) = u_0(-x)$ and, by assumption, 
	\begin{align*}
	0 &= \nabla v(0) = \int_{B_\delta(0)} \nabla_x G(0,y) (\psi(y)+\psi(-y)) \dd y
	\\&= \int_{B_\delta(0)} (\nabla_x G(0,y) + \nabla_x G(0,-y)) \psi(y) \dd y. 
	\end{align*}
	Therefore, since $\psi \in C^\infty_0(B_\delta(0))$ is arbitrarily chosen, we have 
	\begin{align*}
	\nabla_x G(0,y) + \nabla_x G(0,-y) \equiv 0, \quad y \in B_\delta(0).
	\end{align*}
	
	\textbf{Step 3:} \emph{Reflection and unique continuation arguments.} Let us consider the reflected domain $\Omega^* = \{x \in \R^N: -x \in \Omega\}$ and let $C$ be the connected component of $\Omega \cap \Omega^\ast$ containing the origin. Since $\Omega$ is star-shaped with respect to the origin, we actually have $C = \Omega \cap  \Omega^*$.
	
	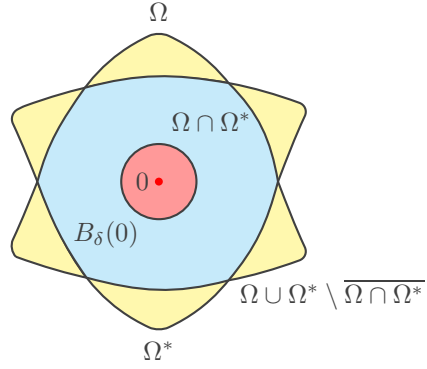
\begin{figure}[h]
		\centering
		\begin{tikzpicture}[thick,black!75]
		
		\begin{scope}[rounded corners]
		\draw[name path=triangle1,fill=yellow!40] (-1.9,0) to[out=-20,in=200] (2,0)
		to[out=115,in=-25] (0,3)
		node[above]{$\Omega$} 
		to[out=205,in=65] (-2,0.1) -- (-1.9,0) ;
		
		\draw[name path=triangle2,fill=yellow!40] (-1.8,2) to[out=20,in=160] (2,2)
		to[out=245,in=25] (0,-1) 
		node[below]{$\Omega^*$} 
		to[out=155,in=-65] (-2,1.9) -- (-1.8,2); 
		
		\path [name intersections={of=triangle1 and triangle2}] (intersection-1) -- (intersection-2);
		\end{scope}
		
		\draw [fill=cyan!20] (intersection-1) to[out=195,in=-15] (intersection-2)
		to[out=125,in=-70] (intersection-6)
		to[out=75,in=235] (intersection-5)
		to[out=15,in=170] (intersection-3)
		to[out=-45,in=100] (intersection-4)
		to[out=255,in=50] cycle;
		
		\draw[fill=red!40] (0,1) circle(0.5cm)node[left]{$0$};
		\fill[red] (0,1) circle(1.5pt);
		
		\node at (-0.7,0.3){$B_\delta(0)$};
		\node at (0.7,1.8){$\Omega \cap \Omega^*$};
		\node at (2.3,-0.5){$\Omega \cup \Omega^* \setminus \overline{\Omega \cap \Omega^*}$};
		
		\end{tikzpicture}
		\caption{Illustration of the intersection $\Omega \cap \Omega^*$ used in the proof of Theorem \ref{thm:centrosym}.}
		\label{fig:centros}
	\end{figure}

	By the real-analyticity of $G(x,y)$ in $y$, we have that 
	\begin{align*}
	h(y) := \nabla_x G(0,y) + \nabla_x G(0,-y) \equiv 0, \quad y \in C.
	\end{align*}
	
	Suppose, for the sake of finding a contradiction, that $\Omega \neq \Omega^*$. Then $D= \Omega \setminus \bar C \neq \emptyset$. Using the mean-value formula \eqref{eq:mean-value} and the observation in \cite[p. 21]{MR2569321} (on $s$-harmonic equations on the union of two domains), we get that $h$ also satisfies 
	\begin{align*}
	\begin{cases}
	(-\Delta)^s h = 0, & y \in (\Omega \cup \Omega^*) \setminus (\overline{\Omega \cap \Omega^*}), \\
	h = 0, & y \in \R^N \setminus ((\Omega \cup \Omega^*) \setminus (\overline{\Omega \cap \Omega^*})).
	\end{cases}
	\end{align*}
	Then $h \equiv 0$ and in particular $\nabla_x G(0,y)=h(y)=0$ for $y\in D$, which implies that $\nabla_x G(0,y)\equiv 0$ for $y\in \Omega\setminus\{0\}$ by the real-analyticity. This contradicts the fact that $G(0,y)$ has a singularity at the origin as is mentioned in \eqref{Green function singularity}.
\end{proof}


\subsection{Spatial zero points}
\label{sec:proofs-z}


Let us consider the problems for spatial zero points. Namely, instead of $\nabla u(\cdot, t)=0$, we consider $u(\cdot, t)=0$ for each time $t\ge 0$. Then, along the similar arguments we can get all theorems by replacing the balance law $\int_{\mathbb{S}^{N-1}} \omega u_0(\omega r) \dd \omega = 0$ by $\int_{\mathbb{S}^{N-1}}  u_0(\omega r) \dd \omega = 0$. For the sake of completeness, here we outline the proof of Theorems \ref{thm:1-z}, \ref{thm:sym-z} and \ref{thm:centrosym-z}.

\begin{proof}[Proof of Theorem \ref{thm:1-z}]
	
	As in the proof of Theorem \ref{thm:1}, we have $P^{N}(x, 1;s) = \Phi_s(x)$ and, by \cite[Eq. (16.14)]{MR3916700}. We have 
	\begin{align}
	P^{N}(x, 1;s) = \frac{2\pi}{|x|^{\tfrac{N}{2}-1}} \int_0^\infty e^{-(2\pi \rho)^{2s}} \rho^{\tfrac{N}{2}} J_{\tfrac{N}{2}-1 }(2\pi |x|\rho) \dd \rho.
	\end{align}  
	Let $u_0\in L^\infty(\mathbb R^N)$ with $\supp u_0 \subset B_L(0)$. Set 
	$$
	a(r)=\int_{\mathbb S^{N-1}} u_0(r\omega)\dd\omega\mbox{ for }r\in [0,L].
	$$
	Hence, we compute
	\begin{align*}
	u(0,t) &= 2\pi \int_{\R^N}  P^{N}(x, t;s) u_0( x) \dd  x
	\\&=  2\pi  t^{-\frac{N}{2s}} \int_{\R^N}  P^{N}\left(\frac{ x}{t^{\frac{1}{2s}}}, 1;s \right) u_0( x) \dd  x
	\\&=  2\pi  t^{-\frac{N}{2s}} \int_0^L r^{N-1} P^{N}( hr,1;s)  a(r) \dd r \\ 
	&=  (2\pi)^2  t^{-\frac{N+1}{2s}+
		\frac{N}{4s}} \int_0^L r^{\frac{N}{2}} \left(\int_0^\infty e^{-(2\pi \rho)^{2s}}\rho^{\tfrac{N}{2}} J_{\tfrac{N}{2}-1 }(2\pi hr\rho) \dd \rho  \right) a(r) \dd r,
	\end{align*}
	where we set $h:= t^{-1/2s}$ and note that $0 < hr < hL$.
	By \cite[Eq. (4.23), p. 22]{MR3916700}, we have 
	\begin{align*}
	J_{\frac{N}{2}-1}(z) = \sum_{k=0}^\infty (-1)^k \frac{(z/2)^{N/2-1+2k}}{\Gamma(k+1)\Gamma(k+N/2)}, 
	\end{align*}
	with $|z|< \infty$ and $|\arg z|< \pi$. Hence,
	\begin{align*}
	&\int_0^\infty e^{-(2\pi \rho)^{2s}}\rho^{\tfrac{N}{2}} J_{\tfrac{N}{2}-1 }(2\pi hr\rho) \dd \rho 
	\\&=(\pi h r)^{N/2-1} \sum_{k=0}^\infty (-1)^k \frac{1}{\Gamma(k+1)\Gamma(k+N/2)}(\pi h)^{2k} r^{2k} c_k, 
	\end{align*}
	where we set $c_k=\int_0^\infty e^{-(2\pi \rho)^{2s}} \rho^{N-1+2k} \dd \rho$.
	Therefore $u(0,t) = 0$ for all $t>0$ if and only if 
	\begin{align*}
	\sum_{k=0}^\infty (\pi h)^{2k} (-1)^k \frac{c_k}{\Gamma(k+1)\Gamma(k+N/2)}\int_0^L r^{2k} a(r) r^{N-2} \dd r = 0  \text{ for all $h > 0$,} 
	\end{align*}
	that is, 
	\begin{align*}
	\int_0^L r^{2k} (a(r) r^{N-1}) \dd r = 0 \text{ for all $k \ge 0$,}
	\end{align*}	
	which is equivalent to 
	\begin{align*}
	a(r) = 0 \text{ for any  $r \in [0,L]$.}	
	\end{align*}
	
\end{proof}


\begin{proof}[Proof of Theorem \ref{thm:sym-z}]
	\textbf{((2)$\implies$(1))} 
\textbf{Step 1:} \emph{The Green's function preserves the balance law.} Let us assume that $\Omega = B_R(0)$ and that the initial data $u_0$ satisfies the balance law.  As a first step towards the proof of the result, we show that $\int_{\S^{N-1}} u_0 \dd \omega = 0$ implies  $\int_{\S^{N-1}} Gu_0 \dd \omega = 0$, where $G$ is  the Green's operator which sends $u_0$ into $Gu_0$ as in Remark \ref{remark meaning of Green operator}. To this end, we compute as follows (using the notation $G(x,y) = \hat G(|x-y|,|x|,|y|)\,$): for $x = \rho \omega$, and $0 < \rho \le R$, 
\begin{align*}
&\int_{\S^{N-1}} (Gu_0)(\rho\omega) \dd \omega =\int_{\S^{N-1}}  \int_{\S^{N-1}} u_0(y) \hat G(\sqrt{\rho^2 + |y|^2 - 2 \rho \omega \cdot y}, \rho, |y|) \dd y \dd \omega \\&= \int_{\S^{N-1}} u_0(y) \int_{\S^{N-1}}  \hat G(\sqrt{\rho^2+|y|^2-2\rho \omega \cdot y}, \rho, |y|) \dd \omega \dd y\\&=
\int_0^R s^{N-1} \dd s \int_{\S^{N-1}} \dd \eta \, u_0(s\eta) \int_{\S^{N-1}}  \hat G(\sqrt{\rho^2+s^2-2\rho s \omega \cdot \eta}, \rho, s) \dd \omega, 
\end{align*}
where we wrote $y = s\eta$, for $\eta \in \S^{N-1}$, $0 < s \le r$ in the last line. 

Using an integral identity of \cite[Eq. (1.2), p. 8]{MR2098409}, we have 

\begin{align*}
	&\int_{\S^{N-1}} \hat G(\sqrt{\rho^2 + s^2 - 2 \rho s \omega \cdot \eta}, \rho, s) \dd \omega
	\\&=  |\S^{N-2}| \int_{-1}^1 (1-\lambda^2)^\frac{N-3}{2}  \hat G(\sqrt{\rho^2 + s^2 - 2 \rho s \lambda}, \rho, s) \dd \lambda.
	\end{align*}
	As a result, 
	\begin{align*}
	&\int_{\S^{N-1}} u_0(s\eta)  |\S^{N-2}| \int_{-1}^{1} (1-\lambda^2)^{\frac{N-3}{2}} \hat G(\sqrt{\rho^2 + s^2 - 2 \rho s \lambda}, \rho, s) \dd \lambda \dd \eta \\
	&= |\S^{N-2}| \int_{-1}^{1} (1-\lambda^2)^{\frac{N-3}{2}}  \hat G(\sqrt{\rho^2+s^2-2\rho s \lambda}, \rho, s) \dd \lambda \underbrace{\int_{\S^{N-1}} u_0(s\eta) \dd \eta}_{= 0} 
	\\ &= 0,
	\end{align*}
	which yields  that $\int_{\S^{N-1}} (Gu_0)(\rho \omega) \dd \omega = 0$. 


\textbf{Steps 2 -- 4:} Once we know that the Green's operator preserves the balance law in Step 1, the rest follows as in the corresponding steps in the proof of Theorem \ref{thm:sym}.

\textbf{((1)$\implies$(2))} \textbf{Step 1:} \emph{Choice of initial data and symmetry properties of $G(0,y)$.} Let us consider as initial data $u_0(x) = \eta(|x|)\psi(x/|x|)$, where $\eta:\R \to \R$ is a smooth function with support in $(0,\delta)$ and $\psi$ on $\S^{N-1}$ satisfies the balance law $\int_{\S^{N-1}} \psi(\omega) \dd \omega = 0$. 
Then, by employing the function $v$ given by \eqref{function v} as in \textbf{((1)$\implies$(2))} \textbf{Step 1} of the proof of Theorem \ref{thm:sym},  since $u(0,t)=0$ for any $t>0$, we obtain
\begin{align*}
0 &= \int_{B_\delta(0)} G(0,y) \eta(|y|)\psi(y/|y|) \dd y \\
&=\int_0^\delta \eta(r)\left(r^{N-1} \int_{\S^{N-1}}  G(0,r\omega) \psi(\omega) \dd \omega \right) \dd r,
\end{align*}
which implies 
\begin{align*}
\int_{\S^{N-1}}  G(0,r\omega) \psi(\omega)  \dd \omega = 0 \quad \text{ for any $r \in (0,\delta)$},
\end{align*}
and hence
\begin{align} \label{eq:M(r)-z}
G(0,y) = m(r)  \quad  \text{ for any $y = r\omega \in B_\delta(0) \setminus \{0\}$},
\end{align}
where $m(r)$ is a function in $r=|y|$ and $\omega \in \S^{N-1}$. 

\textbf{Step 2:} \emph{Conclusion of the proof.} Let $B_{R^*}(0) \subset \Omega$ and $P \in \bar B_{R^*}(0) \cap \partial \Omega $ for some $P\in \partial \Omega$ and $R^*>0$. Since $ G(0,y)$ is real-analytic in $y \in \Omega \setminus \{0\}$ by Proposition \ref{lm:prob-formula},  \eqref{eq:M(r)-z} yields that $G(0,y)$ is radially symmetric in $y \in \bar B_{R^*}(0)$. By observing that  $G(0,\cdot) >0$ in $\Omega$ and $G(0,\cdot) = 0$ in $\R^N \setminus \Omega$, we conclude that $\Omega = B_{R^*}(0)$.

\end{proof}


\begin{proof}[Proof of Theorem \ref{thm:centrosym-z}]
	\textbf{((2)$\impliedby$(1)) }Let $w(x,t) = -u(-x,t)$. Then $w$ is also a solution of \eqref{eq:fhe-bc}. Uniqueness of the solution gives $w = u$; hence, in particular, $u(0,t) = 0$ for  $t>0$.
	
\textbf{((1)$\implies$(2))} \textbf{Step 1:} \emph{Reduction to an elliptic problem.} Let us consider $v(x) = \int_0^\infty u(x,t) \dd t$ 
as in the proof of Theorem \ref{thm:centrosym-z}.
	
	\textbf{Step 2:} \emph{Choice of initial data and properties of the Green's function.} For any $\psi \in C^\infty_0(B_\delta(0))$, we write 
	$$
	u_0(x) = \psi(x) - \psi(-x).
	$$
	Then $u_0(x) = - u_0(-x)$ and, by assumption, 
	\begin{align*}
	0 &=  v(0) = \int_{B_\delta(0)}  G(0,y) (\psi(y)-\psi(-y)) \dd y
	\\&= \int_{B_\delta(0)} ( G(0,y) - G(0,-y)) \psi(y) \dd y. 
	\end{align*}
	Therefore, since $\psi \in C^\infty_0(B_\delta(0))$ is arbitrarily chosen, we have 
	\begin{align*}
	 G(0,y) -  G(0,-y) \equiv 0, \quad y \in B_\delta(0).
	\end{align*}
	
	\textbf{Step 3:} \emph{Reflection and unique continuation arguments.} Let us consider the reflected domain $\Omega^* = \{x \in \R^N: -x \in \Omega\}$ and let $C$ be the connected component of $\Omega \cap \Omega^\ast$ containing the origin. Since $\Omega$ is star-shaped, we actually have $C = \Omega \cap  \Omega^*$.
		
	By the real-analyticity of $G(x,y)$ in $y$, we have that 
	\begin{align*}
	h(y) := G(0,y) - G(0,-y) \equiv 0, \quad y \in C.
	\end{align*}
	
	Suppose, for the sake of finding a contradiction, that $\Omega \neq \Omega^*$. Then $D= \Omega \setminus \bar C \neq \emptyset$. We note that $h$ also satisfies 
	\begin{align*}
	\begin{cases}
	(-\Delta)^s h(x) = 0, & y \in (\Omega \cup \Omega^*) \setminus (\overline{\Omega \cap \Omega^*}), \\
	h(x) = 0, & y \in \R^N \setminus ((\Omega \cup \Omega^*) \setminus (\overline{\Omega \cap \Omega^*})).
	\end{cases}
	\end{align*}
	Then $h \equiv 0$ and in particular $G(0,y)=h(y)=0$ for $y\in D$, which implies that $G(0,y)\equiv 0$ for $y\in \Omega\setminus\{0\}$ by the real-analyticity. This contradicts the fact that $G(0,y)$ has a singularity at the origin as is mentioned in \eqref{Green function singularity}.
\end{proof}


\subsection{Fractional wave equation}
\label{sec:proofs-w}

For the Cauchy problem \eqref{eq:fwCauchy}, using the Fourier transform, we can prove a well-posedness result as in \cite{MR4043580}. From the representation formula in \cite{MR4043580}, it is also possible to deduce information on the regularity of the solution.

On the other hand,  the IBVP \eqref{eq:fw} is solved by the method of separation of variables as problem  \eqref{eq:fhe-bc} is solved in \cite{MR3462074} with the aid of the eigenfunctions of the fractional Laplacian. See also \cite[Theorem 2.1]{2105.11324} for more general settings by Galerkin's method.

To gain further regularity for the solution of \eqref{eq:fw}, we would need to take the initial data in the space
$$H^{s,\sigma}(\Omega) := \left\{u\in L^2(\Omega): \ \sum_{k \in \N}|\lambda_k^\sigma\langle u, \phi_k \rangle_{L^2(\Omega)}|^2 < + \infty \right\},$$
(for some suitable $\sigma >1$) where $\{\lambda_k\}_{k \in \N}$ is the (non-decreasing) sequence of eigenvalues of $(-\Delta)^s$ and $\{\phi_k\}_{k \in \N}$ is the corresponding sequence of eigenfunctions, where the fact that each $\phi_k \in C^\infty(\Omega)$ follows from the $L^\infty$ estimates \cite[Proposition 3.2]{MR3462074} and the bootstrap argument of \cite{MR3331523}.
Note that $H^{s,\sigma}(\Omega)$ is the domain of the $\sigma$-power of $(-\Delta)^s$. 

On the other hand, for the sake of proving the symmetry results, we shall only rely on the properties of the corresponding elliptic problem obtained through the Laplace transform. This, together with the fact that it is unclear whether or not a sufficient number of initial data in $H^{s,\sigma}(\Omega)$ satisfy the balance law \eqref{eq:balance-law} in $\Omega$, motives the assumptions in the Theorems for the fractional wave equation. 

With these considerations, all the symmetry results on the fractional wave equation follow from the ones for the heat equation thanks to the following two lemmas (which was proved in \cite[pp. 251-252]{MR847715} in case $s=1$).

\begin{lemma}[Relationship between fractional heat and wave equation for the IBVP]
	Let $u$ be the  solution of \eqref{eq:fhe-bc} and $w$ be the solution of \eqref{eq:fw} for $u_0 \in C^\infty_0(\Omega)$. Then the following holds: 
	\begin{enumerate}
		\item $W_\lambda(0) = 0$ for any $\lambda>0$ if and only if $u(0,t) = 0$ for any $t>0$; 
		\item $\nabla_x W_\lambda(0) = 0$ for any $\lambda>0$ if and only if $\nabla_x u(0,t) = 0$ for any $t>0$.
	\end{enumerate}
\end{lemma}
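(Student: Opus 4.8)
The plan is to connect the two flows through the resolvent of the Dirichlet realization of $(-\Delta)^s$ on $\Omega$, and then to invoke the injectivity of the Laplace transform. The cleanest route is to Laplace-transform the two problems directly. For $\lambda>0$ set $V_\lambda(x):=\int_0^\infty e^{-\lambda t}u(x,t)\dd t$ (this converges absolutely since $u(\cdot,t)$ decays exponentially, the Dirichlet spectrum of $(-\Delta)^s$ on the bounded set $\Omega$ having a positive gap), and recall $W_\lambda(x)=\int_0^\infty e^{-\lambda t}w(x,t)\dd t$. Multiplying the equation in \eqref{eq:fhe-bc} by $e^{-\lambda t}$, integrating in $t$, and integrating by parts using $u(\cdot,0)=u_0$, one gets that $V_\lambda\in\widetilde{H}^s(\Omega)$ solves
\[
(-\Delta)^s V_\lambda+\lambda V_\lambda=u_0 \ \text{ in }\Omega,\qquad V_\lambda=0 \ \text{ in }\R^N\setminus\Omega;
\]
multiplying the equation in \eqref{eq:fw} by $e^{-\lambda t}$, integrating twice by parts, and using $w(\cdot,0)=0$, $\pt w(\cdot,0)=u_0$, one finds that $W_\lambda\in\widetilde{H}^s(\Omega)$ solves
\[
(-\Delta)^s W_\lambda+\lambda^2 W_\lambda=u_0 \ \text{ in }\Omega,\qquad W_\lambda=0 \ \text{ in }\R^N\setminus\Omega.
\]
Thus $W_\lambda$ and $V_{\lambda^2}$ solve the same problem \eqref{eq:fl-h2} with parameter $\lambda^2>0$, so the uniqueness behind Proposition \ref{prop:estimate2} yields the key identity $W_\lambda=V_{\lambda^2}$ for every $\lambda>0$.

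Next I would transfer this identity to the value and the gradient at the interior point $0$. This follows either from the Green's-function representations $V_\mu(x)=\int_\Omega G_\mu(x,y)u_0(y)\dd y$ and $W_\lambda(x)=\int_\Omega G_{\lambda^2}(x,y)u_0(y)\dd y$ (differentiating under the integral is legitimate because these kernels are smooth off the diagonal and $0\in\Omega$), or from the spectral expansions
\[
u(x,t)=\sum_{k\in\N}e^{-\lambda_k t}\langle u_0,\phi_k\rangle_{L^2(\Omega)}\phi_k(x),\qquad
w(x,t)=\sum_{k\in\N}\frac{\sin(t\sqrt{\lambda_k})}{\sqrt{\lambda_k}}\langle u_0,\phi_k\rangle_{L^2(\Omega)}\phi_k(x),
\]
which converge in $C^1_{\mathrm{loc}}(\Omega)$ — the first with exponential decay in $t$, the second boundedly in $t$ — because $u_0\in C^\infty_0(\Omega)$ has super-polynomially decaying coefficients $\langle u_0,\phi_k\rangle_{L^2(\Omega)}$ while $\phi_k$ and $\nabla\phi_k$ grow only polynomially in $\lambda_k$ on compacta (using the smoothness of the $\phi_k$ recalled in Section \ref{sec:proofs-w}). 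Termwise Laplace transformation, together with $\int_0^\infty e^{-\lambda t}e^{-\lambda_k t}\dd t=(\lambda+\lambda_k)^{-1}$ and $\int_0^\infty e^{-\lambda t}\sin(t\sqrt{\lambda_k})\dd t=\sqrt{\lambda_k}(\lambda^2+\lambda_k)^{-1}$, reproves $W_\lambda=V_{\lambda^2}$ and also gives $\nabla_x W_\lambda(0)=\nabla_x V_{\lambda^2}(0)$.

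To conclude, I would use that $\lambda\mapsto\lambda^2$ is a bijection of $(0,\infty)$ onto itself: hence $W_\lambda(0)=0$ for all $\lambda>0$ is equivalent to $V_\mu(0)=0$ for all $\mu>0$, and $\nabla_x W_\lambda(0)=0$ for all $\lambda>0$ is equivalent to $\nabla_x V_\mu(0)=0$ for all $\mu>0$. Since $t\mapsto u(0,t)$ and $t\mapsto\nabla_x u(0,t)$ are continuous and bounded on $(0,\infty)$, the injectivity of the Laplace transform (Lerch's theorem) shows that $V_\mu(0)=0$ for all $\mu>0$ is equivalent to $u(0,t)=0$ for all $t>0$, and $\nabla_x V_\mu(0)=0$ for all $\mu>0$ is equivalent to $\nabla_x u(0,t)=0$ for all $t>0$. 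Chaining these equivalences gives both assertions of the lemma.

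I expect the only real subtlety to be the well-posedness bookkeeping behind $W_\lambda$: one must know that $w(x,t)$ (and $\nabla_x w(x,t)$) does not grow exponentially in $t$ on compact subsets of $\Omega$, so that the defining integral converges absolutely for every $\lambda>0$ and so that the interchange of the $t$-integral with $(-\Delta)^s$, with point evaluation at $0$, and with the spectral sum is justified. This is exactly what the smoothness of the eigenfunctions together with the rapid decay of the spectral coefficients of $u_0\in C^\infty_0(\Omega)$ provides; with that in hand, the remainder is the formal computation above (this is the argument of \cite[pp. 251--252]{MR847715} in the case $s=1$, adapted via the elliptic estimates of Propositions \ref{prop:estimate1}--\ref{prop:estimate2}).
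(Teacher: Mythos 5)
Your proposal follows essentially the same route as the paper's proof: Laplace-transform both evolution problems to obtain the elliptic resolvent equations with parameters $\lambda$ and $\lambda^2$, invoke uniqueness to identify $W_\lambda$ with the heat-side transform at parameter $\lambda^2$, then conclude via injectivity of the Laplace transform. The extra bookkeeping you supply (convergence of the spectral sums, legitimacy of differentiating under the integral) is sound and fills in details the paper leaves implicit, but the underlying argument is the same.
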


\begin{proof}
Let us consider the Laplace transform of $u$ and $w$: 
$U_\lambda(x) = \int_0^\infty e^{-\lambda t}u(x,t) \dd t$ and $W_\lambda(x) = \int_0^\infty e^{-\lambda t}w(x,t) \dd t$, which solve
\begin{align}\label{eq:lap2}
\begin{cases}
(-\Delta)^s W_\lambda(x) + \lambda^2 W_\lambda(x) = u_0(x), & x \in \Omega, \\
W_\lambda(x) = 0, & x \in \R^N \setminus \Omega.
\end{cases}
\end{align}
and
\begin{align}\label{eq:lap3}
\begin{cases}
(-\Delta)^s U_\lambda(x) + \lambda U_\lambda(x) = u_0(x), & x \in \Omega, \\
U_\lambda(x) = 0, & x \in \R^N \setminus \Omega.
\end{cases}
\end{align}
respectively. From the uniqueness of the elliptic boundary value problem, we deduce that $U_{\lambda^2} \equiv W_{\lambda}$.
Therefore, we have 
\begin{align*}
W_\lambda(0)&= \int_0^\infty e^{-\lambda^2 t}u(0,t) \dd t, \\
\nabla_x W_\lambda(0)  &= \int_0^\infty e^{-\lambda^2 t}\nabla_x u(0,t) \dd t,
\end{align*}
for any $\lambda>0$.  Since the Laplace transform is injective, this completes the proof.
\end{proof}


 By the same argument, we have the following result for the Cauchy problem.
 
 \begin{lemma}[Relationship between fractional heat and wave equation for the Cauchy problem]
	Let $u$ be the  solution of \eqref{eq:fhe} and $w$ be the solution of \eqref{eq:fwCauchy} for $u_0 \in C^\infty_0(\mathbb R^N)$ with $\supp(u_0) \subset B_L(0)$ for some $L >0$. Then the following holds: 
	\begin{enumerate}
		\item $w(0,t) = 0$ for any $t>0$ if and only if $u(0,t) = 0$ for any $t>0$; 
		\item $\nabla_x w(0,t) = 0$ for any $t>0$ if and only if $\nabla_x u(0,t) = 0$ for any $t>0$.
	\end{enumerate}
\end{lemma}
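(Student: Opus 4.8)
The plan is to reproduce the proof of the preceding lemma (the IBVP version) in the Cauchy setting: take the Laplace transform in time of both problems, observe that they reduce to the \emph{same} elliptic equation on $\R^N$ with the wave parameter equal to the square of the heat parameter, and then invoke injectivity of the Laplace transform. Concretely, set $U_\lambda(x):=\int_0^\infty e^{-\lambda t}u(x,t)\dd t$ and $W_\lambda(x):=\int_0^\infty e^{-\lambda t}w(x,t)\dd t$.

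First I would record the bounds that make $U_\lambda$ and $W_\lambda$ well defined for every $\lambda>0$. For the heat solution this is immediate, since $\|u(\cdot,t)\|_{L^\infty}\le\|u_0\|_{L^\infty}$ (one may alternatively invoke \eqref{eq:estks}). For the wave solution one uses the Fourier representation of \eqref{eq:fwCauchy} (as in \cite{MR4043580}): because $\widehat{u_0}$ is Schwartz and $s<1\le N$, the elementary bound $|\widehat w(\xi,t)|\le C\min(t,|\xi|^{-s})\,|\widehat{u_0}(\xi)|$ integrates to a bound on $\|w(\cdot,t)\|_{L^\infty}$ that is uniform in $t$, and similarly $\|\partial_t w(\cdot,t)\|_{L^\infty}\le\|\widehat{u_0}\|_{L^1}$. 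These bounds also justify the differentiation in $x$ under the integral sign used below.

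Next I would Laplace-transform the two equations in $t$. Applying $\int_0^\infty e^{-\lambda t}\cdot\,\dd t$ to \eqref{eq:fhe} gives $(-\Delta)^s U_\lambda+\lambda U_\lambda=u_0$ in $\R^N$, while applying it to \eqref{eq:fwCauchy} and using $w(\cdot,0)=0$, $\partial_t w(\cdot,0)=u_0$ gives $(-\Delta)^s W_\lambda+\lambda^2 W_\lambda=u_0$ in $\R^N$. For every $\mu>0$ the equation $(-\Delta)^s v+\mu v=u_0$ has a unique bounded tempered solution (on the Fourier side $\widehat v=\widehat{u_0}/(|\xi|^{2s}+\mu)$), so $W_\lambda\equiv U_{\lambda^2}$ for every $\lambda>0$. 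Evaluating at $x=0$ and differentiating in $x$ yields
\begin{align*}
W_\lambda(0)=\int_0^\infty e^{-\lambda^2 t}u(0,t)\dd t,\qquad \nabla_x W_\lambda(0)=\int_0^\infty e^{-\lambda^2 t}\nabla_x u(0,t)\dd t .
\end{align*}

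Finally I would close the argument with the injectivity of the Laplace transform. We have $w(0,t)=0$ for all $t>0$ if and only if $W_\lambda(0)=\int_0^\infty e^{-\lambda t}w(0,t)\dd t=0$ for all $\lambda>0$; and since $\lambda\mapsto\lambda^2$ is a bijection of $(0,\infty)$, the identity above shows this is equivalent to $\int_0^\infty e^{-\mu t}u(0,t)\dd t=0$ for all $\mu>0$, i.e. to $u(0,t)=0$ for all $t>0$. Chaining these equivalences proves (1); repeating the chain with $\nabla_x$ in place of point evaluation proves (2). The only step that is not entirely routine is the first one: unlike the Dirichlet problem of the previous lemma, where solutions decay exponentially, here one must genuinely check that $w$ and $\partial_t w$ stay bounded as $t\to\infty$ so that $W_\lambda$ exists — for $C_0^\infty$ data this is guaranteed by the explicit Fourier representation, but it is worth spelling out.
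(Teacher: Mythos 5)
Your proof is correct and follows essentially the same route the paper intends: Laplace-transform both evolution problems, observe that $W_\lambda$ and $U_{\lambda^2}$ solve the same elliptic problem, conclude $W_\lambda \equiv U_{\lambda^2}$ by uniqueness, and finish with injectivity of the Laplace transform. The paper simply says ``by the same argument'' as in the preceding IBVP lemma, so the substantive content of your write-up is the additional care you take to verify that $W_\lambda$ is well defined in the Cauchy setting (where the domain is unbounded and the paper's $L^\infty$-bound lemma does not apply); your Fourier-side estimate $|\widehat{w}(\xi,t)| \le \min(t,|\xi|^{-s})\,|\widehat{u_0}(\xi)|$ together with $s<N$ handles this cleanly. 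One minor point: you invoke the bounds on $w$ and $\partial_t w$ to ``justify the differentiation in $x$ under the integral sign,'' but what is actually needed there is a $t$-uniform bound on $\nabla_x w$, not on $\partial_t w$; this follows from the analogous Fourier estimate $\|\nabla_x w(\cdot,t)\|_{L^\infty}\le 2\pi\int |\xi|\min(t,|\xi|^{-s})|\widehat{u_0}(\xi)|\,\dd\xi<\infty$ (using $1-s>0$ for integrability near the origin), so it would be cleaner to state that bound explicitly.
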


We remark that, to make the argument of the above Lemma rigorous, we need to prove the  regularity of solutions of the elliptic problems \eqref{eq:lap2}--\eqref{eq:lap3} ($C^0$ or $C^1$ regularity, respectively); to this end, we apply the bootstrap argument of \cite{MR3331523}, which requires us only to prove the boundedness of solutions.

\begin{lemma}[$L^\infty$-bound]
Let $\Omega$ be a bounded $C^{1,1}$ domain in $\R^N$ and $u_0 \in C^\infty_0(\R^N)$ with $\supp u_0 \subset B_L(0) \Subset \Omega$. 
For each $\lambda >0$, let  us consider 
	\begin{align}\label{eq:lap2-2}
	\begin{cases}
	(-\Delta)^s W_\lambda(x) + \lambda^2 W_\lambda(x) = u_0(x), & x \in \Omega, \\
	W_\lambda(x) = 0, & x \in \R^N \setminus \Omega.
	\end{cases}
	\end{align}
Then  $W_\lambda \in L^\infty(\R^N)$ and there exists a constant $C>0$, independent of $\lambda>0$, such that $|W_\lambda| \le C$ in $\R^N$. 
\end{lemma}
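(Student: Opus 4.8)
\emph{Strategy and barrier.} The plan is to dominate the (weak) solution $W_\lambda\in\widetilde H^s(\Omega)$ of \eqref{eq:lap2-2}, which exists by Proposition~\ref{prop:estimate2} applied with the coefficient $\lambda^2>0$, by a fixed comparison function $z$ that does \emph{not} depend on $\lambda$, using crucially that the zeroth-order term $\lambda^2W_\lambda$ carries a favourable sign on the set where $W_\lambda$ would exceed $z$. Set $M:=\|u_0\|_{L^\infty(\R^N)}$ and let $z\in\widetilde H^s(\Omega)$ be the unique weak solution of $(-\Delta)^sz=M$ in $\Omega$, $z=0$ in $\R^N\setminus\Omega$. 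First I would show $0\le z\le C_\ast M$ a.e.\ in $\R^N$, with $C_\ast=C_\ast(N,s,\Omega)$: nonnegativity follows by testing the equation against $z_-:=(-z)_+\in\widetilde H^s(\Omega)$ and using $\mathcal E_s[z,z_-]\le-\mathcal E_s[z_-]$ together with $M\langle\mathbf 1_\Omega,z_-\rangle_{L^2}\ge0$; the upper bound follows by comparison with an explicit solution on a ball. Namely, fix $\rho>0$ with $\Omega\subset B_\rho(0)$ and recall Getoor's identity $(-\Delta)^s\big((\rho^2-|\cdot|^2)_{+}^{s}\big)\equiv b_{N,s}$ on $B_\rho(0)$ for some $b_{N,s}>0$; then $z_\rho:=\tfrac{M}{b_{N,s}}(\rho^2-|\cdot|^2)_{+}^{s}$ solves $(-\Delta)^sz_\rho=M$ on $B_\rho(0)\supseteq\Omega$ with $z_\rho=0$ off $B_\rho(0)$, and since $(z-z_\rho)_+$ is supported in $\Omega$ (as $z=0\le z_\rho$ outside $\Omega$) while $(-\Delta)^s(z-z_\rho)=0$ there, the same truncation argument yields $z\le z_\rho$, whence $\|z\|_{L^\infty}\le b_{N,s}^{-1}\rho^{2s}M=:C_\ast M$. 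One may instead simply invoke the $L^\infty$-theory for the fractional Poisson problem on $C^{1,1}$ domains.

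\emph{Comparison.} Next I would establish $|W_\lambda|\le z$ a.e.\ in $\R^N$, uniformly in $\lambda>0$. For $W_\lambda\le z$, use $\varphi:=(W_\lambda-z)_+$ as a test function: since $W_\lambda,z\in\widetilde H^s(\Omega)$ both vanish outside $\Omega$ and $t\mapsto t_+$ is $1$-Lipschitz, $\varphi\in\widetilde H^s(\Omega)$, $\varphi\ge0$, and $\supp\varphi\subset\Omega$. Subtracting the weak formulation of the equation for $z$ from that of \eqref{eq:lap2-2} and using $u_0\le M$ on $\Omega$, one gets, for a positive dimensional constant $c=c(N,s)$,
\begin{align*}
\mathcal E_s[W_\lambda-z,\varphi]+c\,\lambda^2\langle W_\lambda,\varphi\rangle_{L^2(\R^N)}=c\,\langle u_0-M\mathbf 1_\Omega,\varphi\rangle_{L^2(\R^N)}\le0.
\end{align*}
On $\{\varphi>0\}$ one has $W_\lambda>z\ge0$, hence $\langle W_\lambda,\varphi\rangle_{L^2}\ge0$; and the pointwise inequality $(f(x)-f(y))(f_+(x)-f_+(y))\ge(f_+(x)-f_+(y))^2$, valid for all $x,y$, gives $\mathcal E_s[W_\lambda-z,\varphi]\ge\mathcal E_s[\varphi]\ge0$. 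Therefore $\mathcal E_s[\varphi]=0$, so $\varphi$ is constant, and being in $\widetilde H^s(\Omega)$ it vanishes; that is, $W_\lambda\le z$ a.e. Applying the same to the problem for $-W_\lambda$, which has the same form with datum $-u_0$ and $\|-u_0\|_{L^\infty}=M$, gives $-W_\lambda\le z$ a.e. Hence $|W_\lambda|\le z\le C_\ast M$ a.e.\ in $\R^N$; in particular $W_\lambda\in L^\infty(\R^N)$, and the constant $C_\ast M$ is independent of $\lambda>0$.

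\emph{The main point.} The genuine difficulty is precisely the uniformity in $\lambda$: the two natural shortcuts fail. Discarding $\lambda^2W_\lambda$ and using the maximum principle directly gives only $\|W_\lambda\|_{L^\infty}\le\lambda^{-2}M$, which degenerates as $\lambda\to0^+$, while the Neumann series $W_\lambda=\sum_{k\ge0}(-\lambda^2)^kG^{k+1}u_0$ converges only for small $\lambda$. The barrier argument avoids both pitfalls because on $\{W_\lambda>z\}$ one has $W_\lambda>0$, so the term $c\lambda^2\langle W_\lambda,\varphi\rangle$ is $\ge0$ and \emph{adds to}, rather than subtracts from, the coercive Dirichlet-form term; it is the sign of the zeroth-order coefficient that makes the estimate $\lambda$-free. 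A second, more computational route uses the identity $W_\lambda\equiv U_{\lambda^2}$, with $U_\mu(x)=\int_0^\infty e^{-\mu t}u(x,t)\dd t$ and $u$ the solution of \eqref{eq:fhe-bc}, together with $\|u(\cdot,t)\|_{L^\infty}\le M$ for all $t\ge0$ and the exponential decay $\|u(\cdot,t)\|_{L^\infty}\le Ce^{-\lambda_1t}$ coming from ultracontractivity of the fractional Dirichlet heat semigroup and the spectral gap $\lambda_1>0$, so that $|W_\lambda(x)|\le\int_0^1M\dd t+\int_1^\infty Ce^{-\lambda_1t}\dd t$ is already uniform in $\lambda$; the barrier argument is, however, shorter and requires no heat-kernel bounds.
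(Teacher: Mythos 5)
Your proof is correct and rests on the same strategy as the paper's: construct a $\lambda$-independent barrier from the fractional Poisson problem with constant right-hand side, and exploit the favourable sign of the zeroth-order term $\lambda^2 W_\lambda$ on the contact set so that the comparison is uniform in $\lambda$. The paper chooses the barrier $V = v\,\|u_0\|_{L^\infty}$ directly on a larger ball $B_R(0)\supset\bar\Omega$ and then simply invokes the maximum principle for $(-\Delta)^s+\lambda^2$ applied to $W_\lambda\pm V$, whereas you work entirely at the variational level: you first pose the barrier $z$ on $\Omega$, bound it via Getoor's explicit profile $(\rho^2-|x|^2)_+^s$ on a larger ball, and then carry out the comparison by a Stampacchia truncation test with $\varphi=(W_\lambda-z)_+\in\widetilde H^s(\Omega)$, using the elementary inequality $(a-b)(a_+-b_+)\ge(a_+-b_+)^2$ to show $\mathcal E_s[W_\lambda-z,\varphi]\ge\mathcal E_s[\varphi]$. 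Your version is more self-contained (it does not presuppose a pointwise maximum principle for weak solutions) at the cost of being longer; the intermediate step through $z$ on $\Omega$ is not strictly needed, since one could use the Getoor barrier on $B_\rho(0)$ directly as the paper does, noting that $(W_\lambda-z_\rho)_+$ is already supported in $\Omega$. Your closing remark about the alternative route via $W_\lambda\equiv U_{\lambda^2}$ and decay of the Dirichlet heat semigroup is sound and indeed gives a $\lambda$-free bound, but requires ultracontractivity/spectral-gap inputs the paper deliberately avoids.
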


\begin{proof}
	Let us consider a ball such that $\bar \Omega \subset B_R(0)$ and the problem 
	\begin{align*}
	\begin{cases}
	(-\Delta)^s v(x) = 1, & x \in B_R(0), \\
	v(x) = 0, & x \in \R^N \setminus B_R(0).
	\end{cases}
	\end{align*}
	Then 
	$$|W_\lambda|(x)\le \|u_0\|_{L^\infty(\R^N)} v(x), \quad x \in \Omega.$$
	Indeed, let us consider the function $V=v\| u_0\|_{L^\infty(\R^N)} \in L^\infty(\R^N)$ 
	and set $f_\pm=W_\lambda \pm V$ to obtain
	\begin{align*}
	\begin{cases}
	(-\Delta)^s f_+ + \lambda^2 f_+ \ge 0 \text{ and } (-\Delta)^s f_- + \lambda^2 f_-\le 0, & x \in \Omega, \\
	f_+ \ge 0 \text{ and } f_-\le 0, & x \in \R^N \setminus \Omega.
	\end{cases}
	\end{align*}
	Hence, by the maximum principle, $$-V(x) \le W_\lambda(x) \le V(x), \quad x \in \Omega.$$
	
\end{proof}

\vspace{5mm}
\section*{Acknowledgments}

N.~De Nitti is a member of the Gruppo Nazionale per l'Analisi Matematica, la Probabilit\`a e le loro Applicazioni (GNAMPA) of the Istituto Nazionale di Alta Matematica (INdAM).  He has been supported by the Alexander von Humboldt Foundation and by the TRR-154 project of the Deutsche Forschungsgemeinschaft (DFG, German Research Foundation).  

S. Sakaguchi has been supported by the Grants-in-Aid for Scientific Research (B) ($\#$18H01126 and $\#$17H02847) of Japan Society for the Promotion of Science.


\vspace{5mm}
\bibliographystyle{abbrv} 
\bibliography{FractionalHeatFlow-ref.bib}

\begin{thebibliography}{10}

\bibitem{MR3967804}
N.~Abatangelo and E.~Valdinoci.
\newblock Getting acquainted with the fractional {L}aplacian.
\newblock In {\em Contemporary research in elliptic {PDE}s and related topics},
  volume~33 of {\em Springer INdAM Ser.}, pages 1--105. Springer, Cham, 2019.

\bibitem{MR3987171}
L.~Banjai and E.~Ot\'{a}rola.
\newblock A {PDE} approach to fractional diffusion: a space-fractional wave
  equation.
\newblock {\em Numer. Math.}, 143(1):177--222, 2019.

\bibitem{MR3331523}
B.~Barrios, A.~Figalli, and E.~Valdinoci.
\newblock Bootstrap regularity for integro-differential operators and its
  application to nonlocal minimal surfaces.
\newblock {\em Ann. Sc. Norm. Super. Pisa Cl. Sci. (5)}, 13(3):609--639, 2014.

\bibitem{MR4097647}
U.~Biccari and M.~Warma.
\newblock Null-controllability properties of a fractional wave equation with a
  memory term.
\newblock {\em Evol. Equ. Control Theory}, 9(2):399--430, 2020.

\bibitem{MR2569321}
K.~Bogdan, T.~Byczkowski, T.~Kulczycki, M.~Ryznar, R.~Song, and
  Z.~Vondra\v{c}ek.
\newblock {\em Potential analysis of stable processes and its extensions},
  volume 1980 of {\em Lecture Notes in Mathematics}.
\newblock Springer-Verlag, Berlin, 2009.
\newblock Edited by Piotr Graczyk and Andrzej Stos.

\bibitem{MR3614666}
M.~Bonforte, Y.~Sire, and J.~L. V\'{a}zquez.
\newblock Optimal existence and uniqueness theory for the fractional heat
  equation.
\newblock {\em Nonlinear Anal.}, 153:142--168, 2017.

\bibitem{MR3461641}
C.~Bucur.
\newblock Some observations on the {G}reen function for the ball in the
  fractional {L}aplace framework.
\newblock {\em Commun. Pure Appl. Anal.}, 15(2):657--699, 2016.

\bibitem{BuVa2016}
C.~Bucur and E.~Valdinoci.
\newblock {\em Nonlocal diffusion and applications}, volume~20 of {\em Lecture
  Notes of the Unione Matematica Italiana}.
\newblock Springer, [Cham]; Unione Matematica Italiana, Bologna, 2016.

\bibitem{MR2354493}
L.~Caffarelli and L.~Silvestre.
\newblock An extension problem related to the fractional {L}aplacian.
\newblock {\em Comm. Partial Differential Equations}, 32(7-9):1245--1260, 2007.

\bibitem{MR1094719}
I.~Chavel and L.~Karp.
\newblock Movement of hot spots in {R}iemannian manifolds.
\newblock {\em J. Analyse Math.}, 55:271--286, 1990.

\bibitem{MR4043580}
J.-D. Djida, A.~Fernandez, and I.~Area.
\newblock Well-posedness results for fractional semi-linear wave equations.
\newblock {\em Discrete Contin. Dyn. Syst. Ser. B}, 25(2):569--597, 2020.

\bibitem{MR3318251}
M.~Felsinger, M.~Kassmann, and P.~Voigt.
\newblock The {D}irichlet problem for nonlocal operators.
\newblock {\em Math. Z.}, 279(3-4):779--809, 2015.

\bibitem{MR3462074}
X.~Fern\'{a}ndez-Real and X.~Ros-Oton.
\newblock Boundary regularity for the fractional heat equation.
\newblock {\em Rev. R. Acad. Cienc. Exactas F\'{\i}s. Nat. Ser. A Mat. RACSAM},
  110(1):49--64, 2016.

\bibitem{MR3916700}
N.~Garofalo.
\newblock Fractional thoughts.
\newblock In {\em New developments in the analysis of nonlocal operators},
  volume 723 of {\em Contemp. Math.}, pages 1--135. Amer. Math. Soc.,
  Providence, RI, 2019.

\bibitem{klamkin}
R.~Gulliver, N.~B. Willms, and B.~Kawohl.
\newblock A conjectured heat flow problem {(M. S. Klamkin)}.
\newblock {\em SIAM Review}, 37(1):100--105, 1995.

\bibitem{MR1272155}
S.~Jimbo and S.~Sakaguchi.
\newblock Movement of hot spots over unbounded domains in {${\bf R}^N$}.
\newblock {\em J. Math. Anal. Appl.}, 182(3):810--835, 1994.

\bibitem{MR2098409}
F.~John.
\newblock {\em Plane waves and spherical means applied to partial differential
  equations}.
\newblock Dover Publications, Inc., Mineola, NY, 2004.
\newblock Reprint of the 1955 original.

\bibitem{2105.11324}
P.-Z. Kow, Y.-H. Lin, and J.-N. Wang.
\newblock The {C}alderón problem for the fractional wave equation:
  {U}niqueness and optimal stability.
\newblock {\em ArXiv:2105.11324}, 2021.

\bibitem{MR1490808}
T.~Kulczycki.
\newblock Properties of {G}reen function of symmetric stable processes.
\newblock {\em Probab. Math. Statist.}, 17(2, Acta Univ. Wratislav. No.
  2029):339--364, 1997.

\bibitem{MR3413864}
T.~Kulczycki and M.~Ryznar.
\newblock Gradient estimates of harmonic functions and transition densities for
  {L}\'{e}vy processes.
\newblock {\em Trans. Amer. Math. Soc.}, 368(1):281--318, 2016.

\bibitem{MR847715}
M.~M. Lavrentev, V.~G. Romanov, and S.~P. Shishatskii.
\newblock {\em Ill-posed problems of mathematical physics and analysis},
  volume~64 of {\em Translations of Mathematical Monographs}.
\newblock American Mathematical Society, Providence, RI, 1986.
\newblock Translated from the Russian by J. R. Schulenberger, Translation
  edited by Lev J. Leifman.

\bibitem{MR4221596}
C.~Louis-Rose and M.~Warma.
\newblock Approximate controllability from the exterior of space-time
  fractional wave equations.
\newblock {\em Appl. Math. Optim.}, 83(1):207--250, 2021.

\bibitem{MR1454253}
R.~Magnanini and S.~Sakaguchi.
\newblock The spatial critical points not moving along the heat flow.
\newblock {\em J. Anal. Math.}, 71:237--261, 1997.

\bibitem{MR1718642}
R.~Magnanini and S.~Sakaguchi.
\newblock Corrigendum: ``{S}patial critical points not moving along the heat
  flow. {II}. {T}he centrosymmetric case'' [{M}ath. {Z}. {\bf 230} (1999), no.
  4, 695--712].
\newblock {\em Math. Z.}, 232(2):389, 1999.

\bibitem{MR1686571}
R.~Magnanini and S.~Sakaguchi.
\newblock Spatial critical points not moving along the heat flow. {II}. {T}he
  centrosymmetric case.
\newblock {\em Math. Z.}, 230(4):695--712, 1999.

\bibitem{MR1979777}
R.~Magnanini and S.~Sakaguchi.
\newblock Stationary critical points of the heat flow in the plane.
\newblock {\em J. Anal. Math.}, 88:383--396, 2002.

\bibitem{MuNa2016}
R.~Musina and A.~I. Nazarov.
\newblock On fractional {L}aplacians---2.
\newblock {\em Ann. Inst. H. Poincar\'{e} Anal. Non Lin\'{e}aire},
  33(6):1667--1673, 2016.

\bibitem{MR3903611}
E.~Ot\'{a}rola and A.~J. Salgado.
\newblock Regularity of solutions to space-time fractional wave equations: a
  {PDE} approach.
\newblock {\em Fract. Calc. Appl. Anal.}, 21(5):1262--1293, 2018.

\bibitem{MR1810137}
S.~Sakaguchi.
\newblock Stationary critical points of the heat flow in spaces of constant
  curvature.
\newblock {\em J. London Math. Soc. (2)}, 63(2):400--412, 2001.

\bibitem{MR1929895}
S.~Sakaguchi.
\newblock Behaviors of the spatial critical points and zeros of solutions to
  diffusion equations.
\newblock {\em S\={u}gaku}, 54(3):249--264, 2002.

\bibitem{MR4036719}
S.~Sakata.
\newblock Stationary radial centers and symmetry of convex polytopes.
\newblock {\em Colloq. Math.}, 159(1):91--106, 2020.

\bibitem{MR4124319}
M.~Warma and S.~Zamorano.
\newblock Analysis of the controllability from the exterior of strong damping
  nonlocal wave equations.
\newblock {\em ESAIM Control Optim. Calc. Var.}, 26:Paper No. 42, 34, 2020.

\end{thebibliography}
\vfill 
\end{document}